\numberwithin{equation}{section}
\newtheorem{theorem}{Theorem}[section]
\newtheorem{lemma}[theorem]{Lemma}
\newtheorem{proposition}[theorem]{Proposition}
\theoremstyle{definition}
\newtheorem{remark}[theorem]{Remark}
\newtheorem*{ackno}{Acknowledgements}
\newcommand{\Z}{\mathbb Z}
\newcommand{\R}{\mathbb R}
\newcommand{\Ac}{\mathcal A}
\newcommand{\Mca}{\mathcal M}
\newcommand{\vareps}{\varepsilon}
\newcommand{\nnabla}{\slashed{\nabla}}
\def\({\left(}
\def\){\right)}
\def\<{\left\langle}
\def\>{\right\rangle}
\DeclareMathOperator*{\ima}{Im}
\DeclareMathOperator*{\rea}{Re}
\newcommand{\scal}[1]{\left\langle #1 \right\rangle}
\DeclareMathOperator*{\esssup}{ess\,sup}
\title[Scattering for NLS with combined nonlinearities]
{Scattering for non-radial  3D NLS with combined nonlinearities:  the interaction Morawetz approach}
\author[J. Bellazzini, V. D. Dinh, and L. Forcella
]{Jacopo Bellazzini, Van Duong Dinh, and Luigi Forcella}
\address{Jacopo Bellazzini, Dipartimento di Matematica, Universit\`a Degli Studi di Pisa, Largo Bruno Pontecorvo, 5, 56127, Pisa, Italy}
\email{jacopo.bellazzini@unipi.it}
\address{Van Duong Dinh, Ecole Normale Sup\'erieure de Lyon \& CNRS, UMPA (UMR 5669), France
and 
Department of Mathematics, HCMC University of Education, 280 An Duong Vuong, Ho Chi Minh City, Vietnam}
\email{contact@duongdinh.com}
\address{Luigi Forcella, Dipartimento di Matematica, Universit\`a Degli Studi di Pisa, Largo Bruno Pontecorvo, 5, 56127, Pisa, Italy}
\email{luigi.forcella@unipi.it}
\subjclass[2010]{35Q55; 35B40; 35B44}
\keywords{Nonlinear Schr\"odinger equation, Scattering, Blow-up, Interaction Morawetz estimates}
\begin{document}
	
	\begin{abstract}
	We give  a new proof of the scattering below the ground state energy level for a class of nonlinear    Schr\"odinger equations (NLS) with  mass-energy intercritical  competing nonlinearities. Specifically, the NLS has a focusing leading order nonlinearity with a defocusing perturbation. Our strategy combines interaction Morawetz estimates \`a la Dodson-Murphy and a new crucial bound for the Pohozaev functional of localized functions, which is essential to overcome the lack of a monotonicity condition. Furthermore, we  give the rate of blow-up for symmetric solutions.  
	\end{abstract}

	\maketitle

	\section{Introduction}
	\label{S1}
	\setcounter{equation}{0}
The aim of the  present paper is to study long time dynamics of solutions to the following nonlinear Schr\"odinger equation with competing nonlinearities
\begin{equation}\label{cNLS}
i\partial_{t}u+\Delta u=\lambda_1|u|^{q-1}u+\lambda_2|u|^{p-1}u,
\end{equation}
where  $u(t,x): I \times \mathbb{R}^d \rightarrow \mathbb{C}$, $I\subseteq \mathbb{R}$, and the parameters $\lambda_1,\lambda_2\in\R$. 
Equation \eqref{cNLS} is a nonlinear Schr\"odinger equation which arises in many physical contexts, and we refer the reader to \cite{Fib, KA2003, KAPS, GFTC} for motivations and further discussions on the models. Since the early works by Zhang \cite{Zhang}, Tao, Vi\c san, and Zhang \cite{TVZ}, and Miao, Xu, and Zhao \cite{MXZ}, equations of the type \eqref{cNLS} have attracted a lot of attention leading to a wide literature concerning different problems: local and global theory, scattering, blow-up, stability of standing waves, and so on; see \cite{AIKN-DIE, AIKN-SM, CMZ, Cheng, CZZ, KOPV, FH, JJTV, SJFA, SJDE, CS, LRN, Luo-JFA, Luo-AIHPC}  and references therein. 

\medskip 

In this article, we are primarily interested in the 3D physical case,  and we consider exponents satisfying $\frac73 <q<p<5$, with a positive coefficient $\lambda_1$ and a negative coefficient  $\lambda_2$. Namely, the nonlinearities are defocusing and mass supercritical, and focusing and energy subcritical, respectively. Without loss of generality, we can normalise the coefficients, by scaling and homogeneity,  as $\lambda_1=-\lambda_2=1.$ Therefore, our main focus lies in the Cauchy problem below: 
\begin{equation} \label{NLS}
		\begin{cases}
			i\partial_t u +\Delta u =  |u|^{q-1} u-|u|^{p-1} u\\ 
			u(0,x)= u_0 \in H^1(\R^3),
		\end{cases}
\end{equation}
where $u(t,x): I \times \mathbb{R}^3 \rightarrow \mathbb{C}$, $I\subseteq \mathbb{R}$, with  $\frac{7}{3}<q<p<5$.

\medskip 

At the local level, it is well-known that \eqref{NLS} admits solutions, see \cite{Cazenave}, and by denoting $I_{\max}\ni0$ the maximal interval of existence,  solutions preserve (among other quantities) mass and energy, defined by
\begin{equation}\label{def:mass}
M(u(t)):=\int_{\mathbb R^3}|u(t)|^2\,dx,
\end{equation}
and
\begin{equation}\label{def:en}
E(u(t)):=\frac{1}{2}\int_{\mathbb R^3}|\nabla u(t)|^2\,dx+\frac{1}{q+1}\int_{\mathbb R^3}|u(t)|^{q+1}\,dx-\frac{1}{p+1}\int_{\mathbb R^3}|u(t)|^{p+1}\,dx,
\end{equation}
respectively. Hence \eqref{def:mass} and \eqref{def:en} are independent of time. 

\medskip

The main purpose of this paper is to study the energy scattering for \eqref{NLS} with non-radial initial data which avoids the machinery of the concentration-compactness and rigidity method. Before stating our main contribution, let us briefly recall known results related to the energy scattering for combined NLS with focusing leading nonlinearity. 

\subsection{Known results}
Since the pioneering work by Kenig and Merle \cite{KM} establishing the scattering below the ground state threshold for the energy critical focusing (3D quintic) NLS with radial data, i.e. \eqref{cNLS} with $\lambda_1=0$ and $\lambda_2<0$, the literature on the scattering problem for nonlinear dispersive PDEs rapidly grew up. The Kenig-Merle approach relies on the so-called concentration-compactness and rigidity scheme, based on profile decompositions. The radial assumption on the milestone paper \cite{KM} was removed by Killip and Vi\c san \cite{KV} for dimensions greater than or equal to five and by Dodson \cite{Dodson-ASENS} in dimension four. After the energy critical cases, the mass-critical problems have been considered by several mathematicians. Killip, Tao, and Vi\c san \cite{KTV} proved the scattering below the mass threshold for the 2D cubic NLS with radial data. The higher dimensional cases, still with radial data, was investigated by Killip, Vi\c san, and Zhang \cite{KVZ}. Dodson \cite{Dodson-AM} completely removed the radial requirement and extended these results to all dimensions.

Moving to the mass-supercritical and energy-subcritical nonlinearities (the so-called intercritical case), still by exploiting a Kenig-Merle scheme, the scattering below the ground state threshold was established by Holmer and Roudenko \cite{HR} for the 3D cubic NLS with radial data. Then the radial condition was later removed by Duyckaerts, Holmer, and Roudenko \cite{DHR}. Extensions to higher dimensions, in the whole intercritical range, were done by Akahori and Nawa \cite{AN}, and Fang, Xie, and Cazenave \cite{CXF}. 

\medskip 

The argument used in the previously cited papers follows the Kenig-Merle road map with a concentration-compactness and rigidity scheme. An alternative strategy to prove scattering below the ground state energy in a non-radial framework was recently introduced by Dodson and Murphy \cite{DM-MRL} (see \cite{DM-PAMS} by the same authors for the radial case) for the $\dot{H}^{1/2}$-critical nonlinearity, combining interaction Morawetz estimates together with a scattering criterion. See also \cite{Dinh-DCDS} for a generalisation to intercritical powers in general dimension.

\medskip 

As far as we know, similarly to the single nonlinearity, also in the case of double nonlinearity all the existing results rely on a concentration-compactness and rigidity scheme.  For equations of the type \eqref{cNLS} with a focusing leading term, we mention here 
the papers by Akahori, Ibrahim, Kikuchi, and Nawa \cite{AIKN-SM} with energy critical leading term and a focusing mass-supercritical perturbation in dimensions $d\geq 5$ and the recent extension to $d\geq 3$, given by Luo in \cite{Luo-JFA}. In the case of a focusing mass-critical perturbation, the scattering was recently proved by Luo \cite{Luo-AIHPC} in all dimensions $d\geq 3$. Note that the 3D case requires the radial assumption due to the fact that the non-radial scattering for the 3D quintic NLS is still an open question. For the defocusing mass-supercritical perturbation, an early scattering result was  proved by Miao, Xu, and Zhao \cite{MXZ} for the 3D cubic-quintic NLS with radial data. It was extended to dimensions five and higher in \cite{MXZ-15}. For the defocusing mass-critical perturbation, Cheng, Miao, and Zhao \cite{CMZ} proved the scattering with radial data in dimensions $d\leq 4$. It was recently extended, still with radial data, to dimensions $d\geq 5$ by Luo \cite{Luo-JFA}. When the leading term is mass-supercritical and energy-subcritical  we refer to Cheng \cite{Cheng}  with the defocusing mass-critical perturbation.

\medskip

Concerning the intercritical range on nonlinearities, the energy scattering was proved by Akahora, Kikuchi, and Nawa in \cite{AKN}, and as for the already mentioned papers, the scattering for such equations is proved by means of a Kenig-Merle approach. In fact, in \cite{AKN} a wide class of NLS-type equations is treated. Among them, one also finds the NLS equation with combined nonlinearities with intercritical exponents.  The main novelty in \cite{AKN} is that they replace a usual monotonicity condition (which is in fact not satisfied by combined power nonlinearities) with weaker assumptions. Then, in order to prove a desired lower bound on the Pohozaev functional $G$, see \eqref{Poho-funct} below, Akahori, Kikuchi, and Nawa distinguish  different cases depending on sign of $G$ along minimizing sequences. Specifically, the tool given by  \cite[Lemma 3.3]{AKN} enables them to employ a concentration-compacteness and rigidity scheme. 

\medskip

In our paper, we propose an alternative approach to prove the energy scattering of \cite{AKN} for \eqref{NLS} based on the recent method proposed by Dodson and Murphy, see \cite{DM-MRL}, for the 3D focusing cubic NLS, which relies on suitable interaction Morawetz estimates, and avoids the Kenig-Merle machinery. In the next sections, we explain our new contributions and the main novelties.

\subsection{Main results and novelties}
To state our main results and our original achievements, we begin by  introducing the following Pohozaev functional
\begin{equation}\label{Poho-funct}
	G(u)=\int_{\mathbb R^3}|\nabla u(t)|^2\,dx+\frac{3}{2}\left(\frac{q-1}{q+1}\right)\int_{\mathbb R^3}|u(t)|^{q+1}\,dx-\frac{3}{2}\left(\frac{p-1}{p+1}\right)\int_{\mathbb R^3}|u(t)|^{p+1}\,dx,
\end{equation}
which will be crucial for the characterization of the dynamics of solutions to \eqref{NLS}. We note in particular that solutions to  \eqref{NLS} of the form $u(t,x)=e^{i \omega t}\psi(x)$, with $\omega$ being a real parameter, i.e., standing wave solutions,  fulfill $G(\psi)=0$.

Furthermore, we  introduce the action functional
\[
\begin{aligned}
	S_\omega(\phi) &= E(\phi)+\frac{\omega}{2} M(\phi) \\
	&= \frac{1}{2}\|\nabla \phi\|^2_{L^2(\R^3)} -\frac{1}{p+1}\|\phi\|^{p+1}_{L^{p+1}(\R^3)} + \frac{1}{q+1}\|\phi\|^{q+1}_{L^{q+1}(\R^3)} + \frac{\omega}{2}\|\phi\|^2_{L^2(\R^3)}
\end{aligned}
\]
and  the following ground state energy
\[
m_\omega := \inf \left\{S_\omega (\phi) : \phi \in H^1(\R^3)\backslash \{0\}, \ G(\phi)=0\right\}.
\]
Note that the action functional $S_\omega$ is a conserved quantity along the time evolution of a solution to \eqref{NLS}, as sum of conserved quantities. 

\medskip

The existence of standing waves of the form $u(t,x)=e^{i \omega t}\psi(x)$ can be proved by showing that the aforementioned   ground state energy is achieved, i.e., that the infimum  of the action on the constraint $G=0$  is indeed a minimum.
The fact that ground state energy level $m_\omega$ is achieved for any $\omega>0$ is proved in \cite{AKN}  while the instability properties of the corresponding standing waves is a recent result by \cite{FH}. It is worth mentioning that $m_\omega$ is given by $S_\omega(Q_\omega)$, where $Q_\omega$ solves the elliptic equation 
\[
-\Delta Q_\omega+\omega Q_\omega+Q_\omega^q-Q_\omega^p=0.
\] 
The question concerning the existence of ground states with an assigned mass has been addressed in \cite{BFG}. 
Eventually, for  $\omega>0$, we introduce
\[
\Ac^+_\omega:= \left\{ u\in H^1(\R^3) : S_\omega(u)<m_\omega, \ G(u) \geq 0\right\}
\]
and
\[
\Ac^-_\omega:= \left\{u \in H^1(\R^3): S_\omega(u) <m_\omega, \ G(u) <0\right\},
\]
and we recall the notion of scattering: we say that a solution 
$u(t,x)$ to \eqref{NLS}  \emph{scatters} provided that
\begin{equation*}
	\lim_{t\rightarrow \pm \infty} \|u(t) - e^{it\Delta} u_0^\pm \|_{H^1 (\mathbb{R}^3)}=0,
\end{equation*}
for suitable $u_0^\pm\in H^1(\mathbb{R}^3)$, where $e^{it\Delta}$ is the  linear Schr\"odinger propagator. 

\medskip 

\noindent Our purpose is to show that:
\begin{itemize}
	\item[(1)] if $u_0\in \Ac^+_\omega$, then the corresponding solution exists globally in time and scatters in $H^1(\R^3)$ in both directions;
	\item[(2)] if $u_0 \in \Ac^-_\omega$, then  if we assume $u_0$ is radially symmetric or cylindrically symmetric (with some additional restrictions on $p$ and some relaxations on the hypothesis on $q$), then the corresponding solution blows up in finite time with an explicit blow-up rate. 
\end{itemize}

\medskip 

\noindent The first result concerns the energy scattering for \eqref{NLS} with data in $\Ac^+_\omega$. 

\begin{theorem} \label{theo-scat}
	Let $\frac{7}{3}<q<p<5$ and $\omega>0$. Let $u_0 \in \Ac^+_\omega$. Then the corresponding solution to \eqref{NLS} exists globally in time and scatters.
\end{theorem}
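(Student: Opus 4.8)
The plan is to run the standard ``variational plus Morawetz'' scheme for scattering below the ground state, in the non-radial form introduced by Dodson and Murphy, with the two competing powers handled via the localized Pohozaev estimate advertised in the abstract. \emph{Step 1 (variational analysis and global well-posedness).} I would first note that $G(\phi)=\frac{d}{d\lambda}S_\omega\big(\lambda^{3/2}\phi(\lambda\,\cdot)\big)\big|_{\lambda=1}$, so that $m_\omega$ is a minimax level for the fibering $\lambda\mapsto\phi_\lambda:=\lambda^{3/2}\phi(\lambda\,\cdot)$ attached to the $L^2$-preserving scaling. Setting
\[
H(\phi):=S_\omega(\phi)-\tfrac{2}{3(p-1)}\,G(\phi)=\tfrac{3p-7}{6(p-1)}\|\nabla\phi\|_{L^2}^2+\tfrac{p-q}{(q+1)(p-1)}\|\phi\|_{L^{q+1}}^{q+1}+\tfrac{\omega}{2}\|\phi\|_{L^2}^2 ,
\]
one checks that for $\tfrac73<q<p<5$ all three coefficients are strictly positive, and a routine fibering argument along $\lambda\mapsto\phi_\lambda$ yields $m_\omega=\inf\{H(\phi):\phi\ne0,\ G(\phi)\le0\}$. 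The usual consequences follow: (i) $\Ac^+_\omega$ is invariant under the flow, since $G(u(t_0))=0$ with $u(t_0)\ne0$ would force $S_\omega(u(t_0))\ge m_\omega$, against conservation of $S_\omega$ and $S_\omega(u_0)<m_\omega$; (ii) $\tfrac{3p-7}{6(p-1)}\|\nabla u(t)\|_{L^2}^2\le H(u(t))\le S_\omega(u_0)$, which with mass conservation gives $\sup_{t\in I_{\max}}\|u(t)\|_{H^1}<\infty$ and hence $I_{\max}=\R$ by the blow-up alternative; (iii) a coercivity estimate, uniform along the (sub-threshold) orbit: there is $\delta=\delta(S_\omega(u_0),m_\omega)>0$ with $G(u(t))\ge\delta\,\|\nabla u(t)\|_{L^2}^2$ for all $t$ (more precisely, $G(u(t))$ stays bounded below by a positive coercive functional of $u(t)$). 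This step is essentially bookkeeping once the basic facts about $m_\omega$ and $\Ac^\pm_\omega$ are in place.

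\emph{Step 2 (scattering criterion).} Next I would prove a Tao--Dodson--Murphy criterion: since $u$ is global with $\sup_t\|u(t)\|_{H^1}<\infty$, there are $\varepsilon_0>0$ and $R_0>0$, depending only on that bound, such that
\[
\liminf_{t\to+\infty}\int_{|x|\le R_0}|u(t,x)|^2\,dx\le\varepsilon_0^2 \quad\Longrightarrow\quad u \text{ scatters forward}
\]
(and symmetrically as $t\to-\infty$). This is the Duhamel bootstrap of Tao: starting from a time $t_n$ with small local mass, one expands the evolution from $t_n$, splits the historical Duhamel term into a ``recent'' piece estimated by Strichartz using the local smallness (here $\tfrac73<q<p<5$ makes all the nonlinear terms $H^1$-subcritical, so every Strichartz exponent in play is admissible) and a ``distant'' piece estimated by the dispersive decay $\|e^{it\Delta}f\|_{L^\infty_x}\lesssim|t|^{-3/2}\|f\|_{L^1_x}$ applied to the $H^1$-bounded nonlinearity $|u|^{q-1}u-|u|^{p-1}u$ (note $H^1(\R^3)\hookrightarrow L^{q+1}\cap L^{p+1}$ since $q+1,p+1<6$); a continuity argument then produces a finite global Strichartz norm on $[t_n,\infty)$, and, upgrading one derivative using the energy-subcriticality $p<5$, scattering in $H^1$.

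\emph{Step 3 (interaction Morawetz and the localized Pohozaev bound).} It remains to exhibit such a sequence of times, and this is where the real work lies. I would run a (spatially truncated) interaction Morawetz estimate for \eqref{NLS}, in the non-radial spirit of Dodson--Murphy: its left-hand side controls a spacetime quantity dominating, after H\"older, an averaged local $L^2$-mass of $u$ over $[0,T]$ (in the untruncated, defocusing-model situation it even dominates $\|u\|_{L^4_{t,x}([0,T]\times\R^3)}^4$), while its right-hand side is $T$-independent up to growth in the truncation radius, at worst $o(T)$, in terms of powers of $\sup_t\|u(t)\|_{H^1}$. The nonlinear contribution to the interaction-Morawetz virial is, after localization, a spacetime average of the Pohozaev functional $G$ evaluated on the truncated profiles $\chi(\cdot/R)\,u(t,\cdot)$, and the ``new crucial bound'' of the abstract is exactly a localized coercivity inequality showing that the negative part of $G\big(\chi(\cdot/R)\,u(t)\big)$ is absorbed by error terms supported in the annulus $\{|x|\sim R\}$, themselves controlled via Step 1(iii) and negligible after integration and averaging. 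Dividing by $T$ and letting $T\to+\infty$ then forces $\liminf_{t\to+\infty}\int_{|x|\le R_0}|u(t,x)|^2\,dx=0$ — e.g.\ $u\in L^4_{t,x}$ gives $t_n\to\infty$ with $\|u(t_n)\|_{L^4_x}\to0$, so $\int_{|x|\le R_0}|u(t_n)|^2\lesssim R_0^{3/2}\|u(t_n)\|_{L^4_x}^2\to0$ — and Step 2 concludes.

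The step I expect to be the main obstacle is precisely the localized Pohozaev bound of Step 3. Because the defocusing $|u|^{q-1}u$ and focusing $|u|^{p-1}u$ nonlinearities compete, truncating $G$ to a ball generates cross terms between the two powers, and a crude localization of the mass-supercritical (hence ``large'') defocusing contribution can be of the same order as — or worse than — the focusing contribution one is trying to dominate; reconciling this with the coercivity of Step 1, which is a statement about the \emph{global} functional, is the crux. It is also where the threshold $q>\tfrac73$ enters decisively: it is exactly what makes the coefficients of $H$ positive and gives $G$ its favourable behaviour under $L^2$-scaling, together with whatever additional exponent restrictions the interaction-Morawetz bookkeeping requires.
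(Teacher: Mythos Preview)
Your three-step architecture is right, but the specifics you supply in Steps~2 and~3 are those of the \emph{radial} Dodson--Murphy scheme, and they do not close in the non-radial setting that is the point of the theorem. Your Step~2 criterion, $\liminf_{t\to+\infty}\int_{|x|\le R_0}|u(t)|^2\,dx\le\varepsilon_0^2\Rightarrow$ scattering, relies on radiality in its proof (small mass in a ball plus radial Sobolev outside the ball forces the nonlinear Duhamel piece small); the paper's criterion (Lemma~\ref{lem-scat-crite}) is instead smallness of a \emph{spacetime} Strichartz norm on a well-placed time window. More seriously, in Step~3 you propose coercivity for $G\big(\chi(\cdot/R)\,u(t)\big)$, i.e., localization at the origin with no Galilean modulation. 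The non-radial interaction Morawetz produces, after exploiting Galilean invariance, the quantity $\iiint|\chi_R(y-z)u(t,y)|^2\,|\chi_R(x-z)\nabla u^\xi(t,x)|^2\,dx\,dy\,dz$ with $u^\xi(t,x)=e^{ix\cdot\xi}u(t,x)$ and $\xi=\xi(t,z,R)$ chosen to annihilate the localized momentum; the coercivity one actually needs (Lemma~\ref{lem-A-omega-plus}) is therefore
\[
G\big(\chi_R(\cdot-z)\,u^\xi(t)\big)\ \ge\ \delta\,\big\|\nabla\big(\chi_R(\cdot-z)\,u^\xi(t)\big)\big\|_{L^2}^2
\]
uniformly in the center $z\in\R^3$ and in $t$, for all $R$ large. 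Both the arbitrary center and the boost are essential; without them the argument collapses to the radial one. The output of the interaction Morawetz is then not a global $L^4_{t,x}$ bound but the averaged estimate of Proposition~\ref{prop-inter-mora-est}, which must be post-processed (mean value in $R$, pigeonhole over a lattice in $z$, pigeonhole over subintervals in $t$, Gagliardo--Nirenberg, and an interpolation) to feed Lemma~\ref{lem-scat-crite}.

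Your diagnosis of the obstruction in the localized Pohozaev bound is also slightly off. The issue is not ``cross terms between the two powers'' but that the refined Gagliardo--Nirenberg inequality $\|f\|_{L^4}^4\lesssim\|f\|_{L^2}\|\nabla f\|_{L^2}\|\nabla f^\xi\|_{L^2}^2$, which is how Dodson--Murphy obtain coercivity for a single homogeneous power, is unavailable for a non-homogeneous nonlinearity. The paper circumvents it variationally: one first shows $I_\omega\big(\chi_R(\cdot-z)u^\xi(t)\big)\le I_\omega(u(t))+O(R^{-2})<m_\omega$ (here the momentum-killing choice of $\xi$ is exactly what makes the localized kinetic energy no larger than the unlocalized one), whence $G>0$ on the localized, boosted profile by the equivalent characterization $m_\omega=\tilde m_\omega$, and then a fibering computation upgrades $G>0$ to $G\ge\delta\|\nabla(\cdot)\|_{L^2}^2$. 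Your auxiliary functional $H=S_\omega-\tfrac{2}{3(p-1)}G$ in Step~1 is a valid alternative to the paper's $I_\omega=S_\omega-\tfrac{2}{3(q-1)}G$ for this purpose, though note that positivity of the gradient coefficient in your $H$ only needs $p>\tfrac73$; the restriction $q>\tfrac73$ in the theorem is actually used elsewhere, in the Strichartz bookkeeping of Section~\ref{S3}.
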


\subsubsection{Comments on Theorem \ref{theo-scat}}  In order to prove Theorem \ref{theo-scat}, we combine a scattering criterion jointly with interaction Morawetz estimates  in the spirit of \cite{DM-MRL}, and our new key ingredient is a coercivity property established in Subsection \ref{subsec:coer} for solutions belonging to $\Ac^+_\omega$. Specifically, we provide here the new crucial bound \eqref{intro:loc-poho} on the Pohoazev functional for localized solutions, which enables us to build upon \cite{DM-MRL}. In order to get this property, we perform a careful variational study of the functional $G$ that we believe it is of independent interest.  More precisely, we use the variational properties of the functional $G$, that  appears in the virial-like estimates, to prove an interaction Morawetz estimate. 
We show that taken a suitable cut-off function $\chi_R(x)=\chi(\frac{x}{R})$ such that $\chi_R= 1$ when $|x|\leq (1-\eta)R$ with some $\eta>0$ small, the Pohozaev functional fulfills, for any $R$ sufficiently large,
\begin{equation}\label{intro:loc-poho}
	G(\chi_R(\cdot-z) u^\xi(t, \cdot)) \geq \delta \|\nabla (\chi_R(\cdot-z) u^\xi(t, \cdot))\|^2_{L^2(\mathbb R^3)}
\end{equation}
for any space shift $z\in \R^3$, all time $t\in \R$, and some $\xi =\xi(t,z,R) \in \R^3$, where $u^\xi(t,x)$ stands for the modulated function $e^{ix\cdot \xi}u(t,x)$. Here $\delta>0$ is a constant independent of time and the translation vector.

\begin{remark}\rm The localized coercivity  result, i.e., that the coercivity is true if we localize the modulated and translated solution $u(t)$ in a sufficiently  large ball centered anywhere,  as far as we know is a new  property which may  be relevant in other contexts. Indeed, \eqref{intro:loc-poho} may be used in contexts where a concentration-compactness scheme is unknown to be exploitable. We refer the reader to the very recent work by Luo, \cite{Luo-arXiv}, where our estimate \eqref{intro:loc-poho} is crucially used for the scattering of solutions to intercritical NLS posed on high-dimensional waveguides. Hence, we believe that our strategy is not only a mere alternative proof of the results of \cite{AKN} for \eqref{NLS}.

\end{remark}
\begin{remark}\rm
We shall emphasize  that in the paper by Dodson-Murphy, the  coercivity property needed to prove interaction Morawetz estimates comes from the refined Gagliardo-Nirenberg inequality (see \cite[Lemma 2.1]{DM-MRL})
\begin{equation}\label{refined-GN}
	\|f\|_{L^4(\R^3)}^4\leq C_{GN}\|f\|_{L^2(\R^3)}\|\nabla f\|_{L^2(\R^3)}\|\nabla f^\xi\|_{L^2(\R^3)}^2
\end{equation}
which holds for any $f\in H^1(\R^3)$ and any $\xi\in\R^3$, and that we are prevented to use in our setting. Indeed, due to the non-homogeneity of nonlinearities,  inequality \eqref{refined-GN} is not applicable in our framework. Therefore, our proof is not a straightforward adaptation of \cite{DM-MRL} as we need to bypass the difficulties related to the lack of scaling due to the presence of two competing nonlinearities.
\end{remark}

\begin{remark}\rm
One of the motivation to study the energy scattering for \eqref{NLS} came to complete the picture on the dynamics of solutions to \eqref{NLS} after a recent work by the first and third authors with Georgiev, see \cite{BFG}, where existence of normalized ground states for \eqref{NLS} and blow-up results were established in the 3D space. We keep the presentation for the physically relevant 3D case as a counterpart of the results we established in \cite{BFG}.
It is worth mentioning that the scattering results of Theorem \ref{theo-scat} still holds in arbitrary space dimension, as \eqref{intro:loc-poho} can be established in the same way in any dimension $d\geq1$, for any intercritical powers $\frac4d+1<q<p<1+\frac{4}{(d-2)^+}$, with $(d-2)^+:=\max\{0,d-2\}$. 
\end{remark}

\medskip

Our second  result concerns the blow-up rate for finite time blowing-up solutions to \eqref{NLS} with initial data in $\Ac^-_\omega$. 
In \cite{BFG}, we proved that if  $\frac{7}{3}<p<5$,  $1<q<p$, and $\omega>0$,  for $u_0 \in \Ac^-_\omega$, if one of the following conditions is satisfied:
\begin{itemize}
	\item $u_0 \in \Sigma(\R^3):=H^1(\R^3)\cap L^2(\mathbb R^3,|x|^2 dx)$,
	\item $u_0 \in H^1(\R^3)$ is radial,
	\item $u_0 \in \Sigma_3(\R^3)$ and $p \leq 3$, where
	\[
	\Sigma_3(\R^3):= \left\{f\in H^1(\R^3) : f(x)=f(\overline{x}, x_3) = f(|\overline{x}|, x_3), \ f \in L^2(\R^3, x_3^2dx) \right\}, \quad \overline{x}=(x_1,x_2),
	\]
\end{itemize}
then the corresponding solution to \eqref{NLS} blows up in finite time.

\medskip

Here we show that we have the following blow-up rate for \eqref{NLS}.

\begin{proposition} \label{prop-blow-rate}
	Let $\frac73<p<5$, $1<q<p$, $\omega>0$, and $u_0 \in \Ac^-_\omega$.   \medskip
	
	\noindent \text{(i)}	Assume that  one of the following conditions is fulfilled:
	\begin{itemize}
		\item[\textit{(1)}] $u_0 \in H^1(\R^3)$ is radial,
		\item[\textit{(2)}] $u_0 \in \Sigma_3(\R^3)$ and $p<3$.
	\end{itemize}
	Then the corresponding solution to \eqref{NLS} blows up in finite time, i.e., $T^*<+\infty$, and  for $t$ close to $T^*$,
	\begin{align} \label{est-blow}
		\int_t^{T^*} (T^*-\tau) \|\nabla u(\tau)\|^2_{L^2(\R^3)}d\tau  \leq \left\{
		\begin{array}{ccl}
			C (T^*-t)^{\frac{2(5-p)}{p+3}} &\text{if}& \textit{(1)} \text{ holds}, \\
			C (T^*-t)^{\frac{4(3-p)}{5-p}} &\text{if}& \textit{(2)} \text{ holds}.
		\end{array}
		\right.
	\end{align}
	In addition, there exists a time sequence $t_n \nearrow T^*$ such that
	\begin{align} \label{blow-rate}
		\|\nabla u(t_n)\|_{L^2(\R^3)} \leq \left\{
		\begin{array}{ccl}
			C(T^*-t_n)^{-\frac{2(p-1)}{p+3}} &\text{if} &  \textit{(1)} \text{ holds}, \\
			C(T^*-t_n)^{-\frac{p-1}{5-p}} &\text{if} &  \textit{(2)}\text{ holds}. 
		\end{array}
		\right.
	\end{align}
	\medskip
	\text{(ii)} If  we do not assume any symmetry on the solution (nor any restriction on the exponents), then either $T^*<\infty$ or $u$ grows up in infinite time, namely $T^*=\infty$ and $\limsup_{t\to\infty}\|\nabla u(t)\|_{L^2(\R^3)}=\infty$. 
\end{proposition}

The Proposition above is based on the results we obtained in \cite{BFG}, jointly with a Merle, Rapha\"el, and Szeftel argument  \cite{MRS}. In particular, the new results  are the blow-up rate estimates \eqref{est-blow} and \eqref{blow-rate} of point $(i)$, and point $(ii)$. The finite time blow-up for symmetric solutions is established in \cite{BFG}, and we include it in the statement for sake of completeness. The grow-up result follows by means of a localized properties of the mass, jointly with a contradiction argument as in \cite{DWZ}.

\subsection*{Notations} In the rest of the paper, we will use the notations below. \medskip

\noindent Given two quantities $A$ and $B,$ we denote $A \lesssim B$ if there exists a positive constant $C$ such that $A \leq CB$. 
For $1\leq p\leq \infty,$ the $L^p=L^p(\Omega; \mathbb C)$ are the classical Lebesgue spaces endowed with norm $\|f\|_{L^p(\Omega)}=\left(\int_{\Omega}|f(x)|^p\,dx\right)^{1/p}$ if $p\neq\infty$ or $\|f\|_{L^\infty(\Omega)}=\esssup_{x\in \Omega}|f(x)|$ for $p=\infty.$ We denote by $H^1=H^1(\mathbb R^3;\mathbb C)$ the usual Sobolev space of $L^2$ functions with gradient in $L^2$. Here $\langle \cdot \rangle$ stands for the Japanese brackets $\langle  \cdot \rangle=(1+|\cdot|^2)^{1/2}$.  
Given an interval $I\subseteq \mathbb R,$ bounded or unbounded, we define by $L^p_tX_x=L^p_t(I,X_x)$ the Bochner space of vector-valued functions $f:I\to X$ endowed with the norm $\left(\int_{I}\|f(s)\|_{X}^p\,ds\right)^{1/p}$ for $1\leq p<\infty,$ with similar modification as above for $p=\infty.$ (In what follows, $f\in L^p_tX_x$ means that $f=f(t,x)$ is a function depending on the time variable $t\in I\subseteq \R$ and the space variable $x\in\R^3,$ with finite $L^p_tX_x$-norm). For any $p\in[1,\infty]$, $p^\prime$ denotes its dual defined by $p^\prime=\frac{p}{p-1}$. As we work in the 3D case, we omit the $\R^3$ notation when no confusion may arise.
 	
	\section{Variational analysis}
	\label{S2}
	\setcounter{equation}{0}
In this section, we recall and prove some crucial variational tools used along the paper, used in particular to define  the scattering/blow-up dichotomy regions of initial data for the Cauchy problem \eqref{NLS}. Furthermore, some of the results illustrated below will be also essential in proving a new coercivity property that we need to prove scattering by means of suitable interaction Morawetz  estimates.  \\

	Let $\omega>0$. We consider the minimization problem
	\begin{align*}
	m_\omega := \inf \left\{S_\omega (\phi) : \phi \in H^1\backslash \{0\}, \ G(\phi)=0\right\},
	\end{align*}
	where 
	\[
	S_\omega(\phi) = E(\phi)+\frac{\omega}{2} M(\phi) = \frac{1}{2}\|\nabla \phi\|^2_{L^2} -\frac{1}{p+1}\|\phi\|^{p+1}_{L^{p+1}} + \frac{1}{q+1}\|\phi\|^{q+1}_{L^{q+1}} + \frac{\omega}{2}\|\phi\|^2_{L^2}
	\]
	is the action functional and
	\[
	G(\phi) = \|\nabla \phi\|^2_{L^2} -\frac{3(p-1)}{2(p+1)}\|\phi\|^{p+1}_{L^{p+1}} +\frac{3(q-1)}{2(q+1)}\|\phi\|^{q+1}_{L^{q+1}}
	\]
	is the Pohozaev functional.
	
	\begin{proposition} \label{prop-mini-m-omega}
		Let $\frac{7}{3}<p<5$, $1<q<p$, and $\omega>0$. Then $m_\omega>0$ and there exists at least a minimizer for $m_\omega$. 
	\end{proposition}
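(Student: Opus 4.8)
The plan is to base everything on the algebraic identity
\[
S_\omega(\phi) = \frac{2}{3(p-1)}\,G(\phi) + \frac{3p-7}{6(p-1)}\,\|\nabla\phi\|_{L^2}^2 + \frac{p-q}{(p-1)(q+1)}\,\|\phi\|_{L^{q+1}}^{q+1} + \frac{\omega}{2}\,\|\phi\|_{L^2}^2,
\]
valid for every $\phi\in H^1$, whose three coefficients after the $G$-term are all strictly positive thanks to $p>\tfrac73$ and $q<p$. I denote by $H(\phi)$ the sum of those last three (nonnegative) terms; then $S_\omega=H$ on $\{G=0\}$, so $m_\omega=\inf\{H(\phi):\phi\in H^1\setminus\{0\},\ G(\phi)=0\}$, and $H$ is coercive on $H^1$ and weakly lower semicontinuous. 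Positivity of $m_\omega$ then comes out directly: if $G(\phi)=0$ with $\phi\ne 0$, then $\|\nabla\phi\|_{L^2}^2\le\frac{3(p-1)}{2(p+1)}\|\phi\|_{L^{p+1}}^{p+1}$, and inserting the Gagliardo--Nirenberg bound $\|\phi\|_{L^{p+1}}^{p+1}\lesssim\|\nabla\phi\|_{L^2}^{3(p-1)/2}\|\phi\|_{L^2}^{(5-p)/2}$ together with $\frac{3(p-1)}{2}>2$ rearranges to the two-variable lower bound $\|\nabla\phi\|_{L^2}^{3p-7}\|\phi\|_{L^2}^{5-p}\gtrsim 1$ on the constraint. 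Since $(3p-7)+(5-p)=2(p-1)$, a weighted arithmetic--geometric inequality applied to $H(\phi)\ge\frac{3p-7}{6(p-1)}\|\nabla\phi\|_{L^2}^2+\frac{\omega}{2}\|\phi\|_{L^2}^2$ produces a positive uniform lower bound for $H$ on $\{G=0\}$, hence $m_\omega>0$.

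Next I would establish a scaling lemma. For $\phi\ne 0$ set $\phi_\lambda(x)=\lambda^{3/2}\phi(\lambda x)$, which keeps $\|\phi_\lambda\|_{L^2}=\|\phi\|_{L^2}$; a direct computation gives $\frac{d}{d\lambda}S_\omega(\phi_\lambda)=\frac{1}{\lambda}G(\phi_\lambda)$, and inspecting $\lambda\mapsto G(\phi_\lambda)/\lambda^2$ (for which one separates the cases $q\gtrless\tfrac73$ according to the sign of $\frac{3q-7}{2}$) shows that $\lambda\mapsto G(\phi_\lambda)$ has a unique zero $\lambda_0(\phi)>0$, is positive on $(0,\lambda_0(\phi))$ and negative on $(\lambda_0(\phi),\infty)$; consequently $\lambda_0(\phi)$ is the unique maximiser of $\lambda\mapsto S_\omega(\phi_\lambda)$ and $G(\phi_{\lambda_0(\phi)})=0$. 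In particular, if $G(\phi)\le 0$ then $\lambda_0(\phi)\le 1$, and since $H(\phi_\lambda)$ is nondecreasing in $\lambda$ we get $H(\phi_{\lambda_0(\phi)})\le H(\phi)$ while $H(\phi_{\lambda_0(\phi)})\ge m_\omega$; this yields the relaxed characterisation
\[
m_\omega = \inf\big\{H(\phi):\phi\in H^1\setminus\{0\},\ G(\phi)\le 0\big\}.
\]

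Now I would take a minimizing sequence $(\phi_n)$ for this relaxed problem, which is bounded in $H^1$ by coercivity of $H$, and replace each $\phi_n$ by the symmetric-decreasing rearrangement of $|\phi_n|$ (this preserves every $L^r$-norm and does not increase $\|\nabla\cdot\|_{L^2}$, hence does not increase $H$ nor $G$), so that I may assume $\phi_n$ radial and nonnegative. Up to a subsequence $\phi_n\rightharpoonup\psi$ in $H^1$, and the compact embedding $H^1_{\mathrm{rad}}(\R^3)\hookrightarrow L^r(\R^3)$ for $2<r<6$ gives $\phi_n\to\psi$ strongly in $L^{p+1}$ and in $L^{q+1}$. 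The limit is nonzero: if $\psi=0$ then $\|\phi_n\|_{L^{p+1}}\to 0$, whereas $G(\phi_n)\le 0$ combined with Gagliardo--Nirenberg (using that $\|\phi_n\|_{L^2}$ is bounded) forces $\|\nabla\phi_n\|_{L^2}\ge c>0$, in contradiction with $\|\nabla\phi_n\|_{L^2}^2\le\frac{3(p-1)}{2(p+1)}\|\phi_n\|_{L^{p+1}}^{p+1}\to 0$. By weak lower semicontinuity of $\|\nabla\cdot\|_{L^2}^2$ and $\|\cdot\|_{L^2}^2$ and strong convergence of the two subcritical Lebesgue norms we obtain $G(\psi)\le\liminf_n G(\phi_n)\le 0$ and $H(\psi)\le\liminf_n H(\phi_n)=m_\omega$; since $\psi\ne 0$ with $G(\psi)\le 0$ is admissible for the relaxed problem, $H(\psi)\ge m_\omega$, so $H(\psi)=m_\omega$. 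It remains to see $G(\psi)=0$: if instead $G(\psi)<0$ then $\mu:=\lambda_0(\psi)<1$, and because $\psi\ne 0$,
\[
H(\psi_\mu)=\frac{3p-7}{6(p-1)}\,\mu^2\|\nabla\psi\|_{L^2}^2+\frac{p-q}{(p-1)(q+1)}\,\mu^{3(q-1)/2}\|\psi\|_{L^{q+1}}^{q+1}+\frac{\omega}{2}\|\psi\|_{L^2}^2<H(\psi)=m_\omega,
\]
while $G(\psi_\mu)=0$ makes $\psi_\mu$ admissible, a contradiction. Hence $G(\psi)=0$, so $S_\omega(\psi)=H(\psi)=m_\omega$, and $\psi$ is a minimizer for $m_\omega$.

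The steps I expect to be most delicate are the two at the ends. First, extracting from the single scalar equation $G(\phi)=0$ the genuinely two-variable bound $\|\nabla\phi\|_{L^2}^{3p-7}\|\phi\|_{L^2}^{5-p}\gtrsim 1$ that drives $m_\omega>0$: because two inhomogeneous powers compete, one cannot reduce to a single scale-invariant Gagliardo--Nirenberg quotient and one must keep both $\|\nabla\phi\|_{L^2}$ and $\|\phi\|_{L^2}$ in play. Second, promoting $G(\psi)\le 0$ to $G(\psi)=0$ for the weak limit; the loss of strong $\dot H^1$-convergence is exactly compensated by the scaling lemma and by having relaxed the constraint from $\{G=0\}$ to $\{G\le 0\}$, which is what makes the variational scheme close.
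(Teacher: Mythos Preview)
Your proof is correct and follows essentially the same scheme as the paper's: introduce an auxiliary non-negative functional equal to $S_\omega$ on $\{G=0\}$, relax the constraint to $\{G\le 0\}$, symmetrize a minimizing sequence and pass to a radial weak limit via compact embedding, then rescale back to $\{G=0\}$. The only difference is the choice of decomposition---you subtract $\tfrac{2}{3(p-1)}G$ (eliminating the $L^{p+1}$ term) whereas the paper subtracts $\tfrac{2}{3(q-1)}G$ (eliminating the $L^{q+1}$ term); your $H$ is manifestly non-negative for the full stated range $1<q<p$, while the paper's $I_\omega$ needs $q>\tfrac73$ for the gradient coefficient to be positive.
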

	
	Before giving the proof of Proposition \ref{prop-mini-m-omega}, let us start with the following observation. 
	\begin{lemma} \label{lem-lamb-0}
		Let $\phi \in H^1\backslash\{0\}$. Then there exists a unique $\lambda_0>0$ such that
		\[
		G(\phi_\lambda)~ \left\{ 
		\begin{array}{cl}
		>0 &\text{if } 0<\lambda<\lambda_0, \\
		=0 &\text{if } \lambda=\lambda_0, \\
		<0 &\text{if } \lambda>\lambda_0,
		\end{array}
		\right.
		\]
		where
		\begin{align}\label{scaling}
		\phi_\lambda(x):= \lambda^{\frac{3}{2}} \phi(\lambda x), \quad \lambda>0.
		\end{align}
	\end{lemma}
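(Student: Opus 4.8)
The plan is to compute $G(\phi_\lambda)$ explicitly as a function of $\lambda$ using the scaling \eqref{scaling} and then analyze the resulting one-variable function. First I would record the effect of the scaling on each of the three terms. Since $\phi_\lambda(x) = \lambda^{3/2}\phi(\lambda x)$, a change of variables gives $\|\nabla \phi_\lambda\|_{L^2}^2 = \lambda^2 \|\nabla \phi\|_{L^2}^2$, and for the Lebesgue norms $\|\phi_\lambda\|_{L^{r+1}}^{r+1} = \lambda^{\frac{3(r+1)}{2}} \lambda^{-3} \|\phi\|_{L^{r+1}}^{r+1} = \lambda^{\frac{3(r-1)}{2}}\|\phi\|_{L^{r+1}}^{r+1}$ for $r \in \{p,q\}$. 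Therefore
\[
G(\phi_\lambda) = \lambda^2 \|\nabla\phi\|_{L^2}^2 + \frac{3(q-1)}{2(q+1)}\lambda^{\frac{3(q-1)}{2}}\|\phi\|_{L^{q+1}}^{q+1} - \frac{3(p-1)}{2(p+1)}\lambda^{\frac{3(p-1)}{2}}\|\phi\|_{L^{p+1}}^{p+1}.
\]
Writing $a = \|\nabla\phi\|_{L^2}^2 > 0$, $b = \frac{3(q-1)}{2(q+1)}\|\phi\|_{L^{q+1}}^{q+1} > 0$, $c = \frac{3(p-1)}{2(p+1)}\|\phi\|_{L^{p+1}}^{p+1} > 0$ (all strictly positive since $\phi \neq 0$), and setting $\alpha = \frac{3(q-1)}{2}$, $\beta = \frac{3(p-1)}{2}$, we have $G(\phi_\lambda) = a\lambda^2 + b\lambda^\alpha - c\lambda^\beta$.

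Next I would exploit the ordering of the exponents coming from the hypothesis $\frac{7}{3} < q < p < 5$. This gives $\alpha = \frac{3(q-1)}{2} > \frac{3 \cdot \frac{4}{3}}{2} = 2$ and $\beta = \frac{3(p-1)}{2} < \frac{3 \cdot 4}{2} = 6$; more importantly $2 < \alpha < \beta$ because $q > \frac{7}{3}$ forces $\alpha > 2$ and $q < p$ forces $\alpha < \beta$. Dividing by $\lambda^2 > 0$, the sign of $G(\phi_\lambda)$ is the sign of
\[
h(\lambda) := a + b\lambda^{\alpha - 2} - c\lambda^{\beta - 2},
\]
with $0 < \alpha - 2 < \beta - 2$. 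Now $h(0^+) = a > 0$, $h$ is continuous on $(0,\infty)$, and $h(\lambda) \to -\infty$ as $\lambda \to \infty$ since $\beta - 2 > \alpha - 2 > 0$ and $c > 0$. So $h$ has at least one zero. For uniqueness I would show $h$ is strictly decreasing past its first zero, or more cleanly: compute $h'(\lambda) = b(\alpha-2)\lambda^{\alpha-3} - c(\beta-2)\lambda^{\beta-3} = \lambda^{\alpha-3}\big(b(\alpha-2) - c(\beta-2)\lambda^{\beta-\alpha}\big)$, so $h'$ vanishes at exactly one point $\lambda_* > 0$, is positive for $\lambda < \lambda_*$ and negative for $\lambda > \lambda_*$. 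Hence $h$ is unimodal: strictly increasing then strictly decreasing. Combined with $h(0^+) = a > 0$ and $h(+\infty) = -\infty$, this forces $h$ to have a unique zero $\lambda_0 \in (\lambda_*, \infty)$, with $h > 0$ on $(0,\lambda_0)$ and $h < 0$ on $(\lambda_0, \infty)$. Translating back, $G(\phi_\lambda) > 0$ for $0 < \lambda < \lambda_0$, $G(\phi_{\lambda_0}) = 0$, and $G(\phi_\lambda) < 0$ for $\lambda > \lambda_0$, which is exactly the claim.

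I do not expect any serious obstacle here: the only point requiring a little care is verifying the exponent inequalities $2 < \alpha < \beta$ from $\frac{7}{3} < q < p$, and then the elementary calculus showing $h$ is first increasing then decreasing (so that its graph crosses zero exactly once after starting positive). The strict positivity of $a$, $b$, $c$ uses $\phi \neq 0$ together with $q, p > 1$ (so the coefficients $\frac{3(q-1)}{2(q+1)}$, $\frac{3(p-1)}{2(p+1)}$ are positive); no compactness or PDE input is needed, as this is a purely variational-scaling computation.
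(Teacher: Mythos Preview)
Your proof is correct and follows essentially the same approach as the paper: both factor $G(\phi_\lambda)=\lambda^2 h(\lambda)$ (the paper writes $\lambda^2 f(\lambda)$) and analyze the one-variable function, showing it is unimodal (first increasing, then decreasing) with $h(0^+)>0$ and $h(+\infty)=-\infty$, hence has a unique zero. Your write-up is in fact slightly more explicit than the paper's, which dispatches the final step with ``Drawing the graph of $f$''; otherwise the arguments coincide.
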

	
	\begin{proof}
		We have
		\begin{align*}
		G(\phi_\lambda) &= \lambda^2\|\nabla \phi\|^2_{L^2} -\frac{3(p-1)}{2(p+1)} \lambda^{\frac{3}{2}(p-1)} \|\phi\|^{p+1}_{L^{p+1}} + \frac{3(q-1)}{2(q+1)}\lambda^{\frac{3}{2}(q-1)} \|\phi\|^{q+1}_{L^{q+1}} \\
		&=\lambda^2 \(\|\nabla \phi\|^2_{L^2} -\frac{3(p-1)}{2(p+1)} \lambda^{\frac{3}{2}(p-1)-2} \|\phi\|^{p+1}_{L^{p+1}} + \frac{3(q-1)}{2(q+1)}\lambda^{\frac{3}{2}(q-1)-2} \|\phi\|^{q+1}_{L^{q+1}}\) \\
		&=:\lambda^2 f(\lambda).
		\end{align*}
		Consider $f(\lambda)=a - b\lambda^\alpha + c\lambda^{\beta}$ with $a,b,c>0$, $\alpha>0$, and $\beta<\alpha$. We have
		\[
		f'(\lambda) = \lambda^{\beta-1} (c\beta - b\alpha \lambda^{\alpha-\beta}).
		\]
		In particular, there exists a unique $\lambda_1>0$ such that 
		\[
		f(\lambda_1)= \max_{\lambda>0} f(\lambda). 
		\]
		Drawing the graph of $f$, we see that there exists a unique $\lambda_0>0$ such that $f(\lambda_0)=0$. Moreover, $f(\lambda)>0$ for $0<\lambda<\lambda_0$ and $f(\lambda)<0$ for $\lambda>\lambda_0$. This shows the lemma.
	\end{proof}
	
	\begin{proof}[Proof of Proposition \ref{prop-mini-m-omega}] We proceed in several steps, illuminated by the works of Ibrahim, Masmoudi, and Nakanishi \cite{IMN}, and Akahori, Ibrahim, Kikuchi, and Nawa \cite{AIKN-DIE}. \\
		
\noindent		{\bf Step 1. An auxiliary minimization problem.} Denote
		\[
		I_\omega(\phi):= S_\omega(\phi)- \frac{2}{3(q-1)} G(\phi) = \frac{3q-7}{6(q-1)} \|\nabla \phi\|^2_{L^2} + \frac{p-q}{(p+1)(q-1)}\|\phi\|^{p+1}_{L^{p+1}} +\frac{\omega}{2} \|\phi\|^2_{L^2}
		\]
		and consider
		\begin{align} \label{m-omega-tilde}
		\tilde{m}_\omega:= \inf \left\{I_\omega(\phi) : \phi \in H^1\backslash \{0\},\ G(\phi)\leq 0\right\}.
		\end{align}
		We claim that $m_\omega=\tilde{m}_\omega>0$. In fact, it is clear that $\tilde{m}_\omega \leq m_\omega$. Now let $\phi \in H^1\backslash \{0\}$ be such that $G(\phi)\leq 0$. By Lemma \ref{lem-lamb-0}, there exists $\lambda_0 \in (0,1]$ such that $G(\phi_{\lambda_0})=0$. Thus
		\[
		m_\omega \leq S_\omega(\phi_{\lambda_0}) = I_\omega(\phi_{\lambda_0}) \leq I_\omega(\phi),
		\]
		where we have used $\lambda_0\leq 1$ in the last inequality. Taking the infimum, we get $m_\omega \leq \tilde{m}_\omega$, hence $m_\omega=\tilde{m}_\omega$. 
		
		To see that $\tilde{m}_\omega>0$, we take a minimizing sequence $\{\phi_n\}_n$ for $\tilde{m}_\omega$, i.e., $\phi_n \in H^1\backslash \{0\}$, $G(\phi_n) \leq 0$, and $I_\omega(\phi_n) \to \tilde{m}_\omega$. As $I_\omega$ is non-negative, there exists $C=C(\omega)>0$ such that
		\[
		\|\phi_n\|^2_{L^2} \leq C, \quad \forall n\geq 1.
		\] 
		On the other hand, since $G(\phi_n) \leq 0$, we have
		\[
		\|\nabla \phi_n\|^2_{L^2} \leq \frac{3(p-1)}{2(p+1)}\|\phi_n\|^{p+1}_{L^{p+1}}
		\]
		which together with the standard Gagliardo-Nirenberg inequality yield
		\[
		\|\nabla \phi_n\|^2_{L^2} \leq C\|\nabla \phi_n\|^{\frac{3(p-1)}{2}}_{L^2}\|\phi_n\|^{\frac{5-p}{2}}_{L^2} \leq C\|\nabla \phi_n\|^{\frac{3(p-1)}{2}}_{L^2}.
		\]
		This shows that 
		\begin{align} \label{low-boun-phi-n}
		\|\nabla \phi_n\|^2_{L^2} \geq C>0, \quad \forall n\geq 1,
		\end{align}
		hence $\tilde{m}_\omega >0$. 	\\
		
\noindent		{\bf Step 2. Minimizers for $\tilde{m}_\omega$.} Let $\{\phi_n\}_n$ be a minimizing sequence for $\tilde{m}_\omega$. Observe that if $\phi_n^*$ is the Schwarz symmetrization of $\phi_n$, then $\{\phi_n^*\}_n$ is still a minimizing sequence for $\tilde{m}_\omega$. Thus without loss of generality, we can assume that $\phi_n$ is radially symmetric. As $I_\omega$ is non-negative, we see that $\{\phi_n\}_n$ is a bounded sequence in $H^1$. Passing to a subsequence if necessary, there exists $\phi \in H^1$ such that $\phi_n \rightharpoonup \phi$ weakly in $H^1$ and $\phi_n \to \phi$ strongly in $L^r$ for all $2<r<6$. As $G(\phi_n)\leq 0$, using \eqref{low-boun-phi-n}, we have
		\[
		\|\phi_n\|^{p+1}_{L^{p+1}} \geq \frac{2(p+1)}{3(p-1)} \|\nabla \phi_n\|^2_{L^2} \geq \frac{2(p+1)}{3(p-1)} C >0, \quad \forall n\geq 1
		\] 
		which shows $\phi \ne 0$. We also have
		\[
		G(\phi) \leq \liminf_{n\to \infty} G(\phi_n) =0,
		\]
		thus
		\begin{align*}
		\tilde{m}_\omega \leq I_\omega(\phi) \leq \liminf_{n\to \infty} I_\omega(\phi_n) = \tilde{m}_\omega.
		\end{align*}
		This shows that $\phi$ is a minimizer for $\tilde{m}_\omega$.\\
		
\noindent		{\bf Step 3. Minimizers for $m_\omega$.} We now show that there exists at least a minimizer for $m_\omega$. Let $\phi$ be a minimizer for $\tilde{m}_\omega$. By Lemma \ref{lem-lamb-0}, there exists $\lambda_0\in (0,1]$ such that $G(\phi_{\lambda_0})=0$. Thus
		\[
		m_\omega \leq S_\omega(\phi_{\lambda_0}) =I_\omega(\phi_{\lambda_0}) \leq I_\omega(\phi) =\tilde{m}_\omega = m_\omega.
		\]
		This shows that $\lambda_0=1$, $G(\phi)=0$, and $S_\omega(\phi)=m_\omega$. In particular, $\phi$ is a minimizer for $m_\omega$. 
	\end{proof}

	\section{Scattering criterion}
	\label{S3}
	\setcounter{equation}{0}
In this section, we state and prove a scattering criterion for \eqref{cNLS} which is inspired by the paper of Dodson and Murphy \cite{DM-MRL}. As explained in \cite{DM-MRL}, the criterion infers that if on any time interval which is large enough, one can find a large interval where the scattering norm is small, then the global solution scatters. 

We first introduce the following exponents:
\begin{align} \label{expo-p-1}
	a_1:=\frac{4(p+1)}{3(p-1)}, \quad m_1 := \frac{2(p-1)(p+1)}{5-p}, \quad n_1 := \frac{2(p-1)(p+1)}{3p^2-5p-2}
\end{align}
and
\begin{align} \label{expo-p-2}
b_1:= p+1, \quad r_1:=\frac{6(p-1)(p+1)}{3p^2+2p-13}.
\end{align}
It can be easily checked that $(a_1,b_1)$ and $ (m_1,r_1)$ are Schr\"odinger admissible and
\[
\frac{1}{m_1}+\frac{1}{n_1}=\frac{2}{a_1}, \quad \frac{1}{b_1}=\frac{1}{r_1}-\frac{\sigma_1}{3}, \quad \sigma_1:= \frac{3p-7}{2(p-1)}.
\]

	\begin{proposition} \label{prop-scat-crite}
		Let $\frac{7}{3}<q<p<5$. Suppose that $u(t)$ is a global solution to \eqref{NLS} satisfying $\|u\|_{L^\infty_t(\R, H^1_x)} <\infty$. Then there exist $\vareps>0$ and $T_0 = T_0(\vareps)>0$ such that if for any $a>0$, there exists $t_0 \in (a,a+T_0)$ such that $[t_0-\vareps^{-\sigma}, t_0] \subset (a,a+T_0)$ and
		\begin{align} \label{scat-cond}
		\|u\|_{L^{m_1}_t([t_0-\vareps^{-\sigma}, t_0], L^{b_1}_x)} \lesssim \vareps
		\end{align}
		for some $\sigma>0$, then the solution scatters forward in time.
	\end{proposition}
	
	\begin{proof}
		By Lemma \ref{lem-small-scat}, it suffices to prove that there exists $T>0$ such that
		\begin{align} \label{scat-cond-appl}
		\|e^{i(t-T)\Delta} u(T)\|_{L^{m_1}_t([T,\infty), L^{b_1}_x)} \lesssim \vareps^\mu
		\end{align}
		for some $\mu>0$. To show \eqref{scat-cond-appl}, we write
		\[
		e^{i(t-T)\Delta} u(T) = e^{it\Delta} u_0 + i \int_0^T e^{i(t-s)\Delta} \( |u(s)|^{p-1} u(s) - |u(s)|^{q-1} u(s)\) ds.
		\]
		By Sobolev embedding and Strichartz estimate (see Proposition \ref{prop-str-est}), we have
		\[
		\|e^{it\Delta} u_0\|_{L^{m_1}_t(\R, L^{b_1}_x)}\lesssim \|u_0\|_{H^1_x}.
		\]
		By the monotone convergence theorem, there exists $T_1>0$ sufficiently large such that for all $T>T_1$,
		\begin{align} \label{est-line-part}
		\|e^{it\Delta} u_0\|_{L^{m_1}_t([T,\infty), L^{b_1}_x)} \lesssim \vareps.
		\end{align}
		Taking $a=T_1$ and $T=t_0$ with $a$ and $t_0$ as in \eqref{scat-cond}, we write
		\begin{align*}
		&\int_0^T e^{i(t-s)\Delta} \(|u(s)|^{p-1} u(s) -|u(s)|^{q-1} u(s)\) ds =\\
		& \int_{I_1\cup I_2}  e^{i(t-s)\Delta} \(|u(s)|^{p-1} u(s) -|u(s)|^{q-1} u(s)\) ds 	
		\end{align*}
		where $I_1=[T-\vareps^{-\sigma},T]$ and $I_2=[0,T-\vareps^{-\sigma}]$. By using the linearity, we denote by  $F_1(t)$ and $F_2(t)$ the integrals over $I_1$ and $I_2$, respectively.\\		
		
\noindent 		To estimate $F_1$, we start with the following observation:
		\begin{align} \label{obse-scat-cond}
		\|u\|_{L^{qn'_1}_t(I_1, L^{qb'_1}_x)} \leq \|u\|^\theta_{L^{m_1}_t(I_1, L^{b_1}_x)} \|u\|^{1-\theta}_{L^{\rho}_t(I_1, L^{\gamma}_x)},
		\end{align}
		where
		\[
		\theta = \frac{3pq-3q-4p}{q(3p-7)}, \quad \rho= \frac{(1-\theta)qm_1 n'_1}{m_1-\theta q n'_1}, \quad \gamma=\frac{(1-\theta)q b_1 b'_1}{b_1-\theta qb'_1}.
		\]
		Since $\frac{7}{3}<q<p<5$, we see that 
		\[
		\theta \in (0,1), \quad \frac{2}{\rho}+\frac{3}{\gamma}=\frac{3}{2}, \quad \gamma \in [2,6].
		\]
		In particular, $(\rho,\gamma)$ is a Schr\"odinger admissible pair. Thanks to Strichartz estimates for non-admissible pairs (see Proposition \ref{prop-str-est}), we have
		\begin{align*}
		\|F_1\|_{L^{m_1}_t([T,\infty), L^{b_1}_x)} &\leq C\||u|^{p-1} u\|_{L^{n'_1}_t(I_1, L^{b'_1}_x)} +C\||u|^{q-1} u\|_{L^{n'_1}_t(I_1,L^{b'_1}_x)} \\
		&\leq C \|u\|^p_{L^{m_1}_t(I_1, L^{b_1}_x)} + C\|u\|^q_{L^{qn'_1}_t(I_1, L^{qb'_1}_x)} \\
		&\leq C \|u\|^p_{L^{m_1}_t(I_1, L^{b_1}_x)} + C\|u\|^{q\theta}_{L^{m_1}_t(I_1, L^{b_1}_x)} \|u\|^{q(1-\theta)}_{L^{\rho}_t(I_1, L^{\gamma}_x)} \\
		&\leq C \|u\|^p_{L^{m_1}_t(I_1, L^{b_1}_x)} +C|I_1|^{\frac{q}{\rho}(1-\theta)} \|u\|^{q\theta}_{L^{m_1}_t(I_1,L^{b_1}_x)} \|u\|^{q(1-\theta)}_{L^\infty_t(I_1, H^1_x)}.
		\end{align*}
		Since $\|u\|_{L^\infty_t(\R, H^1_x)} <\infty$, we infer from \eqref{scat-cond} that
		\[
		\|F_1\|_{L^{m_1}_t([T,\infty), L^{b_1}_x)} \leq C\vareps^p + C\vareps^{q\theta - \frac{q}{\rho}\sigma(1-\theta)}.
		\]
		Taking $\sigma>0$ small, we have 
\begin{align} \label{est-F1}
\|F_1\|_{L^{m_1}_t([T,\infty), L^{b_1}_x)} \leq C \vareps^\mu
\end{align}
for some $\mu>0$.\\
	
\noindent 		To estimate $F_2$, we use H\"older's inequality to have
		\[
		\|F_2\|_{L^{m_1}_t([T,\infty), L^{b_1}_x)} \leq \|F_2\|^{\frac{r_1}{b_1}}_{L^{m_1}_t([T,\infty), L^{r_1}_x)} \|F_2\|^{\frac{b_1-r_1}{b_1}}_{L^{m_1}_t([T,\infty), L^\infty_x)}.
		\]
		As $(m_1, r_1)$ is a Schr\"odinger admissible pair and
		\[
		F_2(t) = e^{i(t-T+\vareps^{-\sigma})\Delta} u(T-\vareps^{-\sigma}) - e^{it\Delta} u_0,
		\]
		we have
		\[
		\|F_2\|_{L^{m_1}_t([T,\infty), L^{r_1}_x)} \lesssim 1.
		\]
		On the other hand, by the dispersive estimates \eqref{dis-est} and Sobolev embedding, and noting that $m_1>2$, we have for all $t\geq T$, 
		\begin{align*}
		\|F_2(t)\|_{L^\infty_x} &\lesssim \int_0^{T-\vareps^{-\sigma}} (t-s)^{-\frac{3}{2}} \(\||u(s)|^{p-1} u(s)\|_{L^1_x} + \||u(s)|^{q-1} u(s)\|_{L^1_x} \) ds \\
		&=\int_0^{T-\vareps^{-\sigma}} (t-s)^{-\frac{3}{2}} \(\|u(s)\|^p_{L^p_x} + \|u(s)\|^q_{L^q_x} \) ds \\
		&\lesssim \int_0^{T-\vareps^{-\sigma}} (t-s)^{-\frac{3}{2}} \(\|u(s)\|^p_{H^1_x} + \|u(s)\|^q_{H^1_x} \) ds \\
		&\lesssim (t-T+\vareps^{-\sigma})^{-\frac{1}{2}}.
		\end{align*}
		It follows that
		\begin{align*}
		\|F_2\|_{L^{m_1}_t([T,\infty), L^\infty_x)} \lesssim \Big( \int_T^\infty (t-T+\vareps^{-\sigma})^{-\frac{m_1}{2}} dt\Big)^{\frac{1}{m_1}} \lesssim \vareps^{\sigma  \(\frac{1}{2}-\frac{1}{m_1}\)}.
		\end{align*}
		In particular, we get
		\begin{align} \label{est-F2}
		\|F_2\|_{L^{m_1}_t([T,\infty), L^{b_1}_x)} \lesssim \vareps^{\sigma\(\frac{1}{2}-\frac{1}{m_1}\) \frac{r_1-b_1}{r_1}}.
		\end{align}
		Collecting \eqref{est-line-part}, \eqref{est-F1}, \eqref{est-F2}, and
		choosing $\sigma>0$ sufficiently small, we prove \eqref{scat-cond-appl}. The proof is complete.
	\end{proof}
	
	\section{Energy Scattering}
This section is the bulk of the paper and contains the main novelty. Specifically, we will prove a coercivity property, see Lemma \ref{lem-A-omega-plus} below, and the interaction Morawetz estimates that will allow to prove the scattering for large data global solutions to \eqref{NLS}. 

	\subsection{A cutoff function}
	
	Let $\eta \in (0,1)$ be a small constant. Let $\chi$ be a smooth decreasing radial function satisfying
	\begin{align} \label{def-chi}
	\chi(x) =\chi(r)= \left\{
	\begin{array}{ccl}
	1 &\text{if}& r \leq 1-\eta, \\
	0 &\text{if}& r>1, 
	\end{array}
	\right. \quad |\chi'(r)| \lesssim \frac{1}{\eta}, \quad r=|x|.
	\end{align}
	For $R>0$ large, we define the functions
	\begin{align*} 
	\phi_R(x):= \frac{1}{\omega_3 R^3} \int \chi^2_R(x-z) \chi^2_R(z) dz 
	\end{align*}
	and
	\begin{align*} 
	\phi_{p,R}(x):&= \frac{1}{\omega_3 R^3} \int \chi^2_R(x-z) \chi^{p+1}_R(z) dz, \\
	\phi_{q,R}(x):&= \frac{1}{\omega_3 R^3} \int \chi^2_R(x-z) \chi^{q+1}_R(z) dz,
	\end{align*}
	where $\chi_R(z):=\chi(z/R)$, and $\omega_3$ is the volume of the unit ball in $\R^3$. We see that $\phi_R$, $\phi_{p,R}$, and $\phi_{q,R}$ are radial functions. We next define the radial function
	\begin{align} \label{def-psi}
	\psi_R(x) = \psi_R(r):= \frac{1}{r} \int_0^{r} \phi_R(\tau) d\tau, \quad r=|x|.
	\end{align}
	We collect some basic properties of $\phi_R, \phi_{p,R}, \phi_{q,R}$, and $\psi_R$ as follows (see \cite{Dinh-NDEA} for a proof.).
	\begin{lemma}\label{lem-proper-psi}
		We have 
		\begin{align*} 
		|\psi_R(x)| \lesssim \min \left\{1,\frac{R}{|x|} \right\},  \quad \partial_j \psi_R (x) =\frac{x_j}{|x|^2} \left(\phi_R(x) - \psi_R(x)\right), \quad j=1, 2, 3
		\end{align*}
		and
		\begin{align} \label{proper-psi-2}
		\psi_R(x) - \phi_R(x) \geq 0, \quad \phi_R(x)-\phi_{q,R}(x)\geq 0, \quad
		|\phi_R(x) - \phi_{p,R}(x)| \lesssim \eta
		\end{align}
		and
		\begin{align} \label{proper-psi-3}
		|\nabla \phi_R(x)| \lesssim \frac{1}{\eta R}, \quad |\psi_R(x) - \phi_R(x)| \lesssim \frac{1}{\eta}\min \left\{ \frac{|x|}{R}, \frac{R}{|x|}\right\}, \quad |\nabla \psi_R(x)| \lesssim \frac{1}{\eta}\min \left\{\frac{1}{R}, \frac{R}{|x|^2} \right\}
		\end{align}
		for all $x \in \R^3$.
	\end{lemma}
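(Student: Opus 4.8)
The plan is to read off each assertion directly from the convolution formulas for $\phi_R,\phi_{p,R},\phi_{q,R}$ and from the one-dimensional radial formula for $\psi_R$; every bound then reduces to elementary estimates on $\chi$, to volumes of balls, and to one structural fact about convolutions of radial monotone functions. Throughout I would use that $\chi_R^2,\chi_R^{q+1},\chi_R^{p+1}$ are smooth, nonnegative, bounded by $1$, supported in $\{|z|\le R\}$, identically $1$ on $\{|z|\le(1-\eta)R\}$, and that $|\nabla\chi_R(y)|=R^{-1}|\chi'(|y|/R)|\lesssim\frac{1}{\eta R}$. The one genuinely nontrivial point, which I would establish first, is that $\phi_R$ is a \emph{radially nonincreasing} function of $|x|$; granting that, the rest is bookkeeping.

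\emph{Pointwise size, support, regularity.} The integrand defining $\phi_R(x)$ is supported in $\{|x-z|\le R\}\cap\{|z|\le R\}$, which is empty unless $|x|\le 2R$ and has volume at most $\omega_3R^3$; since both factors are $\le 1$ this gives $0\le\phi_R\le 1$ and $\phi_R(x)=0$ for $|x|>2R$. Because $\chi$ is smooth and compactly supported, $\phi_R=\frac{1}{\omega_3R^3}\chi_R^2\ast\chi_R^2$ is smooth, so $\psi_R(r)=\frac1r\int_0^r\phi_R(\tau)\,d\tau$ is continuous and is $C^1$ on $\{r>0\}$. From $0\le\phi_R\le1$ one gets $\psi_R\le1$ for all $r$, and from $\phi_R\equiv0$ on $\{|x|>2R\}$ one gets $\psi_R(r)=\frac1r\int_0^{2R}\phi_R\le\frac{2R}{r}$ for $r>2R$; together these yield $|\psi_R(x)|\lesssim\min\{1,R/|x|\}$.

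\emph{The gradient identity and \eqref{proper-psi-2}.} Writing $g(r)=\int_0^r\phi_R(\tau)\,d\tau$, so that $\psi_R=g(r)/r$ and $\partial_j r=x_j/r$, a one-variable computation gives
\[
\partial_j\psi_R=\frac{x_j}{r}\,\frac{d}{dr}\Big(\frac{g(r)}{r}\Big)=\frac{x_j}{r^2}\Big(g'(r)-\frac{g(r)}{r}\Big)=\frac{x_j}{|x|^2}\big(\phi_R(x)-\psi_R(x)\big),
\]
using $g'=\phi_R$ and $g/r=\psi_R$. For $\psi_R-\phi_R\ge0$ I would invoke the radial monotonicity of $\phi_R$: by the layer-cake identity $\chi_R^2=\int_0^1\mathbf{1}_{\{\chi_R^2>s\}}\,ds$, in which each superlevel set is a ball centred at the origin because $\chi$ is radial and nonincreasing, $\phi_R$ is a nonnegative superposition of the maps $x\mapsto(\mathbf{1}_{B_a}\ast\mathbf{1}_{B_b})(x)=|B_b(0)\cap B_a(x)|$ (with $B_\rho(y)$ the ball of radius $\rho$ centred at $y$), each of which is nonincreasing in $|x|$ since the overlap of two balls shrinks as their centres separate; hence $\phi_R$ is radially nonincreasing, and therefore $\frac1r\int_0^r\phi_R\ge\frac1r\int_0^r\phi_R(r)=\phi_R(r)$. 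For $\phi_R-\phi_{q,R}\ge0$: since $0\le\chi_R\le1$ and $q+1>2$ one has $\chi_R^{q+1}\le\chi_R^2$ pointwise, so the integrand $\chi_R^2(x-z)\big(\chi_R^2(z)-\chi_R^{q+1}(z)\big)$ is nonnegative. For $|\phi_R-\phi_{p,R}|\lesssim\eta$: the analogous integrand is $\chi_R^2(x-z)\chi_R^2(z)\big(1-\chi_R^{p-1}(z)\big)$, which vanishes unless $(1-\eta)R<|z|\le R$; that shell has volume $\omega_3R^3\big(1-(1-\eta)^3\big)\le3\eta\,\omega_3R^3$ while the integrand is $\le1$ there, which gives the claim.

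\emph{The derivative bounds \eqref{proper-psi-3}.} Differentiating under the integral, $\nabla\phi_R(x)=\frac{1}{\omega_3R^3}\int 2\chi_R(x-z)(\nabla\chi_R)(x-z)\chi_R^2(z)\,dz$; bounding $|\nabla\chi_R|\lesssim\frac{1}{\eta R}$, the remaining factors by $1$, and the domain of integration by $\omega_3R^3$ yields $|\nabla\phi_R|\lesssim\frac{1}{\eta R}$. Hence $|\phi_R(\tau)-\phi_R(r)|\le|\tau-r|\cdot\frac{C}{\eta R}$, so for $|x|\le R$,
\[
|\psi_R(r)-\phi_R(r)|=\Big|\frac1r\int_0^r\big(\phi_R(\tau)-\phi_R(r)\big)\,d\tau\Big|\le\frac{C}{\eta R}\cdot\frac1r\int_0^r(r-\tau)\,d\tau=\frac{Cr}{2\eta R}\lesssim\frac1\eta\,\frac{|x|}{R},
\]
while for $|x|\ge R$ I would combine $|\psi_R|\lesssim R/|x|$ with $|\phi_R|\le1\le2R/|x|$ on $\{R\le|x|\le2R\}$ and $\phi_R\equiv0$ on $\{|x|>2R\}$ to get $|\psi_R-\phi_R|\lesssim R/|x|\le\frac1\eta\,R/|x|$. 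Combining the two regimes gives $|\psi_R-\phi_R|\lesssim\frac1\eta\min\{|x|/R,\,R/|x|\}$, and inserting this into the gradient identity yields $|\nabla\psi_R(x)|=\frac1{|x|}|\phi_R(x)-\psi_R(x)|\lesssim\frac1\eta\min\{1/R,\,R/|x|^2\}$. The only step that is not pure bookkeeping is the radial monotonicity of $\phi_R$ used above, which can equivalently be deduced from the Riesz rearrangement inequality.
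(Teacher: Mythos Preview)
Your proof is correct. The paper does not actually supply a proof of this lemma; it only refers the reader to \cite{Dinh-NDEA}. Your argument is self-contained and handles every claim: the convolution-support bounds for $\phi_R$ and $\psi_R$, the one-variable calculation for $\partial_j\psi_R$, the layer-cake/Riesz observation giving radial monotonicity of $\phi_R$ (and hence $\psi_R\ge\phi_R$), the pointwise comparison $\chi_R^{q+1}\le\chi_R^2$ for $\phi_R-\phi_{q,R}\ge0$, the shell-volume estimate for $|\phi_R-\phi_{p,R}|\lesssim\eta$, and the Lipschitz/mean-value bound for $|\psi_R-\phi_R|$ which then feeds into the gradient identity to give $|\nabla\psi_R|$. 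Each step checks out; the only nontrivial ingredient is exactly the one you flag, namely that the convolution of two radially nonincreasing functions is radially nonincreasing, and your layer-cake justification is standard and correct.
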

	
	\subsection{A coercivity property}\label{subsec:coer} The following is the essential new ingredient that will be exploited to prove suitable interaction Morawetz estimates. As already mentioned in the introduction, the scaling invariant equation with  one nonlinearity is treated by taking advantage of the refined Gagliardo-Nirenberg inequality \eqref{refined-GN}, that we cannot use in the present paper. 

	\begin{lemma} \label{lem-A-omega-plus} 
		Let $\frac{7}{3}<q<p<5$ and $\omega>0$. Then $\Ac^+_\omega$ is invariant under the flow of \eqref{NLS}, i.e., if $u_0 \in \Ac^+_\omega$, then $u(t) \in \Ac^+_\omega$ for all $t\in I_{\max}$. In particular, the solution to \eqref{NLS} with initial data $u_0$ exists globally in time. In addition, there exists $R_0>0$ sufficiently large such that for all $R\geq R_0$, all $z\in \R^3$, and all $t\in \R$,
		\begin{align} \label{est-G-u-chi-R}
		G(\chi_R(\cdot-z) u^\xi(t)) \geq \delta \|\nabla (\chi_R(\cdot-z) u^\xi(t))\|^2_{L^2},
		\end{align}
		where $u^\xi(t,x):=e^{ix\cdot \xi} u(t,x)$ with $\xi=\xi(t,z,R)$ and 
		\begin{align} \label{def-xi}
		\xi(t,z,R) := \left\{
		\begin{array}{cl}
		-\frac{\displaystyle{\int} \ima (\chi^2_R(x-z) \overline{u}(t,x) \nabla u(t,x)) dx}{\displaystyle{\int} \chi^2_R(x-z) |u(t,x)|^2 dx} &\text{ if } \mathlarger{\int} \chi^2_R(x-z) |u(t,x)|^2 dx \ne 0, \\
		0 &\text{ if } \mathlarger{\int} \chi^2_R(x-z) |u(t,x)|^2 dx =0,
		\end{array}
		\right.
		\end{align}
		and	$\chi_R(x)=\chi(x/R)$ with $\chi$ as in \eqref{def-chi}
	\end{lemma}
	
	\begin{proof}
		Let $u_0 \in \Ac^+_\omega$. We will show that $u(t) \in \Ac^+_\omega$ for all $t\in I_{\max}$. By the conservation of mass and energy, we have $S_\omega(u(t)) = S_\omega(u_0)<m_\omega$ for all $t\in I_{\max}$. Assume by contradiction that there exists $t_0$ such that $G(u(t_0))<0$. As $u:I_{\max} \to H^1$ is continuous, there exists $t_1$ such that $G(u(t_1))=0$. By the definition of $m_\omega$, we have $S_\omega(u(t_1)) \geq m_\omega$ which is a contradiction. Thus $G(u(t)) \geq 0$ for all $t\in I_{\max}$, namely $\Ac^+_\omega$ is invariant under the flow of \eqref{NLS}.
		
		As $G(u(t)) \geq 0$, we have
		\begin{align*}
		\frac{3q-7}{6(q-1)}&\|\nabla u(t)\|^2_{L^2} +\frac{p-q}{(p+1)(q-1)} \|u(t)\|^{p+1}_{L^{p+1}} +\frac{\omega}{2} \|u(t)\|^2_{L^2} \\
		&=I_\omega(u(t)) = S_\omega(u(t)) -\frac{2}{3(q-1)}G(u(t)) \leq S_\omega(u(t)) <m_\omega,  \quad \forall t\in I_{\max}.
		\end{align*}
	This shows that 
		\begin{align} \label{est-grad-u}
		\|\nabla u(t)\|^2_{L^2} \leq \frac{6(q-1)}{3q-7} m_\omega,\quad \forall t\in I_{\max}.
		\end{align}
		The blow-up alternative shows that the solution must exist globally in time.\\
			
		Next we prove \eqref{est-G-u-chi-R}. To this end, we observe that
		\begin{align} \label{m-omega-nu}
		I_\omega(u(t)) =S_\omega(u(t))-\frac{2}{3(q-1)} G(u(t)) \leq S_\omega(u(t)) =S_\omega(u_0)=m_\omega-\nu, \quad \forall t\in I_{\max},
		\end{align}
		for some $\nu>0$ as $S_\omega(u_0)<m_\omega$. Using 
		\begin{align}\label{proper-chi}
		\int |\nabla(\chi u)|^2 dx = \int \chi^2 |\nabla u|^2 dx -\int \chi \Delta \chi |u|^2 dx,
		\end{align}
		we have
		\begin{align*}
		\int |\nabla(\chi u^\xi)|^2 dx = |\xi|^2 \int \chi^2 |u|^2 dx + \int \chi^2 |\nabla u|^2 dx - \int \chi \Delta \chi |u|^2 dx + 2 \xi \cdot \int \ima(\chi^2 \overline{u} \nabla u) dx.
		\end{align*}
		By the definition of $\xi$, we have
		\begin{align*}
		I_\omega(\chi_R(\cdot-z) u^\xi(t)) &= \frac{3q-7}{6(q-1)} \Big( \int \chi^2_R(\cdot-z) |\nabla u(t)|^2 dx \\
		&\quad\quad\quad\quad\quad\quad\quad- \int \chi_R(\cdot-z) \Delta\chi_R (\cdot-z) |u(t)|^2 dx \Big) \\
		& - \frac{3q-7}{6(q-1)} \frac{\Big|\displaystyle{\int} \ima(\chi^2_R(\cdot-z) \overline{u}(t,x) \nabla u(t,x)) dx\Big|^2}{\displaystyle{\int} \chi^2_R(\cdot-z) |u(t,x)|^2 dx} \\
		& + \frac{p-q}{(p+1)(p-1)} \int \chi^{p+1}_R(\cdot-z) |u(t)|^{p+1} dx + \frac{\omega}{2} \int \chi^2_R(\cdot-z) |u(t)|^2 dx \\
		&\leq I_\omega(u(t)) + O(R^{-2}).
		\end{align*}
		Thus, by \eqref{m-omega-nu}, there exists $R_0>0$ sufficiently large such that 
		\begin{align} \label{I-omega-u}
		I_\omega(\chi_R(\cdot-z) u^\xi(t)) \leq  m_\omega -\frac{\nu}{2}, \quad \forall R\geq R_0, \, \forall z \in \R^3, \, \forall t\in \R.
		\end{align}
		We claim that 
		\begin{align} \label{claim-G}
		G(\chi_R(\cdot-z) u^\xi(t)) > 0, \quad  \forall R\geq R_0,\, \forall z\in \R^3,\, \forall t\in \R.
		\end{align} 
		Suppose now that there exists $R_1\geq R_0$, $z_1 \in \R^3$, $t_1\in \R$, and $\xi_1=\xi(t_1, z_1, R_1) \in \R^3$ such that $G(\chi_{R_1}(\cdot-z_1) u^{\xi_1}(t_1)) \leq 0$. Using the definition of $\tilde{m}_\omega$ (see \eqref{m-omega-tilde}) and the fact that $m_\omega=\tilde{m}_\omega$, we have
		\[
		I_\omega(\chi_{R_1}(\cdot-z_1) u^{\xi_1}(t_1)) \geq m_\omega
		\]
		which contradicts \eqref{I-omega-u}. 
		
		For $R\geq R_0, z\in \R^3$, $t\in \R$, and $\xi=\xi(t,z,R) \in \R^3$, we denote $\Theta:=\chi_R(x-z) u^\xi(t)$. We consider two cases.\medskip
		
\noindent 		{\bf Case 1.} Assume that
		\[
		4 \|\nabla \Theta\|^2_{L^2} - \frac{3(p-1)(3p+1)}{4(p+1)}\|\Theta\|^{p+1}_{L^{p+1}} + \frac{3(q-1)(3q+1)}{4(q+1)} \|\Theta\|^{q+1}_{L^{q+1}} \geq 0.
		\]
		Then we have
		\begin{align*}
		G(\Theta)&= \|\nabla \Theta\|^2_{L^2} -\frac{3(p-1)}{2(p+1)}\|\Theta\|^{p+1}_{L^{p+1}} +\frac{3(q-1)}{2(q+1)}\|\Theta\|^{q+1}_{L^{q+1}} \\
		&\geq \frac{3p-7}{3p+1}\|\nabla \Theta\|^2_{L^2} + \frac{9(q-1)(p-q)}{2(q+1)(3p+1)} \|\Theta\|^{q+1}_{L^{q+1}} \\
		&\geq \frac{3p-7}{3p+1}\|\nabla \Theta\|^2_{L^2}.
		\end{align*}
		This proves \eqref{est-G-u-chi-R}.
		\medskip
		
\noindent		{\bf Case 2.} We now assume that
		\begin{align} \label{case-2}
		4\|\nabla \Theta\|^2_{L^2} -\frac{3(p-1)(3p+1)}{4(p+1)} \|\Theta\|^{p+1}_{L^{p+1}} + \frac{3(q-1)(3q+1)}{4(q+1)} \|\Theta\|^{q+1}_{L^{q+1}} <0.
		\end{align}
		We first observe that as $I_\omega(\Theta)<m_\omega$ (see \eqref{I-omega-u}), an argument leading to \eqref{est-grad-u} yields
		\begin{align} \label{est-grad-g}
		\|\nabla \Theta\|_{L^2}^2 \leq \frac{6(q-1)}{3q-7} m_\omega.
		\end{align}
		Now set $f(\lambda):= S_\omega(\Theta_\lambda)$, with $\Theta_\lambda$ as in the rescaling \eqref{scaling}. We have
		\[
		f'(\lambda) = \lambda \|\nabla \Theta\|^2_{L^2} -\frac{3(p-1)}{2(p+1)} \lambda^{\frac{3(p-1)}{2}-1} \|\Theta\|^{p+1}_{L^{p+1}} + \frac{3(q-1)}{2(q+1)} \lambda^{\frac{3(q-1)}{2}-1} \|\Theta\|^{q+1}_{L^{q+1}} = \frac{G(\Theta_\lambda)}{\lambda}
		\]
		and
		\[
		\(\lambda f'(\lambda)\)' = 2\lambda \|\nabla \Theta\|^2_{L^2} -\frac{9(p-1)^2}{4(p+1)} \lambda^{\frac{3(p-1)}{2}-1} \|\Theta\|^{p+1}_{L^{p+1}} + \frac{9(q-1)^2}{4(q+1)} \lambda^{\frac{3(q-1)}{2}-1} \|\Theta\|^{q+1}_{L^{q+1}}.
		\]
		We write
		\begin{align*}
		\(\lambda f'(\lambda)\)' &= -2 f'(\lambda) + \lambda \Big( 4\|\nabla \Theta\|^2_{L^2} -\frac{3(p-1)(3p+1)}{4(p+1)} \lambda^{\frac{3p-7}{2}}\|\Theta\|^{p+1}_{L^{p+1}} \\
		&\quad \mathrel{\phantom{-2 h'(\lambda) + \lambda \Big( 4\|\nabla \Theta\|^2_{L^2}}} + \frac{3(q-1)(3q+1)}{4(q+1)} \lambda^{\frac{3q-7}{2}}\|\Theta\|^{q+1}_{L^{q+1}}  \Big) \\
		&=: -2f'(\lambda) + \lambda h(\lambda).
		\end{align*}
		Thanks to \eqref{case-2}, we have
		\begin{align*}
		h'(\lambda) &= \lambda^{\frac{3q-7}{2}} \left(-\frac{3(p-1)(3p+1)(3p-7)}{8(p+1)} \lambda^{\frac{3(p-q)}{2}} \|\Theta\|^{p+1}_{L^{p+1}} \right.\\
	&\quad\quad\quad\quad \quad\left.+ \frac{3(q-1)(3q+1)(3p-7)}{8(q+1)}\|\Theta\|^{q+1}_{L^{q+1}}\right) \\
		&\leq \lambda^{\frac{3q-7}{2}} \left(- \frac{3(p-1)(3p+1)(3p-7)}{2(p+1)} \lambda^{\frac{3(p-q)}{2}} \|\nabla \Theta\|^2_{L^2} \right.\\
		&\quad\quad\quad\quad \quad\left.+\frac{3(q-1)(3q+1)}{4(q+1)}\left(-\frac{3p-7}{2}\lambda^{\frac{3(p-q)}{2}} +\frac{3q-7}{2}\right)\|\Theta\|^{q+1}_{L^{q+1}}   \right) \\
		&\leq \lambda^{\frac{3q-7}{2}} \left(-\frac{3(p-1)(3p+1)(3p-7)}{2(p+1)}  \lambda^{\frac{3(p-q)}{2}} \|\nabla \Theta\|^2_{L^2}\right.\\
		 &\quad\quad\quad\quad\quad\left.- \frac{9(q-1)(3q+1)(p-q)}{8(q+1)} \|\Theta\|^{q+1}_{L^{q+1}}\right) <0
		\end{align*}
		for all $\lambda \geq 1$. This shows that $h(\lambda)\leq h(1)<0$ for all $\lambda\geq 1$.  In particular, we have
		\begin{align} \label{omega-plus}
		\(\lambda f'(\lambda)\)' \leq -2f'(\lambda), \quad \forall \lambda \geq 1.
		\end{align}
		As $G(\Theta)>0$ due to \eqref{claim-G}, Lemma \ref{lem-lamb-0} shows $G(\Theta_{\lambda_0})=0$ for some $\lambda_0>1$. Integrating \eqref{omega-plus} over $(1,\lambda_0)$, we get
		\begin{align*} 		
		G(\Theta) \geq 2(S_\omega(\Theta_{\lambda_0}) - S_\omega(\Theta)) = 2 \left(I_\omega(\Theta_{\lambda_0}) - I_\omega(\Theta) - \frac{2}{3(q-1)} G(\Theta) \right).
		\end{align*}
		It follows that
		\begin{align*}
		G(\Theta) &\geq \frac{6(q-1)}{3q+1} (I_\omega(\Theta_{\lambda_0}) - I_\omega(\Theta)) \\
		&\geq \frac{6(q-1)}{3q+1}(m_\omega -I_\omega(\Theta)) \\
		&\geq \frac{6(q-1)}{3q+1}\left(1-\frac{I_\omega(\Theta)}{m_\omega}\right) m_\omega \\
		&\geq \frac{(3q-7)\nu}{2(3q+1)m_\omega} \|\nabla \Theta\|^2_{L^2},
		\end{align*}
		where we have used \eqref{I-omega-u} and \eqref{est-grad-g} to get the last inequality. This also proves \eqref{est-G-u-chi-R}. 		
	\end{proof}

	\subsection{An interaction Morawetz estimate}
	
	We next define the interaction Morawetz action
	\begin{align} \label{defi-M-R}
	\Mca^{\otimes 2}_R(t):= 2\iint |u(t,y)|^2 \psi_R(x-y) (x-y) \cdot \ima ( \overline{u}(t,x) \nabla u(t,x)) dx dy,
	\end{align}
	where $\psi_R$ is as in \eqref{def-psi}.  We start by the following interaction Morawetz identity.

	\begin{lemma}
			Let $u$ be a solution to \eqref{NLS} satisfying
		\begin{align*} 
		\sup_{t\in [0,T^*)} \|u(t)\|_{H^1_x} \leq A
		\end{align*}
		for some constant $A>0$. Let $\Mca^{\otimes 2}_R(t)$ be as in \eqref{defi-M-R}. Then we have
		\begin{align} \label{est-M-R}
		\sup_{t\in [0,T^*)} |\Mca^{\otimes 2}_R(t)| \lesssim_A R.
		\end{align}
		Moreover, we have
		\begin{align}\notag
		\frac{d}{dt} \Mca^{\otimes 2}_R(t) &= - 4 \sum_{j,k}\iint \partial_j \left(\ima (\overline{u}(t,y) \partial_j u(t,y)) \right) \\
		& \quad\quad\quad\quad\quad\quad\times \psi_R(x-y) (x_k-y_k) \ima (\overline{u}(t,x) \partial_k u(t,x)) dx dy \label{term-1}\\\notag
		& - 4 \sum_{j,k} \iint |u(t,y)|^2 \psi_R(x-y) (x_j-y_j) \\
		&\quad\quad\quad\quad\quad\times  \partial_k \left( \rea (\partial_j u(t,x) \partial_k \overline{u}(t,x)) \right) dx dy \label{term-2} \\
		& + \iint |u(t,y)|^2 \psi_R(x-y) (x-y) \cdot \nabla \Delta(|u(t,x)|^2) dx dy \label{term-3} \\
		& + \frac{2(p-1)}{p+1} \iint |u(t,y)|^2 \psi_R(x-y) (x-y) \cdot \nabla (|u(t,x)|^{p+1}) dx dy \label{term-4} \\
		& - \frac{2(q-1)}{q+1} \iint |u(t,y)|^2 \psi_R(x-y) (x-y) \cdot \nabla (|u(t,x)|^{q+1}) dx dy \label{term-5}
		\end{align}
		for all $t\in [0,T^*)$.
	\end{lemma}
	
	\begin{proof}
		By the support property of $\chi$, we have $\phi_R(\tau) =0$ for all $|\tau|\geq 2R$. Thus we get
		\begin{align} \label{est-psi}
		\psi_R(x) |x| \leq  \int_0^{2R} \phi_R(\tau) d\tau =\frac{R}{\omega_3} \int_0^2\int \chi^2(x-z) \chi^2(z) dz d\tau =CR 
		\end{align}
		for some constant $C>0$ independent of $R$. The estimate \eqref{est-M-R} follows directly from H\"older's inequality and \eqref{est-psi}. The identities \eqref{term-1}--\eqref{term-5} follow from a direct computation using 
		\begin{align*}
		\partial_t (|u|^2) = -2\sum_{j} \partial_j(\ima(\overline{u} \partial_j u))
		\end{align*}
		and 
		\begin{align*}
		\partial_t (\ima(\overline{u}\partial_j u)) &= - \sum_k \partial_k \left(2 \rea(\partial_j u \partial_k \overline{u}) - \frac{1}{2}\delta_{jk} \Delta (|u|^2)\right) \\
		&+ \frac{p-1}{p+1} \partial_j (|u|^{p+1}) - \frac{q-1}{q+1}\partial_j(|u|^{q+1}),
		\end{align*}
		for $j=1,2,3$, where $\delta_{jk}$ is the Kronecker symbol.
	\end{proof}
We are now in position to prove our interaction Morawetz estimates, that jointly to the scattering criterion of the previous section will yield to the main result of the paper. 	The coercivity result in Lemma \ref{lem-A-omega-plus} is essential for the proof of the estimates below.
	\begin{proposition}\label{prop-inter-mora-est}
		Let $\frac{7}{3}<q<p<5$ and $\omega>0$. Let $u_0 \in \Ac^+_\omega$ and $u(t)$ be the corresponding solution to \eqref{NLS}. Define $\Mca^{\otimes 2}_R(t)$ as in \eqref{defi-M-R}. Then for $\vareps>0$ sufficiently small, there exist $T_0 = T_0(\vareps)$, $J=J(\vareps)$, $R_0= R_0(\vareps, u_0)$ sufficiently large, and $\eta=\eta(\vareps)>0$ sufficiently small such that for any $a \in \R$,
		\begin{align} \label{inter-mora-est}
		\frac{1}{JT_0} \int_a^{a+T_0}\int_{R_0}^{R_0e^J} \frac{1}{R^3} \iiint \left|\chi_R(y-z)  u(t,y)\right|^2 | \nabla ( \chi_R(x-z)u^\xi(t,x) )|^2 dx dy dz \frac{dR}{R} dt \lesssim \vareps,
		\end{align}
		where $\chi_R(x)=\chi(x/R)$ with $\chi$ as in \eqref{def-chi} and $u^\xi(t,x)=e^{ix\cdot \xi}u(t,x)$ with some $\xi= \xi(t,z,R) \in \R^3$.
	\end{proposition}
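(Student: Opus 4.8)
The plan is to run the interaction Morawetz scheme of Dodson--Murphy \cite{DM-MRL}, replacing the refined Gagliardo--Nirenberg inequality \eqref{refined-GN} (unavailable here) by the localized coercivity of Lemma \ref{lem-A-omega-plus}. By that lemma $u$ is global and $\sup_t\|u(t)\|_{H^1}\le A$ for some $A=A(\omega)$, so the identity \eqref{term-1}--\eqref{term-5} holds and $\sup_t|\Mca^{\otimes 2}_R(t)|\lesssim_A R$. I would first record that $\psi_R(x)\,x=\nabla a_R(x)$ for a radial $a_R$ with Hessian $\partial_j\partial_k a_R=\phi_R\,\delta_{jk}+(\psi_R-\phi_R)\big(\delta_{jk}-\tfrac{x_jx_k}{|x|^2}\big)$ and $\Delta a_R=3\phi_R+2(\psi_R-\phi_R)$, all of which follows from Lemma \ref{lem-proper-psi}.

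Next I would integrate by parts in the space variables in \eqref{term-1} (in $y$), \eqref{term-2} (in $x$), and \eqref{term-3}--\eqref{term-5} (in $x$), turning every weight into a derivative of $a_R$. The tangential part $(\psi_R-\phi_R)(\delta-\hat x\otimes\hat x)$ of the Hessian is positive semidefinite and, contracted against $\rea(\partial_j u\,\partial_k\bar u)$, is simply discarded; all remaining pieces carrying a factor $(\psi_R-\phi_R)$, as well as \eqref{term-3} (which becomes $-\iint|u(t,y)|^2(\Delta^2 a_R)(x-y)|u(t,x)|^2\,dx\,dy$ and is bounded since $a_R$ is smooth), are absorbed into an error $\mathcal E(t,R)$ via the pointwise bounds of Lemma \ref{lem-proper-psi} and $\sup_t\|u(t)\|_{H^1}\le A$. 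What survives from \eqref{term-1}+\eqref{term-2} is, up to $\mathcal E$, a multiple of $\iint|u(t,y)|^2\phi_R(x-y)\big(|\nabla u|^2-|u|^{-2}|\ima(\bar u\nabla u)|^2\big)(t,x)\,dx\,dy$; inserting $\phi_R(x-y)=\tfrac1{\omega_3R^3}\int\chi^2_R(y-z)\chi^2_R(x-z)\,dz$ (immediate from the definition, since $\chi$ is even), expressing the momentum terms through the local quantities $\int\chi^2_R(x-z)\ima(\bar u\nabla u)\,dx$ and $\int\chi^2_R(x-z)|u|^2\,dx$, and completing the square by choosing $\xi=\xi(t,z,R)$ exactly as in \eqref{def-xi}, this becomes $\tfrac{4}{\omega_3R^3}\iiint\chi^2_R(y-z)|u(t,y)|^2\,\chi^2_R(x-z)|\nabla u^\xi(t,x)|^2\,dx\,dy\,dz$.

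The nonlinear terms \eqref{term-4}+\eqref{term-5} similarly reduce (using $\Delta a_R=3\phi_R+2(\psi_R-\phi_R)$) to $\iint|u(t,y)|^2\phi_R(x-y)\big(-\tfrac{6(p-1)}{p+1}|u|^{p+1}+\tfrac{6(q-1)}{q+1}|u|^{q+1}\big)(t,x)\,dx\,dy$ modulo $(\psi_R-\phi_R)$-errors; here I would use $|\phi_R-\phi_{p,R}|\lesssim\eta$ to replace $\phi_R$ by $\phi_{p,R}$ in the focusing term at a cost $O_A(\eta)$, and $\phi_R-\phi_{q,R}\ge0$ to replace $\phi_R$ by $\phi_{q,R}$ in the defocusing term at no cost, after which the representations $\phi_{p,R}(x-y)=\tfrac1{\omega_3R^3}\int\chi^2_R(y-z)\chi^{p+1}_R(x-z)\,dz$ and the analogue for $\phi_{q,R}$ put all three contributions in the same $\iiint\,dx\,dy\,dz$ form with the correct power of $\chi_R(\cdot-z)$. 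Collecting everything, and using $\int|\nabla(\chi_R(\cdot-z)u^\xi)|^2=\int\chi^2_R(\cdot-z)|\nabla u^\xi|^2-\int\chi_R\Delta\chi_R|u|^2$ with the last integral $O_A((\eta R)^{-2})$, the inner $x$-integral of the assembled density equals $4\,G(\chi_R(\cdot-z)u^\xi(t))$ up to such errors, and Lemma \ref{lem-A-omega-plus} gives $G(\chi_R(\cdot-z)u^\xi(t))\ge\delta\|\nabla(\chi_R(\cdot-z)u^\xi(t))\|^2_{L^2}$ for $R\ge R_0$ and $\eta$ small. Hence
\begin{align*}
\frac{d}{dt}\Mca^{\otimes 2}_R(t)\ \ge\ \frac{4\delta}{\omega_3R^3}\iiint\big|\chi_R(y-z)u(t,y)\big|^2\,\big|\nabla(\chi_R(x-z)u^\xi(t,x))\big|^2\,dx\,dy\,dz\ -\ \mathcal E(t,R),
\end{align*}
with $\mathcal E(t,R)\lesssim_A\eta+\eta^{-1}+(\eta R)^{-2}$.

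Finally I would integrate this in $t$ over $[a,a+T_0]$ (by \eqref{est-M-R} the boundary contribution is $\lesssim_A R$), divide by $\omega_3R^3$, integrate in $dR/R$ over $[R_0,R_0e^J]$, and normalize by $(JT_0)^{-1}$: the boundary term contributes $\lesssim_A R_0(e^J-1)(JT_0)^{-1}$, the $O(\eta)$ error contributes $\lesssim_A\eta$, and the $R$-independent but $\eta^{-1}$-sized errors contribute only $\lesssim_A(J\eta R_0^{3})^{-1}$, thanks to the extra factor $R^{-3}$ and the measure $dR/R$. Choosing $\eta=\eta(\vareps)$ small, then $R_0=R_0(\vareps,u_0)$ large (past the coercivity threshold of Lemma \ref{lem-A-omega-plus} and large enough for the $\eta^{-1}$-errors), then $J=J(\vareps)$ and $T_0=T_0(\vareps)$ large with $R_0(e^J-1)(JT_0)^{-1}\lesssim\vareps$, yields \eqref{inter-mora-est}. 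I expect the main difficulty to be the middle two steps: organizing the identity so that $G(\chi_R(\cdot-z)u^\xi)$ emerges after the Galilean boost --- the essential point being that $\xi$ must be the \emph{local} mean momentum over $B(z,R)$, so that after localization one sees the Pohozaev functional of a genuine $H^1$ bump and Lemma \ref{lem-A-omega-plus} is applicable --- and then checking that every remainder survives the $(JT_0)^{-1}\!\int\!\int R^{-3}\,dR/R\,dt$ averaging. The three conditions $\psi_R\ge\phi_R$, $\phi_R-\phi_{q,R}\ge0$, $|\phi_R-\phi_{p,R}|\lesssim\eta$ of Lemma \ref{lem-proper-psi} are used for three distinct purposes --- discarding the tangential Hessian, swapping the defocusing weight for free, and swapping the focusing weight cheaply --- and their interplay with the coercivity is what substitutes for the inequality \eqref{refined-GN}.
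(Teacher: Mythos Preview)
Your overall strategy is the same as the paper's: run the Dodson--Murphy interaction Morawetz identity, isolate the diagonal $\phi_R$--contribution as $\tfrac{4}{\omega_3 R^3}\iiint|\chi_R(y-z)u|^2\,G(\chi_R(\cdot-z)u^\xi)\,dz$ via the local Galilean boost \eqref{def-xi}, and then invoke Lemma \ref{lem-A-omega-plus}. The assembly of the main term is correctly described. The gap is in your final error bookkeeping.

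You propose to ``divide by $\omega_3R^3$'' before integrating in $dR/R$, and then assert that the $\eta^{-1}$-sized remainders pick up an ``extra factor $R^{-3}$'', yielding $(J\eta R_0^3)^{-1}$. This is not how the inequality is structured. The factor $R^{-3}$ in \eqref{inter-mora-est} is \emph{already} produced by the main term $\tfrac{4}{\omega_3 R^3}\iiint\cdots$ coming from the convolution representation of $\phi_R$; the error terms in $\tfrac{d}{dt}\Mca^{\otimes 2}_R$ carry no such factor. If you divide by another $R^3$ the main term becomes $R^{-6}\iiint\cdots$ and no longer matches the target, while if you do not, your pointwise bound $\mathcal E(t,R)\lesssim\eta^{-1}$ integrates to $\eta^{-1}$ after $\tfrac{1}{JT_0}\int\!\!\int\tfrac{dR}{R}\,dt$, which is large.

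The paper handles the two $\eta^{-1}$-looking remainders by different mechanisms. The tangential cross term from \eqref{term-1} is \emph{not} treated as an error: it is paired with the tangential gradient term from \eqref{term-2} (the one you ``simply discard''), and their sum is shown to be $\ge 0$ by Cauchy--Schwarz in the angular variables, using $\psi_R-\phi_R\ge 0$. One cannot throw away the positive piece first and then hope to control the cross term cheaply. The $(\psi_R-\phi_R)$ remainder from the focusing nonlinearity, on the other hand, is controlled not by the crude sup bound but by the finer estimate $|\psi_R-\phi_R|(x)\lesssim\eta^{-1}\min\{|x|/R,\,R/|x|\}$ of \eqref{proper-psi-3} together with $\int_0^\infty\min\{r/R,\,R/r\}\,\tfrac{dR}{R}\lesssim 1$; this is what yields the small factor $(\eta J)^{-1}$ after the $dR/R$ average (see \eqref{est-4}), not any division by $R^3$. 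With these two corrections your outline goes through and matches the paper's choice $\eta=\vareps$, $J=\vareps^{-3}$, $R_0=\vareps^{-1}$, $T_0=e^{\vareps^{-3}}$.
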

	
	\begin{proof}
		Denote
		\[
		P_{jk}(x):= \delta_{jk} - \frac{x_jx_k}{|x|^2}.
		\]
		As 
		\[
		\partial_j(\psi_R x_k) = \delta_{jk}\psi_R +\frac{x_jx_k}{|x|^2} (\phi_R-\psi_R),
		\]
		the integration by parts yields
		\begin{align}
		\eqref{term-1} &= 4\sum_{j,k} \iint \ima(\overline{u}(t,y) \partial_j u(t,y)) \partial^y_j(\psi_R(x-y) (x_k-y_k)) \ima(\overline{u}(t,x) \partial_ku(t,x)) dxdy \nonumber \\
		&=-4 \sum_{j,k} \iint \ima(\overline{u}(t,y) \partial_j u(t,y)) \delta_{jk} \phi_R(x-y) \ima(\overline{u}(t,x) \partial_k u(t,x)) dxdy \label{term-11}\\\notag
		& -4 \sum_{j,k} \iint \ima(\overline{u}(t,y) \partial_ju(t,y)) P_{jk}(x-y) \\
		&\quad\quad\quad\quad\quad\times(\psi_R-\phi_R)(x-y) \ima(\overline{u}(t,x) \partial_k u(t,x)) dxdy, \label{term-12}
		\end{align}
		where $\partial^y_j$ is $\partial_j$ with respect to the $y$-variable. Similarly, we have
		\begin{align}
		\eqref{term-2} &=4\sum_{j,k} \iint |u(t,y)|^2 \partial^x_k(\psi_R(x-y) (x_j-y_j)) \rea(\partial_j u(t,x) \partial_k \overline{u}(t,x)) dxdy \nonumber \\
		&=4\sum_{j,k} \iint |u(t,y)|^2 \delta_{jk}\phi_R(x-y) \rea(\partial_j u(t,x) \partial_k \overline{u}(t,x)) dxdy \label{term-21} \\
		&- 4\sum_{j,k} \iint |u(t,y)|^2 P_{jk}(x-y) (\psi_R-\phi_R)(x-y) \rea(\partial_j u(t,x) \partial_k \overline{u}(t,x)) dxdy, \label{term-22}
		\end{align}
		where $\partial^x_k$ is $\partial_k$ with respect to the $x$-variable. We have
		\begin{align*}
		\eqref{term-12}+\eqref{term-22} &=4 \iint |u(t,y)|^2 |\nnabla_y u(t,x)|^2 (\psi_R-\phi_R)(x-y) dxdy  \\
		& -4\iint \ima(\overline{u}(t,y) \nnabla_x u(t,y)) \cdot \ima(\overline{u}(t,x) \nnabla_y u(t,x)) (\psi_R-\phi_R)(x-y) dxdy,
		\end{align*}
		where
		\begin{align*}
		\nnabla_y u(t,x) := \nabla u(t,x) - \frac{x-y}{|x-y|} \left( \frac{x-y}{|x-y|} \nabla u(t,x)\right)
		\end{align*}
		is the angular derivative centered at $y$, and similarly for $\nnabla_x u(t,y)$. As $\psi_R-\phi_R\geq 0$, the Cauchy-Schwarz inequality yields
		\[
		\eqref{term-12}+\eqref{term-22} \geq 0.
		\]
		Next, using the fact that
		\[
		\phi_R(x-y) =\frac{1}{\omega_3R^3}\int \chi^2_R(x-y-z) \chi^2_R(z) dz = \frac{1}{\omega_3R^3} \int \chi^2_R(x-z) \chi^2_R(y-z) dz,
		\]
		we have
		\begin{align*}
		\eqref{term-11}+\eqref{term-21} &= 4\iint \phi_R(x-y) \left(|u(t,y)|^2|\nabla u(t,x)|^2\right. \\
		&\left.\quad\quad\quad\quad\quad\quad\quad\quad- \ima(\overline{u}(t,y) \nabla u(t,y)) \cdot \ima(\overline{u}(t,x) \nabla u(t,x))\right) dxdy \\
		&= \frac{4}{\omega_3R^3}\iiint \chi^2_R(x-z) \chi^2_R(y-z) \left(|u(t,y)|^2|\nabla u(t,x)|^2\right.\\ 
		&\left.\quad\quad\quad\quad\quad\quad\quad\quad- \ima(\overline{u}(t,y) \nabla u(t,y)) \cdot \ima(\overline{u}(t,x) \nabla u(t,x)) \right) dxdydz.
		\end{align*}
		For $z\in \R^3$ fixed, we consider the quantity defined by
		\begin{align*}
		\iint \chi^2_R(x-z) \chi^2_R(y-z) &\left(|u(t,y)|^2 |\nabla u(t,x)|^2\right.\\
		&\left.\quad-\ima(\overline{u}(t,y) \nabla u(t,y)) \cdot \ima(\overline{u}(t,x) \nabla u(t,x)) \right) dxdy.
		\end{align*}
		It is not hard to see that the above quantity is invariant under the Galilean transformation $u(t,x) \mapsto u^\xi(t,x)$ for all $\xi \in \R^3$ due to the symmetry of $\chi$. We will choose a suitable $\xi\in \R^3$ such that
		\[
		\int \ima(\chi^2_R(x-z) \overline{u}^\xi(t,x) \nabla (u^\xi(t,x))) dx =0.
		\]
		Specifically, we select $\xi$ as follows (compare with \eqref{def-xi}): 
		\[
		\xi =\xi(t,z,R)= - \frac{\displaystyle{\int} \ima(\chi^2_R(x-z) \overline{u}(t,x) \nabla u(t,x)) dx }{\displaystyle{\int} \chi^2_R(x-z) |u(t,x)|^2 dx},
		\]
		provided that the denominator is non-zero (otherwise $\xi =0$ suffices). With this $\xi$, we get
		\begin{align*} 
		\eqref{term-11}+\eqref{term-21} = \frac{4}{\omega_3 R^3} \iiint |\chi_R(y-z) u(t,y)|^2 |\chi_R(x-z) \nabla (u^\xi(t,x))|^2 dxdydz. 
		\end{align*}
		By integration by parts twice and using 
		\[
		\sum_j \partial_j (\psi_R x_j) = 3\psi_R +\sum_j x_j \partial_j \psi_R = 3\phi_R + 2(\psi_R-\phi_R),
		\]
		we have
		\begin{align}
		\eqref{term-3} &= \sum_{j,k} \iint |u(t,y)|^2 \psi_R(x-y) (x_j-y_j) \partial_j \partial^2_k(|u(t,x)|^2) dxdy \nonumber \\
		&=-\sum_{j,k} \iint |u(t,y)|^2 \partial^x_j(\psi_R(x-y)(x_j-y_j)) \partial^2_k(|u(t,x)|^2) dxdy \nonumber \\
		&=\sum_{k} \iint |u(t,y)|^2 \partial^x_k(3\phi_R(x-y) + 2(\psi_R-\phi_R)(x-y)) \partial_k(|u(t,x)|^2) dxdy. \nonumber
		\end{align}
		We also have
		\begin{align}
		\eqref{term-4} &= -\frac{2(p-1)}{p+1} \sum_j \iint |u(t,y)|^2 \partial^x_j(\psi_R(x-y) (x_j-y_j)) |u(t,x)|^{p+1} dxdy \nonumber \\
		&=-\frac{6(p-1)}{p+1}\iint |u(t,y)|^2 \phi_{p,R}(x-y)|u(t,x)|^{p+1} dxdy \label{term-4-1}\\
		& -\frac{6(p-1)}{p+1} \iint |u(t,y)|^2 (\phi_R-\phi_{p,R})(x-y) |u(t,x)|^{p+1} dxdy \nonumber \\
		& -\frac{4(p-1)}{p+1}\iint |u(t,y)|^2 (\psi_R-\phi_R)(x-y) |u(t,x)|^{p+1} dxdy.\nonumber 
		\end{align}
		We can rewrite \eqref{term-4-1} as
		\begin{align*}
		\eqref{term-4-1} = -\frac{6(p-1)}{(p+1)\omega_3 R^3} \iiint |\chi_R(y-z) u(t,y)|^2 |\chi_R(x-z) u(t,x)|^{p+1} dxdydz. 
		\end{align*}
		Similarly, we have
		\begin{align*}
		\eqref{term-5} &=\frac{6(q-1)}{(q+1)\omega_3 R^3}\iiint |\chi_R(y-z)u(t,y)|^2 |\chi_R(x-z)u(t,x)|^{q+1} dxdydz \\
		& +\frac{6(q-1)}{q+1} \iint |u(t,y)|^2 (\phi_R-\phi_{q,R})(x-y) |u(t,x)|^{q+1} dxdy\\
		& +\frac{4(q-1)}{q+1}\iint |u(t,y)|^2 (\psi_R-\phi_R)(x-y) |u(t,x)|^{q+1} dxdy.
		\end{align*}
		Collecting the above identities, we obtain
		\begin{align*}
		\frac{d}{dt} \Mca^{\otimes 2}_R(t) &\geq \frac{4}{\omega_3R^3} \iiint |\chi_R(y-z) u(t,y)|^2 |\chi_R(x-z) \nabla (u^\xi(t,x))|^2 dxdydz \\
		&  + \iint |u(t,y)|^2 \nabla(3\phi_R(x-y) + 2(\psi_R-\phi_R)(x-y)) \cdot \nabla(|u(t,x)|^2) dxdy \\
		& - \frac{6(p-1)}{(p+1)\omega_3 R^3} \iiint |\chi_R(y-z) u(t,y)|^2 |\chi_R(x-z) u(t,x)|^{p+1} dxdydz \\
		& -\frac{6(p-1)}{p+1} \iint |u(t,y)|^2 (\phi_R-\phi_{p,R})(x-y) |u(t,x)|^{p+1} dxdy \\
		& -\frac{4(p-1)}{p+1}\iint |u(t,y)|^2 (\psi_R-\phi_R)(x-y) |u(t,x)|^{p+1} dxdy \\
		& + \frac{6(q-1)}{(q+1)\omega_3 R^3}\iiint |\chi_R(y-z)u(t,y)|^2 |\chi_R(x-z)u(t,x)|^{q+1} dxdydz \\
		& +\frac{6(q-1)}{q+1} \iint |u(t,y)|^2 (\phi_R-\phi_{q,R})(x-y) |u(t,x)|^{q+1} dxdy \\
		& +\frac{4(q-1)}{q+1}\iint |u(t,y)|^2 (\psi_R-\phi_R)(x-y) |u(t,x)|^{q+1} dxdy.
		\end{align*}
		As $\psi_R -\phi_R\geq 0$ and $\phi_R -\phi_{q,R}\geq 0$, we get
		\begin{align*}
		&\frac{4}{\omega_3R^3} \iiint |\chi_R(y-z) u(t,y)|^2 \Big(|\chi_R(x-z)\nabla(u^\xi(t,x))|^2 \\
		&\quad\quad\quad\quad\quad- \frac{3(p-1)}{2(p+1)} |\chi_R(x-z) u(t,x)|^{p+1} + \frac{3(q-1)}{2(q+1)} |\chi_R(x-z) u(t,x)|^{q+1}\Big) dxdydz \\
		&\leq \frac{d}{dt}\Mca^{\otimes 2}_R(t) - \iint |u(t,y)|^2 \nabla\left(3\phi_R(x-y) + 2(\psi_R-\phi_R)(x-y)\right)\cdot \nabla (|u(t,x)|^2) dxdy \\
		& +\frac{6(p-1)}{p+1} \iint |u(t,y)|^2 (\phi_R-\phi_{p,R})(x-y) |u(t,x)|^{p+1} dxdy \\
		& +\frac{4(p-1)}{p+1} \iint |u(t,y)|^2 (\psi_R-\phi_R)(x-y) |u(t,x)|^{p+1} dxdy.
		\end{align*}
		By \eqref{est-M-R}, we see that
		\begin{align}
		\left| \frac{1}{JT_0} \int_a^{a+T_0} \int_{R_0}^{R_0 e^J} \frac{d}{dt} \Mca^{\otimes 2}_R(t) \frac{dR}{R} dt \right| &\leq \frac{1}{JT_0} \int_{R_0}^{R_0e^J} \sup_{t\in [a,a+T_0]} |\Mca^{\otimes 2}_R(t)| \frac{dR}{R} \nonumber \\
		&\lesssim \frac{1}{JT_0} \int_{R_0}^{R_0 e^J} dR \lesssim \frac{R_0e^J}{JT_0}. \label{est-1}
		\end{align}
		As $|\nabla \phi_R(x)| \lesssim \frac{1}{\eta R}$ and $\sup_{t\in \R}\|u(t)\|_{H^1}<\infty$, we see that
		\begin{align}
		\Big| \frac{1}{JT_0} \int_a^{a+T_0} \int_{R_0}^{R_0e^J} &\iint |u(t,y)|^2 \nabla \phi_R(x-y) \cdot \nabla (|u(t,x)|^2) dx dy \frac{dR}{R} dt \Big| \nonumber \\
		&\lesssim \frac{1}{\eta JT_0} \int_a^{a+T_0} \int_{R_0}^{R_0e^J} \|u(t)\|^3_{L^2} \|\nabla u(t)\|_{L^2} \frac{dR}{R^2} dt \nonumber \\
		&\lesssim \frac{1}{\eta JT_0} \int_a^{a+T_0} \int_{R_0}^{R_0e^J} \frac{dR}{R^2} dt \nonumber \\
		&\lesssim \frac{1}{\eta JR_0}. \label{est-2}
		\end{align}
		Similarly, as $|\nabla(\psi_R-\phi_R)(x)| \lesssim \frac{1}{\eta }\min \left\{\frac{1}{R},\frac{R}{|x|^2} \right\} <\frac{1}{\eta R}$, we have
		\begin{align}
		\Big| \frac{1}{JT_0} \int_a^{a+T_0} \int_{R_0}^{R_0e^J}  \iint |u(t,y)|^2 \nabla (\psi_R-\phi_R)(x-y) \cdot \nabla(|u(t,x)|^2) dx dy \frac{dR}{R} dt\Big| \lesssim \frac{1}{\eta JR_0}. \label{est-3}
		\end{align}
		Using \eqref{proper-psi-3}, we see that
		\begin{align}
		\Big|&\frac{1}{JT_0} \int_a^{a+T_0} \int_{R_0}^{R_0e^J} \iint |u(t,y)|^2 (\psi_R-\phi_R)(x-y) |u(t,x)|^{p+1} dxdy \frac{dR}{R} dt \Big| \nonumber \\
		&\lesssim \frac{1}{\eta JT_0} \int_a^{a+T_0} \int_{R_0}^{R_0e^J} \iint |u(t,y)|^2 \min \left\{ \frac{|x-y|}{R}, \frac{R}{|x-y|}\right\} |u(t,x)|^{p+1} dx dy \frac{dR}{R} dt \nonumber \\
		&\lesssim \frac{1}{\eta JT_0} \int_a^{a+T_0} \iint |u(t,y)|^2 |u(t,x)|^{p+1} \left( \int_{R_0}^{R_0e^J} \min \left\{ \frac{|x-y|}{R}, \frac{R}{|x-y|}\right\} \frac{dR}{R} \right) dx dy dt \nonumber \\
		&\lesssim \frac{1}{\eta J}. \label{est-4}
		\end{align}
		Here we have used the fact that $\sup_{t\in \R} \|u(t)\|_{H^1}<\infty$ and 
		\begin{align*} 
		\int_0^\infty \min \left\{ \frac{|x-y|}{R}, \frac{R}{|x-y|}\right\} \frac{dR}{R} \lesssim 1.
		\end{align*}
		Using \eqref{proper-psi-2}, we have
		\begin{align}
		\Big|\frac{1}{JT_0} \int_a^{a+T_0} \int_{R_0}^{R_0e^J} &\iint |u(t,y)|^2 (\phi_R-\phi_{p,R})(x-y) |u(t,x)|^{p+1} dx dy \frac{dR}{R} dt \Big| \nonumber \\
		&\lesssim \frac{1}{JT_0} \int_a^{a+T_0} \int_{R_0}^{R_0e^J} \eta \frac{dR}{R} dt \lesssim \eta. \label{est-5}
		\end{align}
		
\noindent		By glueing up together \eqref{est-1}, \eqref{est-2}, \eqref{est-3}, \eqref{est-4}, and \eqref{est-5}, we obtain 
		\begin{align}
		\Big|&\frac{1}{JT_0} \int_a^{a+T_0} \int_{R_0}^{R_0e^J} \frac{1}{R^3} \iiint |\chi_R(y-z) u(t,y)|^2 \Big( |\chi_R(x-z)\nabla(u^\xi(t,x))|^2  \nonumber \\
		&\quad  - \frac{3(p-1)}{2(p+1)} |\chi_R(x-z) u(t,x)|^{p+1} + \frac{3(q-1)}{2(q+1)} |\chi_R(x-z) u(t,x)|^{q+1} \Big) dxdydz \frac{dR}{R} dt \Big| \nonumber \\
		&\quad \lesssim \eta +\frac{R_0e^J}{\eta JT_0} + \frac{1}{\eta J} + \frac{1}{\eta JR_0}. \label{est-lhs}
		\end{align}
		Now, for fixed $z, \xi \in \R^3$, we have from \eqref{proper-chi} that
		\[
		\int |\chi_R(x-z) \nabla (u^\xi(t,x))|^2 dx =\int |\nabla[ \chi_R(x-z) u^{\xi}(t,x)]|^2 dx + O(R^{-2} \|u(t)\|^2_{L^2}).
		\]
		From the conservation of mass and \eqref{est-G-u-chi-R} that for $R\geq R_0$ with $R_0$ sufficiently large,
		\begin{align*}
		\int\Big( |\chi_R(x-z) &\nabla (u^{\xi}(t,x))|^2- \frac{3(p-1)}{2(p+1)} |\chi_R(x-z) u(t,x)|^{p+1} \\
		&\quad\quad\quad\quad\quad\quad\,+ \frac{3(q-1)}{2(q+1)} |\chi_R(x-z) u(t,x)|^{q+1} \Big)dx  \\
		&= G(\chi_R(\cdot-z) u^\xi(t)) + O(R^{-2}) \\
		&\geq \delta \|\nabla(\chi_R(\cdot-z) u^{\xi}(t))\|^2_{L^2} + O(R^{-2}).
		\end{align*}
		The term $O(R^{-2})$ can be treated analogously to \eqref{est-2}. We thus infer from \eqref{est-lhs} that
		\begin{align}
		\Big| \frac{1}{JT_0} \int_a^{a+T_0} &\int_{R_0}^{R_0e^J} \frac{1}{R^3} \iiint |\chi_R(y-z) u(t,y)|^2 |\nabla(\chi_R(x-z) u^\xi(t,x))|^2 dxdydz \frac{dR}{R} dt \Big| \nonumber \\
		&\lesssim \eta +\frac{R_0e^J}{\eta JT_0} + \frac{1}{\eta J} + \frac{1}{\eta JR_0}. \label{est-eta}
		\end{align}
		This proves \eqref{inter-mora-est} by taking $\eta=\vareps, J=\vareps^{-3}$, $R_0=\vareps^{-1}$ and $T_0=e^{\vareps^{-3}}$. The proof is complete.
	\end{proof}

	\subsection{Proof of the main result}
	We can now proceed with the proof of the main result. 
	\begin{proof}[Proof of Theorem \ref{theo-scat}]
		The global existence is proved in Lemma \ref{lem-A-omega-plus}. It remains to prove the scattering. We only consider the positive times since the one for negative times is similar. Our purpose is to check the scattering criteria given in Proposition \ref{prop-scat-crite}. To this end, we fix $a \in \R$ and let $\vareps>0$ sufficiently small and $T_0>0$ sufficiently large to be determined later. We will show that there exists $t_0 \in (a,a+T_0)$ such that $[t_0-\vareps^{-\sigma},t_0] \subset (a,a+T_0)$ and
		\begin{align} \label{scat-crite-cond}
		\|u\|_{L^{m_1}_t([t_0-\vareps^{-\sigma},t_0], L^{b_1}_x)} \lesssim \vareps^\mu
		\end{align}
		for some $\sigma, \mu>0$ to be determined later. By \eqref{inter-mora-est}, there exist $T_0 =T_0(\vareps), J=J(\vareps)$, $R_0=R_0(\vareps,u_0)$ and $\eta=\eta(\vareps)$ such that
		\[
		\frac{1}{JT_0} \int_a^{a+T_0} \int_{R_0}^{R_0e^J} \frac{1}{R^3} \iiint |\chi_R(y-z) u(t,y)|^2 |\nabla(\chi_R(x-z) u^{\xi}(t,x))|^2 dxdydz \frac{dR}{R} dt \lesssim \vareps.
		\]
		It follows that there exists $R_1 \in [R_0,R_0e^J]$ such that
		\[
		\frac{1}{T_0} \int_a^{a+T_0} \frac{1}{R_1^3}\iiint |\chi_{R_1}(y-z)u(t,y)|^2 |\nabla(\chi_{R_1}(x-z) u^{\xi}(t,x))|^2 dx dydz dt \lesssim \vareps
		\]
		hence
		\[
		\frac{1}{T_0} \int_a^{a+T_0} \frac{1}{R_1^3} \int \|\chi_{R_1}(\cdot-z) u(t)\|^2_{L^2_x} \|\nabla(\chi_{R_1}(\cdot-z) u^{\xi}(t))\|^2_{L^2_x} dz dt \lesssim \vareps.
		\]
		By the change of variable $z=\frac{R_1}{4}(w+\theta)$ with $w\in \Z^3$ and $\theta \in [0,1]^3$, we deduce that there exists $\theta_1 \in [0,1]^3$ such that
		\begin{align*}
		\frac{1}{T_0} \int_a^{a+T_0} \sum_{w\in \Z^3} \Big\|\chi_{R_1}\left(\cdot-\frac{R_1}{4}(w+\theta_1)\right) u(t)\Big\|^2_{L^2_x} \Big\|\nabla \left( \chi_{R_1}\left((\cdot - \frac{R_1}{4}(w+\theta_1)\right) u^{\xi}(t)\right) \Big\|^2_{L^2_x} dt\\ \lesssim \vareps.
		\end{align*}
		Let $\sigma>0$ to be chosen later. By dividing the interval $\left[a+\frac{T_0}{2}, a+\frac{3T_0}{4}\right]$ into $T_0\vareps^{\sigma}$ intervals of length $\vareps^{-\sigma}$, we infer that there exists $t_0 \in \left[a+\frac{T_0}{2},a+\frac{3T_0}{4}\right]$ such that $[t_0-\vareps^{-\sigma},t_0] \subset (a,a+T_0)$  and
		\[
		\int_{t_0-\vareps^{-\sigma}}^{t_0} \sum_{w\in \Z^3} \Big\|\chi_{R_1} \Big(\cdot -\frac{R_1}{4}(w+\theta_1)\Big)u(t)\Big\|^2_{L^2_x} \Big\| \nabla \left( \chi_{R_1}\Big(\cdot-\frac{R_1}{4}(w+\theta_1)\Big) u^{\xi}(t)\right)\Big\|^2_{L^2_x} dt \lesssim \vareps^{1-\sigma}.
		\]
		This, together with the Gagliardo-Nirenberg inequality
		\[
		\|u\|^4_{L^3_x} \lesssim \|u\|^2_{L^2_x} \|\nabla u^{\xi}\|^2_{L^2_x},
		\]
		implies that
		\begin{align} \label{est-t0-1}
		\int_{t_0 -\vareps^{-\sigma}}^{t_0} \sum_{w\in \Z^3} \Big\| \chi_{R_1} \Big( \cdot-\frac{R_1}{4}(w+\theta_1)\Big) u(t)\Big\|^4_{L^3_x} dt \lesssim \vareps^{1-\sigma}.
		\end{align}
		On the other hand, by H\"older's inequality, Cauchy-Schwarz inequality, and Sobolev embedding, we have
		\begin{align}
		\sum_{w\in \Z^3} &\Big\| \chi_{R_1} \Big( \cdot-\frac{R_1}{4}(w+\theta_1) \Big) u(t)\Big\|^2_{L^3_x} \nonumber  \\
		&\leq \sum_{w\in \Z^3} \Big\|\chi_{R_1}\Big(\cdot-\frac{R_1}{4}(w+\theta_1)\Big) u(t)\Big\|_{L^2_x} \Big\|\chi_{R_1}\Big(\cdot-\frac{R_1}{4}(w+\theta_1)\Big) u(t)\Big\|_{L^6_x} \nonumber \\
		&\leq \Big(\sum_{w\in \Z^3} \Big\|\chi_{R_1}\Big(\cdot-\frac{R_1}{4}(w+\theta_1)\Big) u(t)\Big\|^2_{L^2_x}  \Big)^{1/2} \Big( \sum_{w\in \Z^3} \Big\|\chi_{R_1}\Big(\cdot-\frac{R_1}{4}(w+\theta_1)\Big) u(t)\Big\|^2_{L^6_x}\Big)^{1/2} \nonumber \\
		&\lesssim \|u(t)\|_{L^2_x} \|u(t)\|_{H^1_x} \lesssim 1. \label{est-t0-2}
		\end{align}
		Here we have used the following estimate to get the last line:
		\begin{align*}
		\sum_{w\in \Z^3} &\Big\| \chi_{R_1} \Big( \cdot-\frac{R_1}{4}(w+\theta_1)\Big) u(t)\Big\|^2_{L^6_x} \\
		&\lesssim \sum_{w \in \Z^3} \Big\|\chi_{R_1} \Big(\cdot - \frac{R_1}{4} (w+\theta_1)\Big) \nabla u(t)\Big\|^2_{L^2_x} + \frac{1}{R_1^2} \Big\|(\nabla \chi)_{R_1}\Big(\cdot-\frac{R_1}{4}(w+\theta_1)\Big) u(t)\Big\|^2_{L^2_x} \\
		&\lesssim \|\nabla u(t)\|^2_{L^2_x} + \frac{1}{R_1^2 \eta^2} \|u(t)\|^2_{L^2_x} \lesssim \|u(t)\|^2_{H^1_x}
		\end{align*}
		as $R_1>R_0 = \vareps^{-1}=\eta^{-1}$ (see after \eqref{est-eta}). Note that $|\nabla \chi| \lesssim \eta^{-1}$ by the choice of $\chi$. Combining \eqref{est-t0-1} and \eqref{est-t0-2}, we get from the property of $\chi_{R_1}$, in conjunction with the H\"older and the Cauchy-Schwarz inequalities, that
		\begin{align*}
		\|u\|^3_{L^3_{t,x}([t_0-\vareps^{-\sigma},t_0] \times \R^3)} &\lesssim \int_{t_0-\vareps^{-\sigma}}^{t_0} \sum_{w\in \Z^3} \Big\| \chi_{R_1} \Big( \cdot-\frac{R_1}{4} (w+\theta_1) \Big) u(t)\Big\|^3_{L^3_x} dt \\
		&\lesssim \int_{t_0-\vareps^{-\sigma}}^{t_0} \Big(\sum_{w\in \Z^3} \Big\|\chi_{R_1} \Big(\cdot- \frac{R_1}{4}(w+\theta_1) \Big) u(t)\Big\|^4_{L^3_x} \Big)^{\frac{1}{2}} \\
		&\mathrel{\phantom{\lesssim \int_{t_0-\vareps^{-\sigma}}^{t_0}}} \times \Big( \sum_{w\in \Z^3} \Big\|\chi_{R_1} \Big(\cdot- \frac{R_1}{4}(w+\theta_1) \Big) u(t)\Big\|^2_{L^3_x}\Big)^{\frac{1}{2}} dt  \\
		&\lesssim \Big(\int_{t_0-\vareps^{-\sigma}}^{t_0} \sum_{w\in \Z^3} \Big\| \chi_{R_1} \Big( \cdot- \frac{R_1}{4} (w+\theta_1)\Big) u(t)\Big\|^4_{L^3_x} dt \Big)^{\frac{1}{2}} \\
		&\mathrel{\phantom{\lesssim \int_{t_0-\vareps^{-\sigma}}^{t_0}}} \times \Big(\int_{t_0-\vareps^{-\sigma}}^{t_0} \sum_{w\in \Z^3} \Big\| \chi_{R_1} \Big( \cdot- \frac{R_1}{4} (w+\theta_1)\Big) u(t)\Big\|^2_{L^3_x} dt \Big)^{\frac{1}{2}} \\
		&\lesssim \vareps^{\frac{1-\sigma}{2}} \vareps^{-\frac{\sigma}{2}} = \vareps^{\frac{1}{2} -\sigma},
		\end{align*}
		which implies that
		\begin{align} \label{est-t0}
		\|u\|_{L^3_{t,x}([t_0-\vareps^{-\sigma},t_0] \times \R^3)} \lesssim \vareps^{\frac{1}{3}\left(\frac{1}{2}-\sigma \right)}.
		\end{align}
		Let $\theta \in (0,1)$ to be chosen shortly. We define $(\gamma, \rho)$ by
		\[
		\frac{1}{m_1}=\frac{\theta}{3}+\frac{1-\theta}{\gamma}, \quad \frac{1}{b_1}=\frac{\theta}{3}+\frac{1-\theta}{\rho}. 
		\]
		Pick $\beta, s>0$ such that
		\[
		\frac{1}{\rho}=\frac{1}{\beta}-\frac{s}{3}, \quad \frac{2}{\gamma}+\frac{3}{\beta}=\frac{3}{2}.
		\]
		We readily check that
		\[
		\frac{2}{m_1}+\frac{3}{b_1} =\frac{2}{p-1} = \frac{5\theta}{3}+(1-\theta) \left(\frac{3}{2}-s\right).
		\]
		In particular, 
		\[
		s=\frac{3}{2} -\frac{1}{1-\theta}\left(\frac{2}{p-1} - \frac{5\theta}{3}\right).
		\]
		As $7/3<p<5$, we can take $\theta>0$ sufficiently small so that $0<s<1$. In particular, $(\gamma, \beta)$ is a Schr\"odinger admissible pair. By H\"older's inequality, Sobolev embedding, and \eqref{est-t0}, we have
		\begin{align*}
		\|u\|_{L^{m_1}_t([t_0-\vareps^{-\sigma}, t_0], L^{b_1}_x)} &\leq \|u\|^\theta_{L^3_{t,x}([t_0-\vareps^{-\sigma},t_0]\times \R^3)} \|u\|^{1-\theta}_{L^\gamma_t([t_0-\vareps^{-\sigma},t_0], L^\rho_x)} \\
		&\lesssim \|u\|^\theta_{L^3_{t,x}([t_0-\vareps^{-\sigma},t_0]\times \R^3)} \||\nabla|^su\|^{1-\theta}_{L^\gamma_t([t_0-\vareps^{-\sigma},t_0], L^\beta_x)} \\
		&\lesssim \|u\|^\theta_{L^3_{t,x}([t_0-\vareps^{-\sigma},t_0]\times \R^3)} \|\scal{\nabla}^su\|^{1-\theta}_{L^\gamma_t([t_0-\vareps^{-\sigma},t_0], L^\beta_x)} \\
		&\lesssim \vareps^{\frac{\theta}{3}\left(\frac{1}{2}-\sigma\right)} \vareps^{-\frac{\sigma}{\gamma}(1-\theta)} \\
		&\lesssim \vareps^{\frac{\theta}{6}-\left(\frac{\theta}{3} +\frac{1-\theta}{\gamma}\right) \sigma}.
		\end{align*}
		Here we have used the fact that 
		\[
		\|\scal{\nabla} u\|_{L^\gamma_t(I, L^\beta_x)} \lesssim \scal{I}^{\frac{1}{\gamma}}
		\]
		which follows from the local theory. This proves \eqref{scat-crite-cond} by choosing $\sigma>0$ small enough. The proof is complete.
	\end{proof}

\section{Blow-up results}
\label{S4}
\setcounter{equation}{0}

In this last section, we prove the blow-up rate results as stated in Theorem \ref{prop-blow-rate}.  We start with the following upper bound for the Pohozaev functional for solutions arising from initial data in $\Ac^-_\omega$. The result of the next Lemma is also contained in \cite{BFG, FH}, but we report the short proof for sake of completeness. 

\begin{lemma}  \label{lem-A-omega-minus}
	Let $\frac{7}{3}<p<5$, $1<q<p$, and $\omega>0$. Then $\Ac^-_\omega$ is invariant under the flow of \eqref{NLS}, i.e., if $u_0 \in \Ac^-_\omega$, then $u(t) \in \Ac^-_\omega$ for all $t\in I_{\max}$. 
		In addition, we have
	\begin{align} \label{est-Ac-omega-minus}
	G(u(t)) \leq -\frac{3(p-1)}{2} (m_\omega-S_\omega(u_0)), \quad \forall t\in I_{\max}. 
	\end{align}
	Furthermore, there exists $\delta>0$ small such that 
	\begin{align} \label{est-G-delta}
	G(u(t)) + \delta \|\nabla u(t)\|^2_{L^2} \leq -\frac{3(p-1)}{4} (m_\omega-S_\omega(u_0)), \quad \forall t\in I_{\max}.
	\end{align}
\end{lemma}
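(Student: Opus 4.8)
The plan is to prove the three assertions in order: flow invariance of $\Ac^-_\omega$, then the upper bound \eqref{est-Ac-omega-minus} by a rescaling argument, and finally \eqref{est-G-delta} by combining \eqref{est-Ac-omega-minus} with a pointwise energy identity. Throughout I abbreviate $\|\cdot\|=\|\cdot\|_{L^2}$.

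\emph{Flow invariance and \eqref{est-Ac-omega-minus}.} Since $S_\omega$ is conserved, $S_\omega(u(t))=S_\omega(u_0)<m_\omega$ for all $t\in I_{\max}$. The map $t\mapsto G(u(t))$ is continuous and negative at $t=0$; if it vanished at some $t_*$, then $u(t_*)\neq0$ (mass is conserved and $u_0\neq0$ because $G(u_0)<0$), so $u(t_*)$ would be a nonzero function with $G(u(t_*))=0$ and $S_\omega(u(t_*))<m_\omega$, contradicting the definition of $m_\omega$. Hence $G(u(t))<0$ throughout, i.e. $u(t)\in\Ac^-_\omega$. For \eqref{est-Ac-omega-minus}, fix $t$, set $\phi:=u(t)$, and apply Lemma \ref{lem-lamb-0}: since $G(\phi)<0$ there is $\lambda_0\in(0,1)$ with $G(\phi_{\lambda_0})=0$, whence $S_\omega(\phi_{\lambda_0})\geq m_\omega$. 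Introduce $J_\omega(\phi):=S_\omega(\phi)-\frac{2}{3(p-1)}G(\phi)$; a direct computation shows the $\|\cdot\|_{L^{p+1}}^{p+1}$-terms cancel and
\[
J_\omega(\phi)=\frac{3p-7}{6(p-1)}\|\nabla\phi\|^2+\frac{p-q}{(p-1)(q+1)}\|\phi\|_{L^{q+1}}^{q+1}+\frac{\omega}{2}\|\phi\|^2 .
\]
Since $p>\tfrac73$ and $q>1$, all coefficients are positive and the $\lambda$-exponents appearing in $J_\omega(\phi_\lambda)$ (namely $2$ and $\tfrac{3(q-1)}{2}$) are positive, so $\lambda\mapsto J_\omega(\phi_\lambda)$ is non-decreasing. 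Therefore $J_\omega(\phi)=J_\omega(\phi_1)\geq J_\omega(\phi_{\lambda_0})=S_\omega(\phi_{\lambda_0})\geq m_\omega$, which after inserting $S_\omega(u(t))=S_\omega(u_0)$ rearranges exactly to \eqref{est-Ac-omega-minus}.

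\emph{The refined bound \eqref{est-G-delta}.} The key new input is the pointwise identity
\[
G(\phi)=\frac{3(p-1)}{2}E(\phi)-\frac{3p-7}{4}\|\nabla\phi\|^2-\frac{3(p-q)}{2(q+1)}\|\phi\|_{L^{q+1}}^{q+1},
\]
verified by expanding $E$ and matching coefficients. Evaluating at $\phi=u(t)$, using conservation of $E$ and discarding the non-positive last term, one gets $G(u(t))\leq\frac{3(p-1)}{2}E(u_0)-\frac{3p-7}{4}\|\nabla u(t)\|^2$. I would then interpolate: for a parameter $\theta\in(0,1)$ write $G(u(t))=(1-\theta)G(u(t))+\theta G(u(t))$, estimate the first summand by \eqref{est-Ac-omega-minus} and the second by the last inequality, which gives
\[
G(u(t))+\frac{\theta(3p-7)}{4}\|\nabla u(t)\|^2\leq\frac{3(p-1)}{2}\Big(\theta E(u_0)-(1-\theta)\big(m_\omega-S_\omega(u_0)\big)\Big).
\]
Because $E(u_0)+\big(m_\omega-S_\omega(u_0)\big)=m_\omega-\tfrac{\omega}{2}M(u_0)$, the right-hand side is $\leq-\frac{3(p-1)}{4}(m_\omega-S_\omega(u_0))$ as soon as $\theta\big(m_\omega-\tfrac{\omega}{2}M(u_0)\big)\leq\tfrac12(m_\omega-S_\omega(u_0))$: this is automatic for every $\theta\in(0,1)$ when $m_\omega\leq\tfrac{\omega}{2}M(u_0)$, and holds for all sufficiently small $\theta>0$ otherwise (the right-hand side being strictly positive since $S_\omega(u_0)<m_\omega$). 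Fixing such a $\theta$ and setting $\delta:=\frac{\theta(3p-7)}{4}>0$ yields \eqref{est-G-delta}.

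\emph{Main difficulty.} The only genuinely non-routine point is \eqref{est-G-delta}. One cannot obtain it from a ``$\delta$-perturbed ground state'' built on $S_\omega+\tfrac{\delta}{2}\|\nabla\cdot\|^2$, since that functional is not conserved and $\|\nabla u(t)\|$ is unbounded along a blow-up solution; the gradient correction must instead be produced by trading it against the conserved energy via the identity above and the interpolation parameter $\theta$ (hence $\delta$ is allowed to depend on $u_0$). Everything else is bookkeeping; one should just keep in mind that $u(t)\neq0$ when invoking the variational characterization of $m_\omega$, which follows from conservation of mass.
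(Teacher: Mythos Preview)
Your proof is correct. The invariance argument is identical to the paper's. For \eqref{est-Ac-omega-minus}, your monotonicity of $J_\omega(\phi_\lambda)=S_\omega(\phi_\lambda)-\tfrac{2}{3(p-1)}G(\phi_\lambda)$ along the scaling is the integrated form of the differential inequality $(\lambda f'(\lambda))'\leq\tfrac{3(p-1)}{2}f'(\lambda)$ that the paper derives and integrates on $(\lambda_0,1)$; the two routes are equivalent, yours being slightly more direct since it bypasses the second-derivative computation. For \eqref{est-G-delta} the arguments diverge more: the paper expresses $\|\nabla u\|^2$ through the $q$-based functional $I_\omega=S_\omega-\tfrac{2}{3(q-1)}G$ and drops the sign-definite $L^{p+1}$ and $L^2$ remainders, whereas you interpolate between \eqref{est-Ac-omega-minus} and the energy identity $G=\tfrac{3(p-1)}{2}E-\tfrac{3p-7}{4}\|\nabla\cdot\|^2-\tfrac{3(p-q)}{2(q+1)}\|\cdot\|_{L^{q+1}}^{q+1}$. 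Your version has the advantage that it only invokes $p>\tfrac73$ and $q<p$, so the coefficients keep their signs throughout the full range $1<q<p$ stated in the lemma; the paper's computation, as written, relies on $3q-7>0$ when discarding the remainder terms, which is not guaranteed under the hypotheses.
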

\begin{proof}
	Let $u_0 \in \Ac^-_\omega$. We will show that $u(t) \in \Ac^-_\omega$ for all $t\in I_{\max}$. By the conservation of mass and energy, we have
	\begin{align} \label{S-omega-t}
	S_\omega(u(t)) = S_\omega(u_0)<m_\omega, \quad \forall t\in I_{\max}.
	\end{align}
	Assume by contradiction that there exists $t_0$ such that $G(u(t_0)) \geq 0$. As $u:I_{\max} \to H^1$ is continuous, there exists $t_1$ such that $G(u(t_1))=0$. By the definition of $m_\omega$, we have $S_\omega(u(t_1)) \geq m_\omega$ which contradicts \eqref{S-omega-t}. Thus $G(u(t)) < 0$ for all $t\in I_{\max}$ or $\Ac^-_\omega$ is invariant under the flow of \eqref{NLS}.
	
	For simplicity, we denote $u:=u(t)$ and set $f(\lambda):= S_\omega(u_\lambda)$, where $u_\lambda$ is the scaling \eqref{scaling}. We have
	\[
	f'(\lambda) = \lambda \|\nabla u\|^2_{L^2} -\frac{3(p-1)}{2(p+1)} \lambda^{\frac{3(p-1)}{2}-1} \|u\|^{p+1}_{L^{p+1}} + \frac{3(q-1)}{2(q+1)} \lambda^{\frac{3(q-1)}{2}-1} \|u\|^{q+1}_{L^{q+1}} = \frac{G(u_\lambda)}{\lambda}
	\]
	and
	\[
	\(\lambda f'(\lambda)\)' = 2\lambda \|\nabla u\|^2_{L^2} -\frac{9(p-1)^2}{4(p+1)} \lambda^{\frac{3(p-1)}{2}-1} \|u\|^{p+1}_{L^{p+1}} + \frac{9(q-1)^2}{4(q+1)} \lambda^{\frac{3(q-1)}{2}-1} \|u\|^{q+1}_{L^{q+1}}.
	\]
	We then write	
	\begin{align*}
	\(\lambda f'(\lambda)\)' = \frac{3(p-1)}{2} f'(\lambda) - \frac{3p-7}{2} \lambda \|\nabla u\|^2_{L^2} -\frac{9(q-1)(p-q)}{4(q+1)}\lambda^{\frac{3(q-1)}{2}-1} \|u\|^{q+1}_{L^{q+1}}.
	\end{align*}
	As $\frac{7}{3}<p<5$ and $1<q<p$, we get
	\begin{align} \label{est-Ac-minus}
	\(\lambda f'(\lambda)\)' \leq \frac{3(p-1)}{2} f'(\lambda), \quad \forall \lambda>0.
	\end{align}
	On the other hand, as $G(u)<0$, by Lemma \ref{lem-lamb-0}, there exists $\lambda_0 \in (0,1)$ such that $G(u_{\lambda_0}) =0$. In particular, we have $\lambda_0 f'(\lambda_0)=0$ and $f(\lambda_0) = S_\omega(u_{\lambda_0}) \geq m_\omega$. Integrating \eqref{est-Ac-minus} over $(\lambda_0,1)$, we obtain
	\[
	G(u) \leq \frac{3(p-1)}{2} (S_\omega(u)- S_\omega(u_{\lambda_0})) \leq \frac{3(p-1)}{2} (S_\omega(u)-m_\omega)
	\]
	which is \eqref{est-Ac-omega-minus}. \medskip
	
	Finally, we prove \eqref{est-G-delta}. Observe that
	\begin{align*}
	\|\nabla u\|^2_{L^2} &= \frac{6(q-1)}{3q-7} \(I_\omega(u) - \frac{p-q}{(p+1)(q-1)}\|u\|^{p+1}_{L^{p+1}} -\frac{\omega}{2}\|u\|^2_{L^2} \) \\
	&=\frac{6(q-1)}{3q-7} \(S_\omega(u) - \frac{2}{3(q-1)} G(u) -\frac{p-q}{(p+1)(q-1)}\|u\|^{p+1}_{L^{p+1}} -\frac{\omega}{2}\|u\|^2_{L^2}\).
	\end{align*}
	It follows that
	\begin{align*}
	G(u) + \delta \|\nabla u\|^2_{L^2} &= \(1-\frac{4\delta}{3q-7}\) G(u) + \frac{6\delta (q-1)}{3q-7} S_\omega(u)\\
	& - \frac{6\delta(p-q)}{(3q-7)(p+1)} \|u\|^{p+1}_{L^{p+1}} - \frac{3\delta \omega (q-1)}{3q-7} \|u\|^2_{L^2} \\
	&\leq \(1-\frac{4\delta}{3q-7}\) G(u) + \frac{6\delta (q-1)}{3q-7} S_\omega(u)
	\end{align*}
	as $\omega>0$ and $q<p$. By the energy and mass conservation laws, and \eqref{est-Ac-omega-minus}, we get
	\[
	G(u(t)) + \delta \|\nabla u(t)\|^2_{L^2} \leq - \(1-\frac{4\delta}{3q-7}\) \frac{3(p-1)}{2} (m_\omega -S_\omega(u_0)) + \frac{6\delta(q-1)}{3q-7} S_\omega(u_0),
	\]
$ \forall t\in I_{\max}$.	By taking $\delta>0$ sufficiently small, we obtain \eqref{est-G-delta}. \end{proof}
\medskip

We are now  ready to give the proof of the blow-up results in Theorem \ref{prop-blow-rate}. They are based on virial identities/estimates. We classically introduce a sufficiently smooth and decaying function  $\phi: \R^3 \to \R$. We denote the virial quantity
\[
V_\phi(t):= \int \phi |u(t)|^2 dx.
\] 
The following identities are nowadays standard (see e.g., \cite{Cazenave}):
\[
V'_\phi(t) = 2\ima \int \nabla \phi \cdot \nabla u(t) \overline{u}(t) dx
\]
and
\begin{align*}
V''_\phi(t) &= -\int \Delta^2 \phi |u(t)|^2 dx + 4 \sum_{j,k} \rea \int  \partial^2_{jk} \phi \partial_j u(t) \partial_k \overline{u}(t) dx \\
& +\frac{2(q-1)}{q+1} \int \Delta \phi |u(t)|^{q+1} dx - \frac{2(p-1)}{p+1} \int \Delta \phi |u(t)|^{p+1} dx.
\end{align*}
A first application of the above virial identities is the following virial identity for finite variance solutions. More precisely, if $u_0 \in \Sigma$, then the corresponding solution to \eqref{NLS} satisfies 
\begin{align} \label{viri-iden}
V''_{|x|^2} (t) = 8 G(u(t)), \quad \forall t\in I_{\max}.
\end{align}
Provided $u_0\in\Ac^-_\omega$, by Lemma \ref{lem-A-omega-minus}  $G(u(t))\lesssim-1$ for any $t\in I_{\max}$.  Then \eqref{viri-iden} implies finite time blow-up for solutions in $\Sigma$ via a convexity argument. \medskip

 Another applications are virial estimates for radial and cylindrical solutions \cite{BFG}. To state these estimates, we let $\theta: [0,\infty) \to [0,2]$ a smooth function satisfying
\begin{align} \label{def-theta}
\theta(r)= \left\{
\begin{array}{ccl}
2 &\text{if} &0\leq r\leq 1, \\
0 &\text{if} & r\geq 2.
\end{array}
\right.
\end{align}
We define the function $\vartheta: [0,\infty) \to [0,\infty)$ by
\[
\vartheta(r):= \int_0^r \int_0^s \theta(\tau) d\tau ds.
\]
For $\varrho>0$, we define the radial function $\phi_\varrho: \R^3 \to \R$ by
\begin{align} \label{phi-R-rad}
\phi_\varrho(x) = \phi_\varrho(r) := \varrho^2 \vartheta(r/\varrho), \quad r=|x|.
\end{align}
Then the following virial estimate for radial solutions: If $u_0 \in H^1$ is radial, then the corresponding solution to \eqref{NLS} satisfies
\begin{align} \label{viri-est-rad}
V''_{\phi_\varrho}(t) \leq 8 G(u(t)) + C\varrho^{-2} + C\varrho^{-(p-1)}  \|\nabla u(t)\|^{\frac{p-1}{2}}_{L^2}, \quad \forall t\in I_{\max}.
\end{align}
To state virial estimates for cylindrical solutions, we define, for $\varrho>0$, the function
\begin{align} \label{phi-R-cyl}
\phi_\varrho(x):= \varrho^2 \vartheta(r/\varrho) + x_3^2, \quad r:= |\overline{x}|, \quad \overline{x}=(x_1,x_2).
\end{align}
If $u_0 \in \Sigma_3$, then the corresponding solution to \eqref{NLS} satisfies
\begin{align} \label{viri-est-cyl}
V''_{\phi_\varrho}(t) \leq 8 G(u(t)) + C\varrho^{-2} + C\varrho^{-\frac{p-1}{2}}  \|\nabla u(t)\|^{p-1}_{L^2}, \quad \forall t\in I_{\max}.
\end{align}
We refer the readers to \cite{BFG} for a proof of \eqref{viri-est-rad} and \eqref{viri-est-cyl}. We are now able to prove our blow-up result for \eqref{NLS}. 

\begin{proof}[Proof of Proposition \ref{prop-blow-rate}]
	Let us start with the point \text{(i)}. The proof is based on an idea of Merle, Rapha\"el, and Szeftel \cite{MRS}. \medskip
	
\noindent 	(1) Let us consider the radial case. Using
	\[
	G(u) = \frac{3(p-1)}{2} E(u) -\frac{3p-7}{4} \|\nabla u\|^2_{L^2} - \frac{3(p-q)}{2(q+1)} \|u\|^{q+1}_{L^{q+1}},
	\] 
	we infer from \eqref{viri-est-rad} that
	\begin{align*}
	V''_{\phi_\varrho}(t) \leq 12 (p-1) E(u(t)) &-2(3p-7)\|\nabla u(t)\|^2_{L^2} - \frac{12(p-q)}{q+1} \|u(t)\|^{q+1}_{L^{q+1}} \\
	&+ C\varrho^{-2} + C\varrho^{-(p-1)} \|\nabla u(t)\|^{\frac{p-1}{2}}_{L^2}, \quad \forall t\in I_{\max},
	\end{align*}
	where $\phi_\varrho$ is as in \eqref{phi-R-rad}. As $p<5$, by Young's inequality, we have for any $\vareps>0$,
	\[
	V''_{\phi_\varrho}(t) \leq 12 (p-1) E(u(t)) -2(3p-7)\|\nabla u(t)\|^2_{L^2} + C\varrho^{-2} + \vareps \|\nabla u(t)\|^2_{L^2} + C \vareps^{-\frac{p-1}{5-p}} \varrho^{-\frac{4(p-1)}{5-p}}, 
	\]
$ \forall t\in I_{\max}$.	Taking $\vareps=3p-7$, we get
	\[
	V''_{\phi_\varrho}(t) \leq 12 (p-1) E(u(t)) -(3p-7)\|\nabla u(t)\|^2_{L^2} + C\varrho^{-2} + C \varrho^{-\frac{4(p-1)}{5-p}},
	\]
$ \forall t\in I_{\max}$.	By the conservation of energy and $2<\frac{4(p-1)}{5-p}$, we obtain
	\begin{align} \label{est-blow-rate-1}
	(3p-7) \|\nabla u(t)\|^2_{L^2} + V''_{\phi_\varrho}(t) \leq C \varrho^{-\frac{4(p-1)}{5-p}}, 
	\end{align}
	$ \forall t\in I_{\max} $ provided that $\varrho>0$ is taken sufficiently small.  Let $0<t_0<t<T^*$. We integrate \eqref{est-blow-rate-1} twice on $(t_0,t)$ and get
	\begin{align*}
	(3p-7)\int_{t_0}^t \int_{t_0}^s \|\nabla u(\tau)\|^2_{L^2} d\tau ds + V_{\phi_\varrho}(t) &\leq C \varrho^{-\frac{4(p-1)}{5-p}} (t-t_0)^2 + (t-t_0) V'_{\phi_\varrho}(t_0) + V_{\phi_\varrho}(t_0) \\
	&\leq C\varrho^{-\frac{4(p-1)}{5-p}} (t-t_0)^2 + C \varrho(t-t_0) \|\nabla u(t_0)\|_{L^2}\\
	& + C\varrho^2,
	\end{align*}
	where we have used the conservation of mass and
	\begin{align*}
	V_{\phi_\varrho}(t_0)&\leq C\varrho^2 \|u(t_0)\|^2_{L^2} \leq C \varrho^2, \\
	 V'_{\phi_\varrho}(t_0)& \leq C \varrho\|\nabla u(t_0)\|_{L^2} \|u(t_0)\|_{L^2} \leq C \varrho \|\nabla u(t_0)\|_{L^2}.
	\end{align*}
	Note that the constant $C>0$ may vary from line to line. By Fubini's Theorem, we have
	\[
	\int_{t_0}^t \int_{t_0}^s \|\nabla u(\tau)\|^2_{L^2} d\tau ds = \int_{t_0}^t \left(\int_{\tau}^t ds\right) \|\nabla u(\tau)\|^2_{L^2} d\tau = \int_{t_0}^t (t-\tau) \|\nabla u(\tau)\|^2_{L^2} d\tau.
	\]
	As $V_{\phi_\varrho}$ is non-negative, we get
	\[
	\int_{t_0}^t (t-\tau) \|\nabla u(\tau)\|^2_{L^2} d\tau \leq C\varrho^{-\frac{4(p-1)}{5-p}} (t-t_0)^2 + C \varrho(t-t_0) \|\nabla u(t_0)\|_{L^2} + C\varrho^2.
	\]
	Letting $t\nearrow T^*$, we obtain
	\[
	\int_{t_0}^{T^*} (T^*-\tau) \|\nabla u(\tau)\|^2_{L^2} d\tau \leq C\varrho^{-\frac{4(p-1)}{5-p}} (T^*-t_0)^2 + C \varrho(T^*-t_0) \|\nabla u(t_0)\|_{L^2} + C\varrho^2.
	\]
	Optimizing in $\varrho$ by choosing $\varrho^{-\frac{4(p-1)}{5-p}} (T^*-t_0)^2 = \varrho^2$ or equivalently $\varrho = (T^*-t_0)^{\frac{5-p}{p+3}}$, we have
		\begin{align*}
		\int_{t_0}^{T^*} (T^*-\tau) \|\nabla u(\tau)\|^2_{L^2} d\tau &\leq C (T^*-t_0)^{\frac{2(5-p)}{p+3}} + C(T^*-t_0)^{\frac{5-p}{p+3}+1} \|\nabla u(t_0)\|_{L^2} \\
		&\leq C(T^*-t_0)^{\frac{2(5-p)}{p+3}} + C(T^*-t_0)^2\|\nabla u(t_0)\|^2_{L^2}, 
		\end{align*}
for any $0<t_0<T^*$.		Now set 
		\begin{align} \label{5-g}
		g(t):= \int_{t}^{T^*} (T^*-\tau) \|\nabla u(\tau)\|^2_{L^2} d\tau.
		\end{align}
		We have
		\[
		g(t) \leq C(T^*-t)^{\frac{2(5-p)}{p+3}} - (T^*-t)g'(t), \quad \forall 0<t<T^*
		\]
		which is rewritten as
		\[
		\(\frac{g(t)}{T^*-t}\)' =\frac{1}{(T^*-t)^2} (g(t) + (T^*-t) g'(t)) \leq \frac{C}{(T^*-t)^{\frac{4(p-1)}{p+3}}}.
		\]
		Integrating it over $(0,t)$, we obtain
		\[
		\frac{g(t)}{T^*-t} \leq \frac{g(0)}{T^*} + \frac{C}{(T^*-t)^{\frac{3p-7}{p+3}}} -\frac{C}{(T^*)^{\frac{3p-7}{p+3}}}
		\]
		which yields
		\[
		\frac{g(t)}{T^*-t} \leq \frac{C}{(T^*-t)^{\frac{3p-7}{p+3}}} \quad \text{for } t \text{ close to } T^*.
		\]
		In particular, we have $g(t) \leq C(T^*-t)^{\frac{2(5-p)}{p+3}}$ which is \eqref{est-blow}. 
		
		To see \eqref{blow-rate}, we rewrite \eqref{est-blow} as
		\begin{align} \label{est-g}
		\frac{1}{T^*-t} \int_t^{T^*} (T^*-\tau) \|\nabla u(\tau)\|^2_{L^2} d\tau \leq \frac{C}{(T^*-t)^{\frac{3p-7}{p+3}}}.
		\end{align}
		Take $T_n \nearrow T^*$. For a fixed $n$, $g$ defined in \eqref{5-g} is a continuous function on $[T_n,T^*]$ and differentiable on $(T_n,T^*)$. By the mean value theorem, there exists $t_n \in (T_n,T^*)$ such that
		\begin{align*}
		-(T^*-t_n)\|\nabla u(t_n)\|^2_{L^2}=g'(t_n) = \frac{g(T^*)-g(T_n)}{T^*-T_n} =-\frac{\displaystyle{\int}_{T_n}^{T^*} (T^*-\tau)\|\nabla u(\tau)\|^2_{L^2} d\tau}{T^*-T_n}.
		\end{align*}
		Using \eqref{est-g}, we have
		\[
		(T^*-t_n) \|\nabla u(t_n)\|^2_{L^2} \leq \frac{C}{(T^*-T_n)^{\frac{3p-7}{p+3}}} \leq \frac{C}{(T^*-t_n)^{\frac{3p-7}{p+3}}} 
		\]
		hence
		\[
		\|\nabla u(t_n)\|^2_{L^2} \leq \frac{C}{(T^*-t_n)^{\frac{4(p-1)}{p+3}}}.
		\]
		This proves \eqref{blow-rate}. \medskip
		
	\noindent	(2) We now consider the cylindrical case. By \eqref{viri-est-cyl}, we have
		\begin{align*}
		V''_{\phi_\varrho}(t) &\leq 8G(u(t)) + C\varrho^{-2} + C\varrho^{-\frac{p-1}{2}} \|\nabla u(t)\|^{p-1}_{L^2} \\
		&\leq 12 (p-1) E(u(t)) - 2(3p-7) \|\nabla u(t)\|^2_{L^2} -\frac{12(p-q)}{q+1} \|u(t)\|^{q+1}_{L^{q+1}} \\ 
		&\quad \mathrel{\phantom{12 (p-1) E(u(t)) - 2(3p-7) \|\nabla u(t)\|^2_{L^2}}}+ C\varrho^{-2} + C\varrho^{-\frac{p-1}{2}} \|\nabla u(t)\|^{p-1}_{L^2}, \quad \forall t\in I_{\max},
		\end{align*}
		where $\phi_\varrho$ is as in \eqref{phi-R-cyl}. By Young's inequality with $p<3$ and $q<p$, we have 
		\begin{align*}
		V''_{\phi_\varrho}(t) &\leq 12(p-1) E(u(t)) - 2(3p-7) \|\nabla u(t)\|^2_{L^2} + C\varrho^{-2} + \vareps \|\nabla u(t)\|^2_{L^2} + C\vareps^{-\frac{p-1}{3-p}} \varrho^{-\frac{p-1}{3-p}} \\
		&\leq 12(p-1)E(u(t)) - (3p-7)\|\nabla u(t)\|^2_{L^2} + C\varrho^{-2} + C\varrho^{-\frac{p-1}{3-p}}, \quad \forall t\in I_{\max},
		\end{align*}
		where we have chosen $\vareps=3p-7$ to get the second line. The energy conservation and $2<\frac{p-1}{3-p}$ yield
		\[
		(3p-7)\|\nabla u(t)\|^2_{L^2} + V''_{\phi_\varrho}(t) \leq C\varrho^{-\frac{p-1}{3-p}}, \quad \forall t\in I_{\max}
		\] 
		provided that $\varrho>0$ is taken sufficiently small. By the same reasoning as above, we prove \eqref{est-blow} and \eqref{blow-rate}. \medskip

\noindent Point \text{(ii)}. 	The result is a straightforward application of the estimate in Lemma \ref{lem-A-omega-minus} yielding $G(u(t))\lesssim-1$ uniformly in time in the maximal time of existence, and the Du, Wu, and Zhang scheme \cite{DWZ}. Indeed, with respect to the NLS equation with one focusing nonlinearity, the extra defocusing term  accounts for negative contributions in the virial estimates. Hence by repeating the argument in \cite{DWZ} jointly with the uniform negative upper bound for $G$,  the proof is complete. 
\end{proof}

\begin{ackno}\rm
The authors would like to thank the anonymous referee for suggesting the reference \cite{AKN} and for valuable comments on a previous version of the paper.\\
\noindent J.B. is partially supported by project PRIN 2020XB3EFL by the Italian Ministry of Universities and Research and by  the University of Pisa, Project PRA 2022\_11. V.D.D. is supported by the European Union's Horizon 2020 Research and Innovation Programme (Grant
agreement CORFRONMAT No. 758620, PI: Nicolas Rougerie).
\end{ackno}


\appendix

\section{Small data theory}

In this appendix, we first recall some useful tools such as dispersive and Strichartz estimates. We then prove small data global existence and small data scattering results related to \eqref{cNLS}. Let us start by reporting the well-known 3D dispersive estimate, see \cite{Cazenave} for a proof.
\begin{lemma}
	We have, for all $r\in [2,\infty]$ and for any $t\neq0$,
	\begin{align} \label{dis-est}
		\|e^{it\Delta} f\|_{L^r_x} \lesssim |t|^{-\frac{3}{2}\(1-\frac{2}{r}\)} \|f\|_{L^{r'}_x}
	\end{align}
	for any $f\in L^{r'}$.
\end{lemma}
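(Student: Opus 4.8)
The plan is to prove the two endpoint cases $r=2$ and $r=\infty$ and then obtain the full range $r\in[2,\infty]$ by interpolation. First I would record the $L^2$-estimate: with the sign convention of \eqref{NLS}, the propagator $e^{it\Delta}$ acts on the Fourier side as multiplication by $e^{-it|\xi|^2}$, a function of modulus $1$, so by Plancherel $\|e^{it\Delta}f\|_{L^2_x}=\|f\|_{L^2_x}$ for every $t\in\R$ and every $f\in L^2$. This is precisely \eqref{dis-est} in the case $r=2$, where the time-weight exponent $-\tfrac32(1-\tfrac2r)$ vanishes.

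Next I would treat the case $r=\infty$ via the explicit kernel. For $t\neq 0$ one has $e^{it\Delta}f=K_t\ast f$ with $K_t(x)=(4\pi i t)^{-3/2}e^{i|x|^2/(4t)}$, so that $|K_t(x)|=(4\pi|t|)^{-3/2}$ pointwise; Young's convolution inequality then yields $\|e^{it\Delta}f\|_{L^\infty_x}\le(4\pi|t|)^{-3/2}\|f\|_{L^1_x}$, which is \eqref{dis-est} with $r=\infty$. The one point that needs a small amount of care — and is really the only delicate step — is justifying the kernel identity itself, since $e^{-it|\xi|^2}\notin L^1_\xi$: this is handled by a standard limiting argument, e.g. replacing $t$ by $t-i\vareps$ with $\vareps\downarrow0$ so that the Gaussian $e^{-(\vareps+it)|\xi|^2}$ becomes genuinely integrable, computing the Gaussian integral, and passing to the limit in the sense of tempered distributions (equivalently, one first proves the identity and the bound for Schwartz $f$ and extends to general $f\in L^1$ by density).

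Finally, fix $t\neq0$ and $r\in(2,\infty)$ and set $\theta:=1-\tfrac2r\in(0,1)$. Then $\tfrac1r=\tfrac{1-\theta}{2}+\tfrac{\theta}{\infty}$ and $\tfrac1{r'}=\tfrac{1-\theta}{2}+\tfrac{\theta}{1}$, so applying the Riesz--Thorin interpolation theorem to the linear map $e^{it\Delta}$, which is bounded $L^2\to L^2$ with norm $1$ and $L^1\to L^\infty$ with norm $(4\pi|t|)^{-3/2}$, gives
$$\|e^{it\Delta}f\|_{L^r_x}\ \le\ 1^{\,1-\theta}\,\big((4\pi|t|)^{-3/2}\big)^{\theta}\,\|f\|_{L^{r'}_x}\ =\ (4\pi|t|)^{-\frac32\left(1-\frac2r\right)}\|f\|_{L^{r'}_x},$$
which is \eqref{dis-est}. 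I do not expect any genuine obstacle in this argument — it is the classical dispersive estimate — beyond the density/limiting justification of the kernel formula noted above and the (necessary) exclusion of $t=0$, where the bound fails for $r\neq2$.
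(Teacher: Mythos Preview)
Your argument is correct and is exactly the classical proof: Plancherel for $r=2$, the explicit Gaussian kernel for $r=\infty$, and Riesz--Thorin for the intermediate range. The paper does not give its own proof of this lemma but simply refers to \cite{Cazenave}, where precisely this argument appears; so there is nothing to compare.
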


The next ones are the well-known Strichartz estimates, arising from the dispersive estimate above. See \cite{Cazenave, KT}.
\begin{proposition} \label{prop-str-est}
	The following space-time bounds hold true.
	
	\begin{itemize}
		\item (Homogeneous Strichartz estimates)  For any $f\in L^2$ and any Schr\"odinger admissible pair $(a,b)$, i.e., 
		\[
		\frac{2}{a}+\frac{3}{b}=\frac{3}{2}, \quad b \in [2,6],
		\]
		then 
		\[
		\|e^{it\Delta} f\|_{L^a_t(\R,L^b_x)} \lesssim \|f\|_{L^2_x}.
		\]
		\item (Inhomogeneous Strichartz estimates) Let $I\subset \R$ be an interval containing 0. Then 
		\[
		\left\| \int_0^t e^{i(t-s)\Delta} F(s) ds\right\|_{L^{a}_t(I,L^b_x)} \lesssim \|F\|_{L^{\rho'}_t(I,L^{\gamma'}_x)}
		\]
		for any $F \in L^{\rho'}_t(I, L^{\gamma'}_x)$ and any Schr\"odinger admissible pairs $(a,b)$ and $(\rho,\gamma)$.
		\item (Strichartz estimates for non-admissible pairs) Let $I\subset \R$ be an interval containing 0 and $(a,b)$ be a Schr\"odinger admissible pair with $b>2$. Fix $m>\frac{a}{2}$ and define $n$ by
		\[
		\frac{1}{m} +\frac{1}{n} =\frac{2}{a}.
		\]
		Then 
		\[
		\left\| \int_0^t e^{i(t-s)\Delta} F(s) ds\right\|_{L^m_t(I, L^b_x)} \lesssim \|F\|_{L^{n'}_t(I,L^{b'}_x)}
		\]
		for any $F \in L^{n'}_t(I,L^{b'}_x)$.
	\end{itemize}
\end{proposition}

\noindent 	As in \eqref{expo-p-1} and \eqref{expo-p-2}, we introduce the following exponents:
\begin{align*}
	a_2:= \frac{4(q+1)}{3(q-1)}, \quad m_2:= \frac{2(q-1)(q+1)}{5-q},  \quad n_2:= \frac{2(q-1)(q+1)}{3q^2-5q-2},
\end{align*}
and 
\begin{align*}
	b_2:=q+1, \quad r_1:=\frac{6(q-1)(q+1)}{3q^2+2q-13}.
\end{align*}
We see that $(a_2, b_2), (m_2,r_2)$ are Schr\"odinger admissible pairs and
\[
\frac{1}{m_2} +\frac{1}{n_2} =\frac{2}{a_2},\quad \frac{1}{b_2} = \frac{1}{r_2} - \frac{\sigma_2}{3}, \quad \sigma_2:=\frac{3q-7}{2(q-1)}.
\]

The following Lemma follows directly from the above choices, H\"older's inequality, and Sobolev embeddings.
\begin{lemma} \label{lem-non-est}
	Let $I\subset \R$ be an interval. We have
	\begin{align*}
		\||u|^{p-1} u\|_{L^{n'_1}_t(I, L^{b'_1}_x)} &\lesssim \|u\|^p_{L^{m_1}_t(I, L^{b_1}_x)}, \\
		\||u|^{q-1} u\|_{L^{n'_2}_t(I, L^{b'_2}_x)} &\lesssim \|u\|^q_{L^{m_2}_t(I, L^{b_2}_x)}, \\
		\|\scal{\nabla}(|u|^{p-1} u)\|_{L^{a'_1}_t(I, L^{b'_1}_x)} &\lesssim \|u\|^{p-1}_{L^{m_1}_t(I, L^{b_1}_x)} \|\scal{\nabla} u\|_{L^{a_1}_t(I, L^{b_1}_x)}, \\
		\|\scal{\nabla}(|u|^{q-1} u)\|_{L^{a'_2}_t(I, L^{b'_2}_x)} &\lesssim \|u\|^{q-1}_{L^{m_2}_t(I, L^{b_2}_x)} \|\scal{\nabla} u\|_{L^{a_2}_t(I, L^{b_2}_x)}, \\
		\|u\|_{L^{m_1}_t(I, L^{b_1}_x)} &\lesssim \||\nabla|^{\sigma_1} u\|_{L^{m_1}_t(I, L^{r_1}_x)} \lesssim \|\scal{\nabla} u\|_{L^{m_1}_t(I, L^{r_1}_x)}, \\
		\|u\|_{L^{m_2}_t(I, L^{b_2}_x)} &\lesssim \||\nabla|^{\sigma_2} u\|_{L^{m_2}_t(I, L^{r_2}_x)}\lesssim \|\scal{\nabla} u\|_{L^{m_2}_t(I, L^{r_2}_x)}.
	\end{align*}
\end{lemma}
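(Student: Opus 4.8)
The plan is to reduce each of the six inequalities to a handful of elementary facts: the pointwise identities $\big||u|^{p-1}u\big|=|u|^{p}$ and $\big|\nabla(|u|^{p-1}u)\big|\lesssim|u|^{p-1}|\nabla u|$ (the latter being the classical chain rule, available since $\tfrac73<q<p<5$ forces $p,q>2$, so $z\mapsto|z|^{p-1}z$ is $C^1$), H\"older's inequality applied first in $x$ and then in $t$, the Sobolev embedding, and the $L^b$-boundedness ($1<b<\infty$) of the relevant Bessel and Riesz multipliers. The only real content is to verify that the exponents fixed just before the statement make the bookkeeping close, which is precisely why they were defined that way.

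\emph{Pure power terms.} For the first line, at each fixed time $\||u|^{p-1}u\|_{L^{b'_1}_x}=\|u\|_{L^{pb'_1}_x}^{p}=\|u\|_{L^{b_1}_x}^{p}$ because $pb'_1=p+1=b_1$; taking the $L^{n'_1}_t$ norm gives $\||u|^{p-1}u\|_{L^{n'_1}_tL^{b'_1}_x}=\|u\|_{L^{pn'_1}_tL^{b_1}_x}^{p}$, and a short computation yields $pn'_1=m_1$ (equivalently $\tfrac1{n'_1}=\tfrac{p(5-p)}{2(p-1)(p+1)}$). The second line is identical with $q$ in place of $p$, using $\tfrac73<q<5$.

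\emph{Terms with $\scal{\nabla}$.} For the third line I would use the norm equivalence $\|\scal{\nabla}f\|_{L^{b}}\approx\|f\|_{L^{b}}+\|\nabla f\|_{L^{b}}$, the chain rule, and H\"older in $x$ with $\tfrac1{b'_1}=\tfrac{p-1}{b_1}+\tfrac1{b_1}$, to obtain at fixed $t$
\[
\|\scal{\nabla}(|u|^{p-1}u)\|_{L^{b'_1}_x}\lesssim\|u\|_{L^{b_1}_x}^{p}+\big\||u|^{p-1}|\nabla u|\big\|_{L^{b'_1}_x}\lesssim\|u\|_{L^{b_1}_x}^{p-1}\big(\|u\|_{L^{b_1}_x}+\|\nabla u\|_{L^{b_1}_x}\big)\lesssim\|u\|_{L^{b_1}_x}^{p-1}\|\scal{\nabla}u\|_{L^{b_1}_x},
\]
the last step being the multiplier equivalence again. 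Then H\"older in $t$ with $\tfrac1{a'_1}=\tfrac{p-1}{m_1}+\tfrac1{a_1}$, i.e.\ the identity $\tfrac{p-1}{m_1}+\tfrac2{a_1}=1$ (immediate from $\tfrac2{a_1}=\tfrac{3(p-1)}{2(p+1)}$, which is the admissibility of $(a_1,b_1)$ together with $b_1=p+1$, and $\tfrac{p-1}{m_1}=\tfrac{5-p}{2(p+1)}$), gives the claimed bound; the fourth line is the same with $q$.

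\emph{Sobolev lines.} For the last two lines, the first inequality is Sobolev embedding in $x$, $\|u\|_{L^{b_1}_x}\lesssim\||\nabla|^{\sigma_1}u\|_{L^{r_1}_x}$, legitimate because $\tfrac1{b_1}=\tfrac1{r_1}-\tfrac{\sigma_1}{3}$ and $0<\sigma_1<1$ (here $\sigma_1>0\Leftrightarrow p>\tfrac73$ and $\sigma_1<1\Leftrightarrow p<5$), followed by taking the $L^{m_1}_t$ norm; the second inequality follows from $\||\nabla|^{\sigma_1}u\|_{L^{r_1}_x}\lesssim\|\scal{\nabla}u\|_{L^{r_1}_x}$, since $|\nabla|^{\sigma_1}\scal{\nabla}^{-1}$ is an $L^{r_1}$-bounded Fourier multiplier. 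The $q$-versions follow by the obvious substitution, using $\tfrac73<q$. I do not foresee a genuine obstacle: the lemma is of bookkeeping type, and the only point deserving care is that $p,q>2$ makes the classical chain rule applicable (one could otherwise invoke the fractional chain rule of Christ and Weinstein, but it is not needed in this range of exponents).
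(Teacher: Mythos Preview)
Your proof is correct and follows exactly the approach sketched in the paper, which merely states that the lemma ``follows directly from the above choices, H\"older's inequality, and Sobolev embeddings.'' You have simply supplied the exponent bookkeeping (e.g., $pb'_1=b_1$, $pn'_1=m_1$, $\tfrac1{a'_1}=\tfrac{p-1}{m_1}+\tfrac1{a_1}$) and the multiplier/chain-rule justifications that the paper leaves implicit.
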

We next prove a global existence result for small data. 	
\begin{lemma} \label{lem-small-gwp}
	Let $\frac{7}{3}<q<p<5$ and $T>0$ be such that $u(T)\in H^1$. Then there exists $\delta>0$ sufficiently small such that if
	\[
	\|e^{i(t-T)\Delta} u(T)\|_{L^{m_1}_t([T,\infty), L^{b_1}_x) \cap L^{m_2}_t([T,\infty), L^{b_2}_x)} <\delta,
	\]
	then there exists a unique solution to \eqref{NLS} with initial datum $u(T)$ satisfying
	\[
	\|u\|_{L^{m_1}_t([T,\infty), L^{b_1}_x) \cap L^{m_2}_t([T,\infty), L^{b_2}_x)} \leq 2\|e^{i(t-T)\Delta} u(T)\|_{L^{m_1}_t([T,\infty), L^{b_1}_x) \cap L^{m_2}_t([T,\infty), L^{b_2}_x)} 
	\]
	and
	\[
	\|\scal{\nabla} u\|_{L^{a_1}_t([T,\infty), L^{b_1}_x) \cap L^{a_2}([T,\infty), L^{b_2}_x)} \leq C\|u(T)\|_{H^1_x}
	\]
	for some constant $C>0$. 
\end{lemma}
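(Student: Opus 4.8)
The plan is to run a standard fixed-point argument for the Duhamel formulation on the forward time interval $[T,\infty)$, controlling the solution in the two "scattering" spaces $S_j := L^{m_1}_t L^{b_1}_x \cap L^{m_2}_t L^{b_2}_x$ and the two "energy" spaces $X := L^{a_1}_t L^{b_1}_x \cap L^{a_2}_t L^{b_2}_x$ (all norms over $[T,\infty)$), using the nonlinear estimates of Lemma~\ref{lem-non-est} together with the homogeneous, inhomogeneous, and non-admissible Strichartz estimates of the preceding Proposition. First I would set up the map
\[
\Phi(u)(t) := e^{i(t-T)\Delta} u(T) - i\int_T^t e^{i(t-s)\Delta}\bigl(|u|^{q-1}u - |u|^{p-1}u\bigr)(s)\,ds,
\]
and seek a fixed point in the complete metric space
\[
B := \Bigl\{ u : \|u\|_{S} \le 2\|e^{i(t-T)\Delta}u(T)\|_{S},\ \|\scal{\nabla}u\|_{X} \le C\|u(T)\|_{H^1_x}\Bigr\},
\]
equipped with the $\|\cdot\|_{S}$-metric (using the weakest norm for contraction is the usual device so that $B$ is closed).

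The key steps, in order: (i) Estimate $\|\Phi(u)\|_{S}$. Apply the Strichartz estimate for non-admissible pairs with the pair $(m_1,r_1)$ (legitimate since $m_1 > a_1/2$ and $1/m_1+1/n_1 = 2/a_1$) and the last two lines of Lemma~\ref{lem-non-est} to bound $\|\int_T^t e^{i(t-s)\Delta}|u|^{p-1}u\,ds\|_{L^{m_1}_t L^{b_1}_x}$ by $\|\scal{\nabla}(|u|^{p-1}u)\|_{L^{a'_1}_t L^{b'_1}_x}$, then by $\|u\|_S^{p-1}\|\scal{\nabla}u\|_X$; symmetrically for the $q$-term with $(m_2,r_2)$. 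Combined with the homogeneous estimate for the linear part, this gives
\[
\|\Phi(u)\|_{S} \le \|e^{i(t-T)\Delta}u(T)\|_{S} + C\bigl(\|u\|_S^{p-1} + \|u\|_S^{q-1}\bigr)\|\scal{\nabla}u\|_X.
\]
(ii) Estimate $\|\scal{\nabla}\Phi(u)\|_{X}$. Use the inhomogeneous Strichartz estimate with the admissible pairs $(a_1,b_1)$ and $(a_2,b_2)$, then lines 3--4 of Lemma~\ref{lem-non-est}, and the embeddings of lines 5--6 to pass from $\|u\|_{L^{m_j}_t L^{b_j}_x}$ to $\|\scal{\nabla}u\|_X$ (since $(m_j,r_j)$ is admissible, $\|\scal{\nabla}u\|_{L^{m_j}_t L^{r_j}_x} \lesssim \|u(T)\|_{H^1_x} + \text{nonlinear}$, but more directly one controls $\|u\|_{S}$ and closes on $X$); this yields $\|\scal{\nabla}\Phi(u)\|_X \le C\|u(T)\|_{H^1_x} + C(\|u\|_S^{p-1}+\|u\|_S^{q-1})\|\scal{\nabla}u\|_X$. (iii) Choose $\delta$ so small that on $B$ the nonlinear contributions are absorbed: since $\|u\|_S \le 2\delta$, the factor $C((2\delta)^{p-1}+(2\delta)^{q-1}) \le \tfrac12$, which closes both bounds and shows $\Phi(B)\subseteq B$. (iv) Contraction: a parallel computation for $\Phi(u)-\Phi(v)$, using the pointwise inequality $\bigl||w|^{r-1}w - |z|^{r-1}z\bigr| \lesssim (|w|^{r-1}+|z|^{r-1})|w-z|$ and Hölder, gives $\|\Phi(u)-\Phi(v)\|_S \le \tfrac12\|u-v\|_S$ on $B$. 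Banach fixed point then produces the unique solution, and its $H^1$-persistence and the stated bounds follow by construction; uniqueness in the full solution class follows from a standard continuity/bootstrap argument on subintervals.

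The main obstacle I expect is bookkeeping rather than conceptual: one must verify carefully that all the exponent pairs introduced before Lemma~\ref{lem-non-est} are indeed Schrödinger-admissible and satisfy the non-admissible relation $1/m_j + 1/n_j = 2/a_j$ with $m_j > a_j/2$, and that the Sobolev-embedding exponent relations $1/b_j = 1/r_j - \sigma_j/3$ with $\sigma_j = (3q-7)/2(q-1) \in (0,1)$ hold in the range $7/3 < q < p < 5$ — this is exactly where the lower bound $q > 7/3$ is used (it guarantees $\sigma_j > 0$, i.e.\ the gradient derivative is genuinely "spent" on the embedding) and the upper bound $p<5$ keeps everything energy-subcritical. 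A secondary subtlety is that the contraction must be run in the weakest norm $\|\cdot\|_S$ (not in $X$) so that $B$ is a complete metric space and the differences of the $\scal{\nabla}$-terms need not be estimated; the $X$-bound for the fixed point is then recovered a posteriori from step (ii). Everything else is a routine application of the estimates already assembled in this section.
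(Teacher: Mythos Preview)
Your proposal follows the same fixed-point strategy as the paper: a contraction argument for the Duhamel map on a ball where both the ``scattering'' norm $S=L^{m_1}_tL^{b_1}_x\cap L^{m_2}_tL^{b_2}_x$ and the energy norm $X=L^{a_1}_tL^{b_1}_x\cap L^{a_2}_tL^{b_2}_x$ (with $\langle\nabla\rangle$) are controlled. Two points of divergence are worth flagging.

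First, a terminological slip: the pair $(m_j,r_j)$ is Schr\"odinger-\emph{admissible}, so what you are really doing in step~(i) is Sobolev embedding $\|f\|_{L^{b_j}_x}\lesssim\|\langle\nabla\rangle f\|_{L^{r_j}_x}$ followed by the ordinary inhomogeneous Strichartz estimate, not the non-admissible one. The paper additionally uses the genuinely non-admissible estimate (output $(m_j,b_j)$, input $(n'_j,b'_j)$) to produce pure-power terms $M^p$, $M^q$ in the $S$-bound, but your bound $(M^{p-1}+M^{q-1})L$ closes equally well.

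Second, and more substantively, your choice of contraction metric differs from the paper's and creates a small gap. You equip $B$ with the $S$-norm distance; the paper uses $d(u,v)=\|u-v\|_X$. The paper's choice is cleaner: with admissible output $(a_j,b_j)$ and dual input $(a'_j,b'_j)$, H\"older gives exactly $\frac{1}{a'_j}=\frac{p-1}{m_j}+\frac{1}{a_j}$ (and the analogue for $q$), so the contraction reads $d(\Phi(u),\Phi(v))\lesssim(M^{p-1}+M^{q-1})\,d(u,v)$ with no derivative on $u-v$. In your step~(iv), a ``parallel computation'' to step~(i) would route the difference through $\langle\nabla\rangle$ and the Sobolev embedding to reach $L^{m_j}_tL^{b_j}_x$, which places $\langle\nabla\rangle$ on $u-v$---a quantity not controlled by $\|u-v\|_S$. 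The cross terms (e.g.\ the $q$-difference in the $(m_1,b_1)$ output) do not obviously close in $S$ via the non-admissible estimate alone either. The remedy is simply to take the distance to be $\|\cdot\|_X$ as the paper does; nothing else in your outline changes, and your remark about completeness of $(B,d)$ via persistence of regularity applies verbatim.
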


\begin{proof}
	We consider
	\[
	X_T:= \left\{ u : \|u\|_{L^{m_1}_t(I,L^{b_1}_x) \cap L^{m_2}_t(I,L^{b_2}_x)} \leq M, \ \|\scal{\nabla} u\|_{L^{a_1}_t(I,L^{b_1}_x) \cap L^{a_2}_t(I,L^{b_2}_x)} \leq L\right\}
	\]
	equipped with the distance
	\[
	d(u,v) := \|u-v\|_{L^{a_1}_t(I,L^{b_1}_x) \cap L^{a_2}_t(I,L^{b_2}_x)},
	\]
	where $I=[T,\infty)$ and $M,L>0$ will be chosen later. By the persistence of regularity (see e.g., \cite[Theorem 1.2.5]{Cazenave}), we readily see that $(X_T,d)$ is a complete metric space. Our purpose is to show that the Duhamel functional
	\begin{align} \label{duha-form}
		\Phi_T(u(t)) := e^{i(t-T)\Delta} u(T) + i\int_T^t e^{i(t-s)\Delta} \(|u(s)|^{p-1}u(s) - |u(s)|^{q-1} u(s)\) ds
	\end{align}
	is a contraction on $(X_T, d)$. By Strichartz estimates and Lemma \ref{lem-non-est}, we have
	\begin{align*}
		\|\Phi_T(u)\|_{L^{m_1}_t(I,L^{b_1}_x)} &\leq \|e^{i(t-T)\Delta} u(T)\|_{L^{m_1}_t(I,L^{b_1}_x)} + \left\| \int_T^t e^{i(t-s)\Delta} |u(s)|^{p-1} u(s) ds \right\|_{L^{m_1}_t(I,L^{b_1}_x)} \\
		& + \left\| \int_T^t e^{i(t-s)\Delta} |u(s)|^{q-1} u(s) ds \right\|_{L^{m_1}_t(I,L^{b_1}_x)} \\
		&\leq  \|e^{i(t-T)\Delta} u(T)\|_{L^{m_1}_t(I,L^{b_1}_x)} + C\||u|^{p-1} u\|_{L^{n'_1}_t(I,L^{b'_1}_x)} \\
		& + \left\| \scal{\nabla} \int_T^t e^{i(t-s)\Delta} |u(s)|^{q-1} u(s)ds\right\|_{L^{m_1}_t(I,L^{r_1}_x)} \\
		&\leq  \|e^{i(t-T)\Delta} u(T)\|_{L^{m_1}_t(I,L^{b_1}_x)} + C\||u|^{p-1} u\|_{L^{n'_1}_t(I,L^{b'_1}_x)} \\
		&+ C\|\scal{\nabla}(|u|^{q-1} u)\|_{L^{a'_2}_t(I,L^{b'_2}_x)} \\
		&\leq  \|e^{i(t-T)\Delta} u(T)\|_{L^{m_1}_t(I,L^{b_1}_x)} + C\|u\|^p_{L^{m_1}_t(I,L^{b_1}_x)} \\
		&+ \|u\|^{q-1}_{L^{m_2}_t(I,L^{b_2}_x)} \|\scal{\nabla} u\|_{L^{a_2}_t(I,L^{b_2}_x)}. 
	\end{align*}
	Similarly, we have
	\[
	\begin{aligned}
		\|\Phi_T(u)\|_{L^{m_2}_t(I,L^{b_2}_x)} &\leq \|e^{i(t-T)\Delta} u(T)\|_{L^{m_2}_t(I,L^{b_2}_x)}+ C\|u\|^{p-1}_{L^{m_1}_t(I,L^{b_1}_x)} \|\scal{\nabla}u\|_{L^{a_2}_t(I,L^{b_2}_x)}\\
		&  + C\|u\|^q_{L^{m_2}_t(I,L^{b_2}_x)}.
	\end{aligned}
	\]
	We next estimate
	\begin{align*}
		\|\scal{\nabla}\Phi_T(u)\|_{L^{a_1}_t(I,L^{b_1}_x)} &\leq C\|u(T)\|_{H^1_x} + C\|\scal{\nabla}(|u|^{p-1}u)\|_{L^{a'_1}_t(I,L^{b'_1}_x)} \\
		&+ C\|\scal{\nabla}(|u|^{q-1} u)\|_{L^{a'_2}_t(I,L^{b'_2}_x)} \\
		&\leq C\|u(T)\|_{H^1_x} + C\|u\|^{p-1}_{L^{m_1}_t(I,L^{b_1}_x)} \|\scal{\nabla} u\|_{L^{a_1}_t(I,L^{b_1}_x)} \\
		&+ C \|u\|^{q-1}_{L^{m_2}_t(I,L^{b_2}_x)} \|\scal{\nabla} u\|_{L^{a_2}_t(I,L^{b_2}_x)}
	\end{align*}
	and
	\[
	\begin{aligned}
		\|\scal{\nabla}\Phi_T(u)\|_{L^{a_2}_t(I,L^{b_2}_x)}& \leq  C\|u(T)\|_{H^1_x} + C\|u\|^{p-1}_{L^{m_1}_t(I,L^{b_1}_x)} \|\scal{\nabla} u\|_{L^{a_1}_t(I,L^{b_1}_x)}\\
		& + C \|u\|^{q-1}_{L^{m_2}_t(I,L^{b_2}_x)} \|\scal{\nabla} u\|_{L^{a_2}_t(I,L^{b_2}_x)}.
	\end{aligned}
	\]
	Next we have
	\begin{align*}
		\|\Phi_T(u)-\Phi_T(v)\|_{L^{a_1}_t(I,L^{b_1}_x) \cap L^{a_2}_t(I,L^{b_2}_x)} &\leq C\||u|^{p-1}u-|v|^{p-1} v\|_{L^{n'_1}_t(I,L^{b'_1}_x)}\\
		& + C\||u|^{q-1}u-|v|^{q-1}v\|_{L^{n'_2}_t(I,L^{b'_2}_x)} \\
		&\leq \Big(\|u\|^{p-1}_{L^{m_1}_t(I,L^{b_1}_x)} + \|v\|^{p-1}_{L^{m_1}_t(I,L^{b_1}_x)} \Big)\|u-v\|_{L^{a_1}_t(I,L^{b_1}_x)} \\
		&+ \Big(\|u\|^{q-1}_{L^{m_2}_t(I,L^{b_2}_x)} + \|v\|^{q-1}_{L^{m_2}_t(I,L^{b_2}_x)} \Big)\|u-v\|_{L^{a_2}_t(I,L^{b_2}_x)}.
	\end{align*}
	Thus there exists $C>0$ independent of $T$ such that for any $u,v\in X_T$, we have
	\begin{align*}
		\|\Phi_T(u)\|_{L^{m_1}_t(I,L^{b_1}_x) \cap L^{m_2}_t(I,L^{b_2}_x)} &\leq \|e^{i(t-T)\Delta}u(T)\|_{L^{m_1}_t(I,L^{b_1}_x) \cap L^{m_2}_t(I,L^{b_2}_x)} + C (M^q+M^p) \\
		&+ C (M^{q-1}+M^{p-1}) L, \\
		\|\scal{\nabla}\Phi_T(u)\|_{L^{a_1}_t(I,L^{b_1}_x) \cap L^{a_2}_t(I,L^{b_2}_x)} &\leq C\|u(T)\|_{H^1_x} + C(M^{q-1}+M^{p-1}) L, 
	\end{align*}
	and
	\[
	d(\Phi_T(u),\Phi_T(v))\leq C(M^{q-1}+M^{p-1}) d(u,v).
	\]
	
	\noindent By choosing $M=2\|e^{i(t-T)\Delta} u(T)\|_{L^{m_1}_t(I,L^{b_1}_x)\cap L^{m_2}_t(I,L^{b_2}_x)}$ and $L=2C\|u(T)\|_{H^1_x}$ and taking $M$ sufficiently small, we see that $\Phi_T$ is a contraction on $(X_T,d)$. This completes the proof.
\end{proof}
The next Lemma is a small data scattering result. 	
\begin{lemma} \label{lem-small-scat}
	Let $\frac{7}{3}<q<p<5$. Suppose that $u(t)$ is a global solution to \eqref{NLS} satisfying $\|u\|_{L^\infty_t(\R, H^1_x)} <\infty$. Then there exists $\delta>0$ sufficiently small such that if 
	\[
	\|e^{i(t-T)\Delta} u(T)\|_{L^{m_1}_t([T,\infty), L^{b_1}_x)} <\delta
	\]
	for some $T>0$, then $u$ scatters forward in time.
\end{lemma}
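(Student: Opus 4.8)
The plan is to upgrade the space-time bounds coming from Lemma \ref{lem-small-gwp} into genuine scattering, following the now-standard Cazenave-type argument. Since $u$ is global with $\|u\|_{L^\infty_t(\R,H^1_x)}<\infty$, the smallness hypothesis $\|e^{i(t-T)\Delta}u(T)\|_{L^{m_1}_t([T,\infty),L^{b_1}_x)}<\delta$ will first be leveraged to deduce the corresponding smallness of the \emph{full} nonlinear solution on $[T,\infty)$. The one subtlety is that Lemma \ref{lem-small-gwp} requires smallness in the intersection space $L^{m_1}_tL^{b_1}_x\cap L^{m_2}_tL^{b_2}_x$, whereas here we only assume smallness of the $L^{m_1}_tL^{b_1}_x$-component. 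To bridge this, I would use the fixed $H^1$-bound together with Strichartz and the embeddings in Lemma \ref{lem-non-est}: a standard continuity/bootstrap argument on subintervals, splitting $[T,\infty)$ into finitely many pieces on which the relevant Strichartz norms are controlled by $\|u\|_{L^\infty_tH^1_x}$, shows that $u\in L^{m_1}_t([T,\infty),L^{b_1}_x)\cap L^{m_2}_t([T,\infty),L^{b_2}_x)$ with $\|\scal{\nabla}u\|_{L^{a_1}_t([T,\infty),L^{b_1}_x)\cap L^{a_2}_t([T,\infty),L^{b_2}_x)}<\infty$; after possibly enlarging $T$ the $L^{m_2}_tL^{b_2}_x$-norm on $[T,\infty)$ is itself small, so Lemma \ref{lem-small-gwp} applies on $[T,\infty)$ and yields finite global Strichartz norms there.

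Once the finiteness of these space-time norms on $[T,\infty)$ is in hand, the scattering statement is extracted by the Duhamel formula. Define
\[
u_0^+ := u(T) + i\int_T^\infty e^{-i(s-T)\Delta}\(|u(s)|^{p-1}u(s)-|u(s)|^{q-1}u(s)\)ds,
\]
which I claim is a well-defined element of $H^1$: indeed, by Strichartz (including the non-admissible endpoint version) and the nonlinear estimates of Lemma \ref{lem-non-est}, the $H^1$-norm of the integral is bounded by $C\big(\|u\|_{L^{m_1}_t([T,\infty),L^{b_1}_x)}^{p-1}+\|u\|_{L^{m_2}_t([T,\infty),L^{b_2}_x)}^{q-1}\big)\|\scal{\nabla}u\|_{L^{a_1}_t([T,\infty),L^{b_1}_x)\cap L^{a_2}_t([T,\infty),L^{b_2}_x)}<\infty$. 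Then
\[
e^{i(t-T)\Delta}u(T) \;=\; u(t) - i\int_T^t e^{i(t-s)\Delta}\(|u|^{p-1}u-|u|^{q-1}u\)ds,
\]
so that $e^{-it\Delta}u(t) - e^{-iT\Delta}u_0^+ = -\,i\,e^{-iT\Delta}\int_t^\infty e^{-i(s-T)\Delta}(\cdots)\,ds$, and the same estimate applied on $[t,\infty)$ shows the right-hand side tends to $0$ in $H^1$ as $t\to\infty$, since the tails of the (finite) space-time norms vanish by dominated convergence. Relabeling $u_0^+$ to absorb $e^{-iT\Delta}$ gives $\|u(t)-e^{it\Delta}u_0^+\|_{H^1}\to0$.

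I expect the main obstacle to be the first step: passing from smallness in a single Strichartz component to applicability of the small-data lemma in the intersection space. The clean way around it is to observe that \emph{global finiteness} (not smallness) of $\|u\|_{L^{m_2}_t([T,\infty),L^{b_2}_x)}$ already follows from the uniform $H^1$-bound by partitioning the time interval — on each subinterval the $L^{a_2}_tL^{b_2}_x$ norm of $\scal{\nabla}u$ is finite by local theory and Strichartz — and then, given finiteness, one can choose $T'\geq T$ so large that both $\|e^{i(t-T')\Delta}u(T')\|_{L^{m_1}_t([T',\infty),L^{b_1}_x)}$ and $\|u\|_{L^{m_2}_t([T',\infty),L^{b_2}_x)}$ are small; applying Lemma \ref{lem-small-gwp} with uniqueness on $[T',\infty)$ then propagates back to $[T,\infty)$. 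Everything else is a routine combination of Strichartz estimates and Lemma \ref{lem-non-est}, and the cubic-type fractional chain rule is not needed since all nonlinear estimates have already been recorded.
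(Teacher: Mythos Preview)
Your scattering argument in the second paragraph (Duhamel, nonlinear estimates from Lemma~\ref{lem-non-est}, Cauchy sequence in $H^1$) is fine and matches the paper's. The gap is in the first step.

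You claim that ``global finiteness (not smallness) of $\|u\|_{L^{m_2}_t([T,\infty),L^{b_2}_x)}$ already follows from the uniform $H^1$-bound by partitioning the time interval.'' This is false. Partitioning $[T,\infty)=\bigcup_j I_j$ into intervals of length $\tau(A)$ gives, by local theory, $\|u\|_{L^{m_2}_t(I_j,L^{b_2}_x)}\leq C(A)$ on each piece, but there are infinitely many pieces and $m_2<\infty$, so the sum $\sum_j\|u\|_{L^{m_2}_t(I_j,L^{b_2}_x)}^{m_2}$ diverges. If the $H^1$ bound alone implied finite global Strichartz norms, every $H^1$-bounded global solution would scatter---which solitons contradict. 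Nor does a direct bootstrap close: in the Duhamel inequality for $\|u\|_{L^{m_2}_t([T,T_1],L^{b_2}_x)}$ the linear contribution $\|e^{i(t-T)\Delta}u(T)\|_{L^{m_2}_tL^{b_2}_x}$ is only bounded by $\|u(T)\|_{H^1}$, not small, so the nonlinear term $C\|u\|_{L^{m_2}_tL^{b_2}_x}^q$ cannot be absorbed.

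The paper supplies exactly the missing ingredient: an interpolation inequality
\[
\|f\|_{L^{m_2}_t(I,L^{b_2}_x)}\leq \|f\|_{L^{m_1}_t(I,L^{b_1}_x)}^{\theta}\|f\|_{L^{\rho}_t(I,L^{\gamma}_x)}^{1-\theta},
\]
where one checks that for $\tfrac{7}{3}<q<p<5$ the exponent $\theta=\frac{(3q-7)(p-1)}{(3p-7)(q-1)}\in(0,1)$ and, crucially, the pair $(\rho,\gamma)$ is Schr\"odinger admissible. Applied to $f=e^{i(t-T)\Delta}u(T)$ on $[T,\infty)$, the first factor is $<\delta^\theta$ by hypothesis and the second is $\lesssim\|u(T)\|_{H^1}^{1-\theta}$ by Strichartz. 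Thus smallness of the linear evolution in the full intersection $L^{m_1}_tL^{b_1}_x\cap L^{m_2}_tL^{b_2}_x$ follows immediately, and Lemma~\ref{lem-small-gwp} applies on $[T,\infty)$ without any enlarging of $T$ or partitioning. Once you have this interpolation, your second paragraph finishes the proof.
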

\begin{proof}
	We first observe that for any interval $I\subset \R$,
	\begin{align*}
		\|u\|_{L^{m_2}_t(I, L^{b_2}_x)} \leq \|u\|^\theta_{L^{m_1}_t(I,L^{b_1}_x)} \|u\|^{1-\theta}_{L^{\rho}_t(I,L^{\gamma}_x)},
	\end{align*}
	where
	\[
	\theta=\frac{(3q-7)(p-1)}{(3p-7)(q-1)}, \quad \rho=\frac{(1-\theta) m_1 m_2}{m_1-\theta m_2}, \quad \gamma=\frac{(1-\theta) b_1 b_2}{b_1-\theta b_2}. 
	\]
	We readily check that for $\frac{7}{3}<q<p<5$,
	\[
	\theta \in (0,1), \quad \frac{2}{\rho}+\frac{3}{\gamma}=\frac{3}{2}, \quad \gamma \in [2,6],
	\]
	namely $(\rho,\gamma)$ is a Schr\"odinger admissible pair. It follows that
	\begin{align*}
		\|e^{i(t-T)\Delta} u(T)\|_{L^{m_2}_t([T,\infty), L^{b_2}_x)} &\leq \|e^{i(t-T)\Delta} u(T)\|_{L^{m_1}_t([T,\infty), L^{b_1}_x)}^{\theta} \|e^{i(t-T)\Delta} u(T)\|^{1-\theta}_{L^{\rho}_t([T,\infty), L^{\gamma}_x)} \\
		&\leq \delta^\theta \|u(T)\|_{H^1_x}^{1-\theta}.
	\end{align*}
	Thus
	\[
	\|e^{i(t-T)\Delta} u(T)\|_{L^{m_1}_t([T,\infty), L^{b_1}_x) \cap L^{m_2}_t([T,\infty), L^{b_2}_x)} <\vareps(\delta)
	\]
	for some $\vareps(\delta)>0$ small depending on $\delta$. By Lemma \ref{lem-small-gwp}, we have
	\begin{align} \label{est-small-1}
		\|u\|_{L^{m_1}_t([T,\infty), L^{b_1}_x) \cap L^{m_2}_t([T,\infty), L^{b_2}_x)} \leq 2\|e^{i(t-T)\Delta} u(T)\|_{L^{m_1}_t([T,\infty), L^{b_1}_x) \cap L^{m_2}_t([T,\infty), L^{b_2}_x)} 
	\end{align}
	and
	\begin{align} \label{est-small-2}
		\|\scal{\nabla} u\|_{L^{a_1}_t([T,\infty), L^{b_1}_x) \cap L^{a_2}([T,\infty), L^{b_2}_x)} \leq C\|u(T)\|_{H^1_x}
	\end{align}
	for some constant $C>0$. \\
\medskip
\noindent	For $T<t_1<t_2$, we use the Duhamel formula \eqref{duha-form}, \eqref{est-small-1}, and \eqref{est-small-2} to have
	\begin{align*}
		\|e^{-it_2\Delta} u(t_2)-e^{-it_1\Delta} u(t_1)\|_{H^1_x} &\leq C\|\scal{\nabla} (|u|^{p-1}u)\|_{L^{a'_1}_t((t_1,t_2), L^{b'_1}_x)}\\
		& + C\|\scal{\nabla} (|u|^{q-1}u)\|_{L^{a'_2}_t((t_1,t_2), L^{b'_2}_x)} \\
		&\leq C\|u\|^{p-1}_{L^{m_1}_t((t_1,t_2), L^{b_1}_x)} \|\scal{\nabla} u\|_{L^{a_1}_t((t_1,t_2), L^{b_1}_x)} \\
		& + C \|u\|^{q-1}_{L^{m_2}_t((t_1,t_2), L^{b_2}_x)} \|\scal{\nabla} u\|_{L^{a_2}_t((t_1,t_2), L^{b_2}_x)} \to 0
	\end{align*}
	as  $t_1, t_2 \to \infty$.	This shows that $\{e^{-it\Delta} u(t)\}_{t\to \infty}$ is a Cauchy sequence in $H^1_x$. Thus there exists 
	\[
	u_+ = e^{-iT\Delta} u(T) + i \int_T^\infty e^{-is\Delta} \(|u(s)|^{p-1}u(s)-|u(s)|^{q-1}u(s)\) ds \in H^1_x
	\] 
	such that $e^{-it\Delta} u(t) \to u_+$ strongly in $H^1_x$ as $t\to \infty$. By the unitary property of the propagator, we obtain		
	\[
	\|u(t)-e^{it\Delta} u_+\|_{H^1_x} \to 0 \text{ as } t\to \infty.
	\]
	The proof is complete.
\end{proof}


\end{document}